\newtheorem{theorem}{Theorem}
\newtheorem{remark}[theorem]{Remark}
\newtheorem{lemma}[theorem]{Lemma}
\newtheorem{proposition}[theorem]{Proposition}
\newtheorem{definition}[theorem]{Definition}
\newtheorem{example}[theorem]{Example}
\DeclareMathOperator*{\divergenz}{div}              %
\DeclareMathOperator*{\supp}{supp}         %
\DeclareMathOperator*{\essinf}{ess ~inf}         %
\DeclareMathOperator*{\esssup}{ess ~sup}         %
\newcommand{\Ss}{\textnormal{(S}_+\textnormal{)}}
\DeclareMathOperator*{\diam}{diam}
\newcommand{\N}{\mathbb{N}}
\newcommand{\R}{\mathbb{R}}
\newcommand{\RN}{\mathbb{R}^N}
\newcommand{\Lp}[1]{L^{#1}(\Omega)}
\newcommand{\Lprand}[1]{L^{#1}(\partial\Omega)}
\newcommand{\Wp}[1]{W^{1,#1}(\Omega)}
\newcommand{\Wpzero}[1]{W^{1,#1}_0(\Omega)}
\newcommand{\lan}{\langle}
\newcommand{\ran}{\rangle}
\newcommand{\eps}{\varepsilon}
\newcommand{\ph}{\varphi}
\newcommand{\into}{\int_{\Omega}}
\newcommand{\weak}{\rightharpoonup}
\newcommand{\Linf}{L^{\infty}(\Omega)}
\newcommand{\close}{\overline{\Omega}}
\newcommand{\cprime}{$'$}
\renewcommand{\l}{\left}
\renewcommand{\r}{\right}
\numberwithin{theorem}{section}
\numberwithin{equation}{section}
\newcommand*\diff{\mathop{}\!\mathrm{d}}
\newcommand{\abs}[1]{\left\lvert #1 \right\rvert}
\newcommand{\norm}[1]{\left\lVert #1 \right\rVert}
\newcommand{\W}{W^{1, \mathcal{H}}_0(\Omega)}
\title[A new class of double phase variable exponent problems]{A new class of double phase variable exponent problems: Existence and uniqueness}
\author[\'{A}.\,Crespo-Blanco]{\'{A}ngel Crespo-Blanco}
\address[\'{A}.\,Crespo-Blanco]{Technische Universit\"{a}t Berlin, Institut f\"{u}r Mathematik, Stra\ss e des 17.\,Juni 136, 10623 Berlin, Germany}
\email{crespo@math.tu-berlin.de}
\author[L.\,Gasi\'nski]{Leszek Gasi\'nski}
\address[L.\,Gasi\'nski]{Pedagogical University of Cracow, Department of Mathematics, Podchorazych 2, 30-084 Cracow, Poland}
\email{leszek.gasinski@up.krakow.pl}
\author[P.\,Harjulehto]{Petteri Harjulehto}
\address[P.\,Harjulehto]{Department of Mathematics and Statistics, FI-20014 University of Turku, Finland}
\email{petteri.harjulehto@utu.fi}
\author[P.\,Winkert]{Patrick Winkert}
\address[P.\,Winkert]{Technische Universit\"{a}t Berlin, Institut f\"{u}r 
Mathematik, Stra\ss e des 17.\,Juni 136, 10623 Berlin, Germany}
\email{winkert@math.tu-berlin.de}
\subjclass{35J15, 35J62, 35P30, 47B92, 47H05}
\keywords{Density of smooth functions, double phase operator with variable exponent, convection term, Musielak-Orlicz Sobolev space, existence results, uniqueness}
\begin{document}

\begin{abstract}
	In this paper we introduce a new class of quasilinear elliptic equations driven by the so-called double phase operator with variable exponents. We prove certain properties of the corresponding Musielak-Orlicz Sobolev spaces (an equivalent norm, uniform convexity, Radon-Riesz property with respect to the modular) and the properties of the new double phase operator (continuity, strict monotonicity, $\Ss$-property). In contrast to the known constant exponent case we are able to weaken the assumptions on the data. Finally we show the existence and uniqueness of corresponding elliptic equations with right-hand sides that have gradient dependence (so-called convection terms) under very general assumptions on the data. As a result of independent interest, we also show the density of smooth functions in the new Musielak-Orlicz Sobolev space even when the domain is unbounded.
\end{abstract}
	
\maketitle
	
%*********************************************************************
%*********************************************************************
\section{Introduction}%\label{section_1}
%*********************************************************************
%*********************************************************************

Given a bounded domain $\Omega \subseteq \R^N$, $N \geq 2$, with Lipschitz boundary $\partial \Omega$, this paper is concerned with a new double phase operator with variable exponents given by
\begin{align}\label{double-phase-variable}
	\divergenz\left(|\nabla u|^{p(x)-2}\nabla u+\mu(x) |\nabla u|^{q(x)-2}\nabla u\right)
\end{align}
with $p,q \in C(\close)$ such that $1<p(x)<N$, $p(x)<q(x)$ for all $x \in 
\close$ and $0\leq \mu(\cdot) \in \Lp{1}$. This operator is the natural extension of the classical double phase operator when $p$ and $q$ are constants, namely 
\begin{align}\label{double-phase}
	\divergenz\Big(|\nabla u|^{p-2}\nabla u+\mu(x) |\nabla u|^{q-2}\nabla u\Big).
\end{align}
It is clear that when  $\inf_{\close} \mu>0$ or $\mu\equiv 0$, then the operator in \eqref{double-phase-variable} becomes the weighted $(q(x),p(x))$-Laplacian or the $p(x)$-Laplacian, respectively. The energy functional $I\colon\W\to \R$ related to the double phase 
operator \eqref{double-phase-variable} is given by
\begin{align}\label{integral_minimizer}
	I(u)=\into \l( \frac{|\nabla u|^{p(x)}}{p(x)} 
	+ \mu(x) \frac{|\nabla u|^{q(x)}}{q(x)}\r)\diff x,
\end{align}
where the integrand $H(x,\xi)=\frac{1}{p(x)}|\xi|^{p(x)}+\frac{\mu(x)}{q(x)}|\xi|^{q(x)}$ for all $(x,\xi) \in \Omega\times \R^N$ of $I$ has unbalanced growth if $0\leq \mu(\cdot) \in \Lp{\infty}$, that is,
\begin{align*}
	b_1|\xi|^{p(x)} \leq H(x,\xi) \leq b_2 \l(1+|\xi|^{q(x)}\r)\quad \text{for a.\,a.\,}x\in\Omega \text{ and for all }\xi\in\R^N \text{ with }b_1,b_2>0.
\end{align*}

The main characteristic of the functional $I$ is the change of ellipticity on the set where the weight function is zero, that is, on the set $\{x\in \Omega: \mu(x)=0\}$. Indeed, the energy density of $I$ exhibits ellipticity in the gradient of order $q(x)$ in the set $\{x\in\Omega\,:\,\mu(x)>\eps\}$ for any fixed $\eps>0$ and of order $p(x)$ on the points $x$ where $\mu(x)$ vanishes. So the integrand $H$ switches between two different phases of elliptic behaviours. This is the reason why it is called double phase. 

Zhikov \cite{Zhikov-1986} was the first who studied functionals whose integrands change their ellipticity according to a point in order to provide 
models for strongly anisotropic materials. Functionals of the form \eqref{integral_minimizer} have been studied by several authors with respect to 
regularity of local minimizers (also for nonstandard growth). We refer to 
the works of Baroni-Colombo-Mingione \cite{Baroni-Colombo-Mingione-2015,Baroni-Colombo-Mingione-2016,Baroni-Colombo-Mingione-2018}, Baroni-Kuusi-Mingione \cite{Baroni-Kuusi-Mingione-2015}, Byun-Oh \cite{Byun-Oh-2020}, Colombo-Mingione \cite{Colombo-Mingione-2015a,Colombo-Mingione-2015b}, De Filippis \cite{De-Filippis-2018}, De Filippis-Palatucci \cite{De-Filippis-Palatucci-2019}, Harjulehto-H\"{a}st\"{o}-Toivanen \cite{Harjulehto-Hasto-Toivanen-2017}, 
Marcellini \cite{Marcellini-1991,Marcellini-1989b}, Ok \cite{Ok-2018,Ok-2020}, Ragusa-Tachikawa \cite{Ragusa-Tachikawa-2016,Ragusa-Tachikawa-2020} 
and the references therein. Moreover, recent results for nonuniformly elliptic variational problems and nonautonomous functionals can be found in the papers of Beck-Mingione \cite{Beck-Mingione-2020,Beck-Mingione-2019}, 
De Filippis-Mingione \cite{De-Filippis-Mingione-2020} and H\"{a}st\"{o}-Ok \cite{Hasto-Ok-2019}.

In general, double phase differential operators and corresponding energy functionals given in \eqref{double-phase}, \eqref{double-phase-variable} and \eqref{integral_minimizer}, respectively, appear in several physical applications. For example, in the elasticity theory, the modulating coefficient $\mu(\cdot)$ dictates the geometry of composites made of two different materials with distinct power hardening exponents $q(x)$ and $p(x)$, 
see Zhikov \cite{Zhikov-2011}. We also refer to other applications which can be found in the works of
Bahrouni-R\u{a}dulescu-Repov\v{s} \cite{Bahrouni-Radulescu-Repovs-2019} on transonic flows, Benci-D'Avenia-Fortunato-Pisani \cite{Benci-DAvenia-Fortunato-Pisani-2000} on quantum physics and  Cherfils-Il\cprime yasov \cite{Cherfils-Ilyasov-2005} on reaction diffusion systems.

In this paper we study first the corresponding function space related to the given double phase operator with variable exponents given in \eqref{double-phase-variable}. This leads to Musielak-Orlicz Sobolev spaces which 
turn out to be reflexive Banach spaces. Under the condition that the weight function $\mu(\cdot)$ is bounded we also show that
\begin{align*}
	\inf \l \{\lambda>0 \, : \, \into \l[\l(\frac{|\nabla u|}{\lambda}\r)^{p(x)}+\mu(x)\l(\frac{|\nabla u|}{\lambda}\r)^{q(x)}\r] \diff x \leq 1 \r\}
\end{align*} 
is an equivalent norm in $\W$ under the additional assumption that
\begin{equation*}%\label{condition_general_equivalent_norm}
	q(x) < p^*(x) \quad \text{for all } x \in \close.
\end{equation*}
This condition (taking constant exponents) is weaker than the usual one for the constant exponent double phase setting, namely $\mu(\cdot)$ is Lipschitz continuous and
\begin{align}\label{condition_equivalent_norm2}
	\frac{q}{p}<1+\frac{1}{N},
\end{align}
see Colasuonno-Squassina \cite[Proposition 2.18(iv)]{Colasuonno-Squassina-2016}. In this direction we also make use of its natural extension
\begin{align}\label{condition_equivalent_norm}
	\frac{q_+}{p_-}<1+\frac{1}{N}
\end{align}
with $q_+$ being the maximum of $q$ and $p_-$ being the minimum of $p$ on 
$\close$, in order to prove another compact embedding result and the density of smooth functions. Condition \eqref{condition_equivalent_norm2} was 
used for the first time by Baroni-Colombo-Mingione \cite[see (1.8)]{Baroni-Colombo-Mingione-2015} in order to obtain regularity results of local minimizers for double phase integrals, see also the related works \cite{Baroni-Colombo-Mingione-2016} and \cite{Baroni-Colombo-Mingione-2018} of the same authors and Colombo-Mingione \cite{Colombo-Mingione-2015a}, \cite{Colombo-Mingione-2015b}. The condition is needed for the density of smooth functions. We are able to prove the same result under the condition \eqref{condition_equivalent_norm} and Lipschitz continuity on $p,q$ and $\mu$ in this variable exponent setting, see Theorem \ref{theorem_density_1}. 
Since the proof of Theorem \ref{theorem_density_1} does not need the boundedness of $\Omega$, the results holds true for unbounded domains, see Theorem \ref{theorem_density_1_unbounded}. In addition, we give a different 
proof for the density for unbounded domains under weaker conditions, namely, the exponents $p,q$ are bounded, log-H\"older continuous satisfying the log-H\"older decay condition and $q$ is $\frac{\alpha}{q_-}$-H\"older continuous while $\mu$ is $\alpha$-H\"older continuous such that
\begin{align*}
	\frac{q(x)}{p(x)} \le 1 + \frac{\alpha}{N}.
\end{align*}
In this case we do not need to suppose Lipschitz continuity on $p,q$ and $\mu$,  see Theorem \ref{theorem_density_2}.

After having the functional setting, we prove the properties of the new variable exponent double phase operator. It turns out that the operator is 
continuous, bounded, strictly monotone and satisfies the $\Ss$-property which is an important property when dealing with existence results of corresponding equations. 

In particular, we extend the results of Colasuonno-Squassina \cite{Colasuonno-Squassina-2016} concerning the properties of the function space as well as the related embeddings and of Liu-Dai \cite{Liu-Dai-2018} with respect to the properties of the operator to the variable exponent case and we are able to weaken the conditions on the data. So the results in \cite{Colasuonno-Squassina-2016} and \cite{Liu-Dai-2018} hold now under weaker assumptions.

Finally, we consider the existence and uniqueness of the following quasilinear elliptic equations 
\begin{equation}\label{problem8}
    \begin{aligned}
	-\divergenz\left(|\nabla u|^{p(x)-2}\nabla u+\mu(x) |\nabla u|^{q(x)-2}\nabla u\right) & =f(x,u,\nabla u)\quad && \text{in } \Omega,\\
	u & = 0 &&\text{on } \partial \Omega,
    \end{aligned}
\end{equation}
where $f\colon\Omega\times\R\times\R^N\to \R$ is a Carath\'{e}odory function, that is, $x\mapsto f(x,s,\xi)$ is measurable for all $(s,\xi)\in\R\times\R^N$ and $(s,\xi)\mapsto f(x,s,\xi)$ is continuous for a.a.\,$x\in\Omega$. Due to the gradient dependence of $f$ (often called convection term), problem \eqref{problem8}  does not have variational structure, so variational methods cannot be applied. Under a typical growth rate and a minor coercivity condition of $f$ we show the existence of at least one nontrivial weak solution to problem \eqref{problem8} which depends on the first eigenvalue of the $p_-$-Laplacian. Under an additional hypothesis we are also in the position to show uniqueness of the solution in case $ 2 \equiv p(x)<q(x) $ for all $x \in \close$.

To the best of our knowledge, this is the first work dealing with the variable exponent double phase operator given in the general form \eqref{double-phase-variable}. Let us mention some relevant papers in this direction. In 2018, Zhang-R\u{a}dulescu \cite{Zhang-Radulescu-2018} studied the following variable exponent elliptic equation
\begin{align}\label{intro-1}
	-\divergenz \mathrm{A}(x,\nabla u)+V(x)|u|^{\alpha(x)-2}u=f(x,u),
\end{align}
where $\mathrm{A}$ satisfies $p(x)$-structure conditions different from the double phase operator. Under appropriate conditions it is shown that problem \eqref{intro-1} has a pair of nontrivial constant sign solutions and infinitely many solutions, respectively. A similar setting can be found in the paper of Shi-R\u{a}dulescu-Repov\v{s}-Zhang \cite{Shi-Radulescu-Repovs-Zhang-2020}. Existence of a solution for the Baouendi–Grushin operator with convection term has been recently proved by Bahrouni-R\u{a}dulescu-Winkert \cite{Bahrouni-Radulescu-Winkert-2020} who studied the problem
\begin{equation*}
	\begin{aligned}
		-\Delta_{G(x,y)}u+A(x,y)(|u|^{G(x,y)-1}+|u|^{G(x,y)-3})u& =f\left((x,y),u,\nabla u\right) && \text{in } \Omega,\\
		u&= 0  &&\text{on } \partial \Omega,
	\end{aligned}
\end{equation*}
with
\begin{align*}
	A(x,y)=|\nabla_{x} G(x,y)|+|x|^{\gamma}|\nabla_{y} G(x,y)|\quad \text{for all $(x,y)\in \Omega$}.
\end{align*}
Here, $G\colon\close \to (1,\infty) $ is supposed to be a continuous function and $\Delta_{G(x,y)}$ stands for the Baouendi-Grushin operator with variable coefficient, see also the work of Bahrouni-R\u{a}dulescu-Repov\v{s} \cite{Bahrouni-Radulescu-Repovs-2019}.

Very recently, Arora-Shmarev \cite{Arora-Shmarev-2020} (see also Arora \cite{Arora-2021}) treated a parabolic problem of double phase type with variable growth of the form
\begin{align*}
	u_t-\divergenz\left(|\nabla u|^{p(x)-2}\nabla u+a(x) |\nabla u|^{q(x)-2}\nabla u\right)=F(x,u) \quad\text{in }Q_T=\Omega\times (0,T)
\end{align*}
with
\begin{align*}
	\frac{2N}{N+2}<p^-\leq p(x)\leq q(x)<p(x)+\frac{r}{2}
\end{align*}
and 
\begin{align*}
	0<r<r^*=\frac{4p^-}{2N+p^-(N+2)}, \quad p^-=\min_{\overline{Q}_T}p(x).
\end{align*}
Under certain conditions on the right-hand side the existence of a unique 
strong solution with a certain kind of regularity is shown. Finally, we refer to some works dealing with existence results for variable exponent problems defined in usual Sobolev spaces with variable exponents, see, for 
example, Cencelj-R\u{a}dulescu-Repov\v{s} \cite{Cencelj-Radulescu-Repovs-2018}, Gasi\'nski-Papageorgiou \cite{Gasinski-Papageorgiou-2011} and the references therein.

Existence results for double phase problems with constant exponents have been shown by several authors within the last five years. The corresponding eigenvalue problem of the double phase operator with Dirichlet boundary condition has been studied by Colasuonno-Squassina \cite{Colasuonno-Squassina-2016} who proved the existence and properties of related variational eigenvalues. Perera-Squassina \cite{Perera-Squassina-2018} showed the existence of a solution by applying Morse theory where they used a cohomological local splitting to get an estimate of the critical groups at zero. Multiplicity results including sign-changing solutions have been obtained by Gasi\'nski-Papa\-georgiou \cite{Gasinski-Papageorgiou-2019},  Liu-Dai \cite{Liu-Dai-2018} and Gasi\'nski-Winkert \cite{Gasinski-Winkert-2021} via the Nehari manifold treatment due to the lack of regularity results for such problems.

Other existence results for double phase problems based on truncation and 
comparison techniques can be found in the papers of Fiscella \cite{Fiscella-2020} (Hardy potentials), Fiscella-Pinamonti \cite{Fiscella-Pinamonti-2020} (Kirchhoff type problem), Gasi\'nski-Winkert \cite{Gasinski-Winkert-2020a,Gasinski-Winkert-2020b} (parametric and convection problems), Papageorgiou-R\u{a}dulescu-Repov\v{s} \cite{Papageorgiou-Radulescu-Repovs-2020a} (ground state solutions), Zeng-Bai-Gasi\'nski-Winkert \cite{Zeng-Bai-Gasinski-Winkert-2020, Zeng-Gasinski-Winkert-Bai-2020} (multivalued obstacle problems) and the references therein. For related works dealing with certain types of double phase problems we refer to the works of Barletta-Tornatore \cite{Barletta-Tornatore-2021}, Biagi-Esposito-Vecchi \cite{Biagi-Esposito-Vecchi-2021}, Farkas-Winkert \cite{Farkas-Winkert-2021}, Liu-Winkert \cite{Liu-Winkert-2022}, Papageorgiou-R\u{a}dulescu-Repov\v{s} \cite{Papageorgiou-Radulescu-Repovs-2019b} and R\u{a}dulescu \cite{Radulescu-2019}.

The paper is organized as follows. In Section \ref{section_2} we introduce the new Musielak-Orlicz Sobolev space, prove its properties already mentioned above and we will recall some basic facts about the spectrum of the $r$-Laplacian ($r\in(1,\infty)$) as well as definitions from the theory 
of monotone operators. Section \ref{section_3} is devoted to the properties of the new double phase operator and finally, in Section \ref{section_4}, we present our existence and uniqueness result for problem \eqref{problem8}.

%*********************************************************************
%*********************************************************************
\section{A new Musielak-Orlicz Sobolev space and some preliminaries}\label{section_2}
%*********************************************************************
%*********************************************************************

In this section we recall some known results and introduce a new function 
space needed in our approach providing some of its properties.

In the study of equations with variable exponent double phase phenomena we need 
to recall the definition of Lebesgue and Sobolev spaces with variable exponents. Most of the results can be found in the book of Diening-Harjulehto-H\"{a}st\"{o}-R$\mathring{\text{u}}$\v{z}i\v{c}ka \cite{Diening-Harjulehto-Hasto-Ruzicka-2011}, see also Fan-Zhao \cite{Fan-Zhao-2001}, Kov{\'a}{\v{c}}ik-R{\'a}kosn{\'{\i}}k \cite{Kovacik-Rakosnik-1991} and R\u{a}dulescu-Repov\v{s} \cite{Radulescu-Repovs-2015}. We will present them in a less general setting that matches our purpose.

Suppose that $\Omega$ is a bounded domain in $\mathbb{R}^N$ with Lipschitz boundary $\partial\Omega$ and let $r\in C_+(\close)$, where
\begin{align*}
	C_+(\close)=\big\{h \in C(\close) \, : \, 1<h(x) \text{ for all }x\in \close\big\}.
\end{align*}
For any $r\in C_+(\close)$ we define
\begin{align*}
	r_-=\min_{x\in \close}r(x) \quad\text{and}\quad r_+=\max_{x\in\close} r(x).
\end{align*}
Let $M(\Omega)$ be the space of all measurable functions $u\colon \Omega\to\R$. We identify two such functions when they differ only on a Lebesgue-null set. Then, for a given $r \in C_+(\close)$, the variable exponent Lebesgue space $\Lp{r(\cdot)}$ is defined as
\begin{align*}
	\Lp{r(\cdot)}=\l\{u \in M(\Omega)\,:\, \into |u|^{r(x)}\diff x<\infty \r\}
\end{align*}
equipped with the Luxemburg norm given by
\begin{align*}
	\|u\|_{r(\cdot)} =\inf \l \{\lambda>0 \, : \, \into \l(\frac{|u|}{\lambda}\r)^{r(x)}\diff x \leq 1 \r\}.
\end{align*}
It is clear that $(\Lp{r(\cdot)},\|\cdot\|_{r(\cdot)})$ is a separable and reflexive Banach space.  Let $r' \in C_+(\close)$ be the conjugate variable exponent to $r$, that is,
\begin{align*}
	\frac{1}{r(x)}+\frac{1}{r'(x)}=1 \quad\text{for all }x\in\close.
\end{align*}
We know that $\Lp{r(\cdot)}^*=\Lp{r'(\cdot)}$ and H\"older's inequality 
holds, that is,
\begin{align*}
	\into |uv| \diff x \leq \l[\frac{1}{r_-}+\frac{1}{r'_-}\r] \|u\|_{r(\cdot)}\|v\|_{r'(\cdot)} \leq 2 \|u\|_{r(\cdot)}\|v\|_{r'(\cdot)}
\end{align*}
for all $u\in \Lp{r(\cdot)}$ and for all $v \in \Lp{r'(\cdot)}$.

If $r_1, r_2\in C_+(\close)$ and $r_1(x) \leq r_2(x)$ for all $x\in \close$, then we have the continuous embedding
\begin{align*}%\label{embedding_lebesgues_spaces}
	\Lp{r_2(\cdot)} \hookrightarrow \Lp{r_1(\cdot)}.
\end{align*}

The corresponding variable exponent Sobolev spaces can be defined in the same way using the variable exponent Lebesgue spaces. For $r \in C_+(\close)$ the variable exponent Sobolev space $\Wp{r(\cdot)}$ is defined by
\begin{align*}
	\Wp{r(\cdot)}=\l\{ u \in \Lp{r(\cdot)} \,:\, |\nabla u| \in \Lp{r(\cdot)}\r\}
\end{align*}
endowed with the norm
\begin{align*}
	\|u\|_{1,r(\cdot)}=\|u\|_{r(\cdot)}+\|\nabla u\|_{r(\cdot)},
\end{align*}
where $\|\nabla u\|_{r(\cdot)}= \|\,|\nabla u|\,\|_{r(\cdot)}$.

Moreover, we define
\begin{align*}
	\Wpzero{r(\cdot)}= \overline{C^\infty_0(\Omega)}^{\|\cdot\|_{1,r(\cdot)}}.
\end{align*}
The spaces $\Wp{r(\cdot)}$ and $\Wpzero{r(\cdot)}$ are both separable and 
reflexive Banach spaces, in fact uniformly convex Banach spaces. In the space $\Wpzero{r(\cdot)}$, the Poincar\'e inequality holds, namely
\begin{align*}
	\|u\|_{r(\cdot)} \leq c_0 \|\nabla u\|_{r(\cdot)} \quad\text{for all } u 
\in \Wpzero{r(\cdot)}
\end{align*}
with some $c_0>0$. Therefore, we can consider on $\Wpzero{r(\cdot)}$ the equivalent norm
\begin{align*}
	\|u\|_{1,r(\cdot),0}=\|\nabla u\|_{r(\cdot)} \quad\text{for all } u \in \Wpzero{r(\cdot)}.
\end{align*}
For $r \in C_+(\close)$ we introduce the critical Sobolev variable exponents $r^*$ and $r_*$ defined by
\begin{align*}
	r^*(x)=
	\begin{cases}
		\frac{Nr(x)}{N-r(x)} & \text{if }r(x)<N,\\
		\ell_1(x)& \text{if } N \leq r(x),
	\end{cases} \quad\text{for all }x\in\close
\end{align*}
and
\begin{align*}
	r_*(x)=
	\begin{cases}
		\frac{(N-1)r(x)}{N-r(x)} & \text{if }r(x)<N,\\
		\ell_2(x)& \text{if } N \leq r(x),
	\end{cases} \quad\text{for all }x\in\close,
\end{align*}
where $\ell_1, \ell_2 \in C(\close)$ are arbitrarily chosen such that $r(x)<\ell_1(x)$ for all $x \in \close$ and $r(x)<\ell_2(x)$ for all $x \in \close$.

Furthermore, we denote by $C^{0, \frac{1}{|\log t|}}(\close)$ the set of all functions $h\colon \close \to \R$ that are log-H\"older continuous, that is, there exists $C>0$ such that
\begin{align}\label{log_hoelder}
	|h(x)-h(y)| \leq \frac{C}{|\log |x-y||}\quad\text{for all } x,y\in \close \text{ with } |x-y|<\frac{1}{2}.
\end{align}

Now we can state the embedding from $\Wp{r(\cdot)}$ into $\Lp{r^*(\cdot)}$ under condition \eqref{log_hoelder}, see Diening-Harjulehto-H\"{a}st\"{o}-R$\mathring{\text{u}}$\v{z}i\v{c}ka \cite[Corollary 8.3.2]{Diening-Harjulehto-Hasto-Ruzicka-2011} or Fan \cite[Proposition 2.2]{Fan-2010} and Fan-Shen-Zhao \cite{Fan-Shen-Zhao-2001}.

\begin{proposition}\label{embedding_critical}
	Let $r\in C^{0, \frac{1}{|\log t|}}(\close) \cap C_+(\close)$ and let $s\in C(\close)$ be such that
	\begin{align*}
		1\leq  s(x)\leq r^*(x) \quad \text{for all }x\in\close.
	\end{align*}
	Then, we have the continuous embedding
	\begin{align*}
		W^{1,r(\cdot)}(\Omega) \hookrightarrow L^{s(\cdot) }(\Omega ).
	\end{align*}
	If $r\in C_+(\close)$, $s\in C(\close)$ and $1\leq s(x)< r^*(x)$ for all 
$x\in\overline{\Omega}$, then the embedding above is compact.
\end{proposition}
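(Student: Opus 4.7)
The plan is to follow the classical route of reducing the variable-exponent embedding to its constant-exponent counterpart by exploiting the log-Hölder regularity of $r$. The decisive analytic input is the boundedness of the Riesz potential $I_1$ (equivalently, of the Hardy-Littlewood maximal operator) from $L^{r(\cdot)}(\R^N)$ into $L^{r^*(\cdot)}(\R^N)$, a theorem of Diening that holds precisely under the log-Hölder condition \eqref{log_hoelder} together with $1<r_-\le r_+<N$.

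First I would establish the continuous embedding $\Wp{r(\cdot)} \hookrightarrow \Lp{r^*(\cdot)}$. Since $\partial\Omega$ is Lipschitz, any $u\in\Wp{r(\cdot)}$ extends to a compactly supported $\tilde u\in W^{1,r(\cdot)}(\R^N)$ with $\|\tilde u\|_{1,r(\cdot),\R^N}\le C\|u\|_{1,r(\cdot)}$, after first extending $r$ log-Hölder continuously to $\R^N$ via a standard McShane-type construction. For such $\tilde u$ the classical pointwise bound $|\tilde u(x)|\le c\,I_1(|\nabla \tilde u|)(x)$ applies, and Diening's theorem yields $\|\tilde u\|_{r^*(\cdot),\R^N}\le C\|\nabla \tilde u\|_{r(\cdot),\R^N}$. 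Restricting back to $\Omega$ gives the desired inequality. The passage to $\Lp{s(\cdot)}$ for $s(x)\le r^*(x)$ is then immediate from the monotonicity embedding $\Lp{r^*(\cdot)}\hookrightarrow\Lp{s(\cdot)}$ on the bounded domain $\Omega$, with the borderline $s_-=1$ handled by the elementary estimate $|u|^{s(x)}\le 1+|u|^{r^*(x)}$.

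For the compact embedding, fix a bounded sequence $(u_n)\subseteq\Wp{r(\cdot)}$ and assume $s(x)<r^*(x)$ on $\close$. By continuity on the compact set $\close$, there exists $\tau\in C(\close)$ with $s(x)<\tau(x)<r^*(x)$ everywhere, and the first half gives $(u_n)$ bounded in $\Lp{\tau(\cdot)}$. The continuous embedding $\Wp{r(\cdot)}\hookrightarrow \Wp{r_-}$ together with the classical Rellich-Kondrachov theorem provides a subsequence converging almost everywhere to some $u$. A Vitali-type argument combining a.e.\ convergence with the equi-integrability of $|u_n-u|^{s(x)}$ that follows from the $\Lp{\tau(\cdot)}$-bound (via the splitting $|u_n-u|^{s(x)}\le 1+|u_n-u|^{\tau(x)}$ and the strict inequality $s<\tau$) converts this into modular, and hence norm, convergence in $\Lp{s(\cdot)}$.

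The principal obstacle is the Riesz-potential step: this is where the log-Hölder hypothesis is essential and cannot be relaxed without counterexamples due to Pick and R\r{u}\v{z}i\v{c}ka. Constructing the bounded extension operator on $\Wp{r(\cdot)}$ and the log-Hölder extension of $r$ to $\R^N$ is technically delicate but routine given the Lipschitz boundary. An alternative self-contained route avoids the extension by covering $\Omega$ with small balls on which log-Hölder continuity forces $r$ to oscillate by at most $C/|\log\operatorname{diam}|$, then applying constant-exponent Sobolev–Poincaré inequalities ball-by-ball; this proof is more combinatorial but requires no external Riesz-potential machinery.
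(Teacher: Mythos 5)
You should note first that the paper does not prove this proposition at all: it is quoted from Diening--Harjulehto--H\"ast\"o--R\r{u}\v{z}i\v{c}ka \cite[Corollary 8.3.2]{Diening-Harjulehto-Hasto-Ruzicka-2011} and Fan \cite{Fan-2010}, Fan--Shen--Zhao \cite{Fan-Shen-Zhao-2001}. Your argument for the \emph{continuous} embedding (log-H\"older extension of $r$, bounded extension operator, the pointwise bound by the Riesz potential $I_1$, and Diening's boundedness theorem) is exactly the standard proof behind the first of those citations, up to one caveat: you implicitly assume $r_+<N$, while the statement also covers points with $r(x)\ge N$, where $r^*=\ell_1$ is an essentially arbitrary continuous majorant of $r$; that case needs a separate (easy, but not automatic) argument, e.g.\ a localization near $\{r\ge N\}$.

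The genuine gap is in the compactness part. The second sentence of the proposition assumes only $r\in C_+(\close)$ --- no log-H\"older continuity --- but your proof begins with ``the first half gives $(u_n)$ bounded in $\Lp{\tau(\cdot)}$'', and the first half was established \emph{using} the log-H\"older hypothesis through the Riesz-potential/maximal-operator estimate, which is unavailable (and can fail) for merely continuous exponents. This is not a cosmetic restriction for this paper: the compact embedding is used under hypotheses \eqref{condition_1} and \textnormal{(H)}, e.g.\ in Proposition \ref{proposition_embeddings}(iii) and Proposition \ref{prop-generalpoincare}, where $p$ is only assumed continuous. The repair is the classical localization argument of Kov\'a\v{c}ik--R\'akosn\'{\i}k and Fan--Shen--Zhao: by uniform continuity of $r$ and $s$ on $\close$ and the strict inequality $s<r^*$, cover $\close$ by finitely many pieces $\Omega_i$ so small that $\max_{\overline{\Omega_i}\cap\close} s<\bigl(\min_{\overline{\Omega_i}\cap\close} r\bigr)^*$, apply on each piece the constant-exponent embeddings $\Wp{r(\cdot)}\subseteq W^{1,r_i^-}(\Omega_i)\hookrightarrow\hookrightarrow L^{s_i^+}(\Omega_i)$, and conclude by a diagonal subsequence and summation over the finite cover; this yields both the needed bound in some $\Lp{\tau(\cdot)}$ with $\inf(\tau-s)>0$ and the compactness, with no log-H\"older assumption. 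A minor additional point: equi-integrability of $|u_n-u|^{s(x)}$ does not follow from the splitting $|u_n-u|^{s(x)}\le 1+|u_n-u|^{\tau(x)}$ alone (that only gives boundedness); one should use H\"older's inequality with the variable exponent $\tau(\cdot)/s(\cdot)$ and the smallness of $\|\chi_E\|_{(\tau/s)'(\cdot)}$ as $|E|\to0$.
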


In the same way we have the embedding into the boundary Lebesgue space, see Fan \cite[Proposition 2.1]{Fan-2010} and Ho-Kim-Winkert-Zhang \cite[Proposition 2.5]{Ho-Kim-Winkert-Zhang-2020} for the continuous and Fan \cite[Corollary 2.4]{Fan-2008} for the compact embedding.

\begin{proposition}\label{embedding_critical_boundary}
	Suppose that $r\in C_+(\close)\cap W^{1,\gamma}(\Omega)$ for some $\gamma>N$. Let $s\in C(\close)$ be such that
	\begin{align*}
		1\leq  s(x)\leq r_*(x) \quad \text{for all }x\in\close.
	\end{align*}
	Then, we have the continuous embedding
	\begin{align*}
		W^{1,r(\cdot)}(\Omega)\hookrightarrow L^{s(\cdot) }(\partial \Omega).
	\end{align*}
	If $r\in C_+(\close)$, $s\in C(\close)$ and $1\leq s(x)< r_*(x)$ for all 
$x\in\overline{\Omega}$, then the embedding above is compact.
\end{proposition}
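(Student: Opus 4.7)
The strategy is to reduce to the classical constant-exponent trace theorem via a localization argument on $\partial\Omega$ that exploits the H\"older regularity of $r$ implied by $r \in W^{1,\gamma}(\Omega)$ with $\gamma>N$. First I would observe that it suffices to establish the embedding in the borderline case $s\equiv r_*$: since $\partial\Omega$ has finite surface measure, the pointwise inequality $s(x)\le r_*(x)$ yields the continuous inclusion $L^{r_*(\cdot)}(\partial\Omega)\hookrightarrow L^{s(\cdot)}(\partial\Omega)$ for variable Lebesgue spaces, which then composes with the target embedding. Next, by Morrey's embedding, $W^{1,\gamma}(\Omega)\hookrightarrow C^{0,\alpha}(\close)$ with $\alpha=1-N/\gamma>0$, so $r$ is uniformly continuous on $\close$. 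Given any prescribed $\delta>0$, I can therefore cover $\partial\Omega$ by finitely many open balls $U_1,\dots,U_k$ such that $\sup_{U_i\cap\close}r - \inf_{U_i\cap\close}r < \delta$ for each $i$, and fix a subordinate partition of unity $\{\varphi_i\}$.

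For $u\in W^{1,r(\cdot)}(\Omega)$ I would decompose $u=\sum_i \varphi_i u$ and estimate each piece. Setting $r_i^-=\inf_{U_i\cap\close}r$ and $r_i^+=\sup_{U_i\cap\close}r$, the function $\varphi_i u$ is supported in $\overline{U_i}$, so $\varphi_i u\in W^{1,r_i^-}(\Omega\cap U_i)$ by the continuous inclusion $L^{r(\cdot)}(\Omega\cap U_i)\hookrightarrow L^{r_i^-}(\Omega\cap U_i)$ on a set of finite measure, which also controls the gradient after accounting for the product rule $\nabla(\varphi_i u) = \varphi_i \nabla u + u\nabla \varphi_i$ and the embedding $W^{1,r(\cdot)}(\Omega)\hookrightarrow L^{r^*(\cdot)}(\Omega)$ from Proposition \ref{embedding_critical}. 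The classical constant-exponent trace theorem then provides $\varphi_i u\in L^{(r_i^-)_*}(\partial\Omega\cap U_i)$ with norm controlled by $\|u\|_{1,r(\cdot)}$. Choosing $\delta$ small enough so that $(r_i^-)_*\ge r_i^+$ on each patch (possible by continuity of the map $t\mapsto (N-1)t/(N-t)$ on $(1,N)$ and uniform continuity of $r$), this upgrades to an $L^{r_*(\cdot)}(\partial\Omega\cap U_i)$ bound via the modular-to-norm translation for Luxemburg norms. Summing over $i$ gives the continuous embedding.

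For the compact embedding under the strict inequality $s(x)<r_*(x)$, by continuity and compactness of $\close$ I can interpose a continuous function $\tilde s$ with $s(x)<\tilde s(x)<r_*(x)$ everywhere. Repeating the localization above but invoking the classical compact trace theorem $W^{1,r_i^-}(\Omega\cap U_i)\hookrightarrow\hookrightarrow L^{t}(\partial\Omega\cap U_i)$ for any $t<(r_i^-)_*$ yields compactness of $u\mapsto \varphi_i u$ as a map into $L^{\tilde s(\cdot)}(\partial\Omega)$ on each patch; a finite sum of compact operators is compact, and composing with the continuous inclusion $L^{\tilde s(\cdot)}(\partial\Omega)\hookrightarrow L^{s(\cdot)}(\partial\Omega)$ delivers the claim. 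The main obstacle throughout is the bookkeeping of exponents when transferring between patches: one must choose $\delta$ so that $(r_i^-)_*>r_i^+$ uniformly in $i$, and then translate estimates back and forth between norm and modular via the standard bounds $\min\bigl(\rho(u)^{1/r_-},\rho(u)^{1/r_+}\bigr)\le \|u\|_{r(\cdot)}\le \max\bigl(\rho(u)^{1/r_-},\rho(u)^{1/r_+}\bigr)$, taking care that the constants in the classical trace theorem on small patches $U_i$ remain uniformly controlled (which is where the Lipschitz regularity of $\partial\Omega$ and the finite size of the cover enter).
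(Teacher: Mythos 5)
The paper itself does not prove this proposition; it quotes it from Fan (Proposition 2.1 of \cite{Fan-2010}), Ho--Kim--Winkert--Zhang \cite{Ho-Kim-Winkert-Zhang-2020} and, for compactness, Fan \cite{Fan-2008}, so your attempt is measured against those arguments. Your treatment of the compact case is the standard localization and is sound in outline: interposing $\tilde s$ with $s<\tilde s<r_*$ creates a uniform gap which, for small oscillation $\delta$ of $r$ on each patch, absorbs the loss incurred by freezing the exponent at $r_i^-$, and a finite sum of compact operators is compact. This is consistent with the fact that the compactness statement needs only $r\in C_+(\overline{\Omega})$.

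The continuous embedding in the borderline case $s\equiv r_*$, however, contains a genuine gap. On a patch $U_i$ the classical trace theorem with the frozen exponent $r_i^-$ gives control of $\varphi_i u$ only in $L^{(r_i^-)_*}(\partial\Omega\cap U_i)$, whereas the target modular involves $r_*(x)$, which on $U_i$ ranges up to $(r_i^+)_*$. To pass from a constant exponent $a$ to a variable exponent $b(\cdot)$ on a set of finite measure you need $b(x)\le a$, so you would need $(r_i^+)_*\le (r_i^-)_*$; since $t\mapsto \frac{(N-1)t}{N-t}$ is strictly increasing, this fails whenever $r$ is nonconstant on the patch, no matter how small $\delta$ is. The condition you impose, $(r_i^-)_*\ge r_i^+$, is not the relevant one and does not close this gap. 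This is exactly the well-known obstruction to proving \emph{critical} variable-exponent (trace) embeddings by naive freezing: mere uniform continuity of $r$ is insufficient, and the hypotheses $r\in C^{0,\frac{1}{|\log t|}}$ or, here, $r\in W^{1,\gamma}(\Omega)$ with $\gamma>N$ must be used quantitatively (estimates of the type $|B|^{-|r(x)-r(y)|}\le C$ on small balls together with a normalization of $u$, as in the cited proofs of Fan and of Diening--Harjulehto--H\"ast\"o--R\r{u}\v{z}i\v{c}ka), none of which appears in your sketch. As written, your argument proves the trace embedding only for every subcritical exponent $s(x)<r_*(x)$, not for $s=r_*$.
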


\begin{remark}
	Note that for a bounded domain $\Omega\subset \R^N$ and $\gamma>N$ we have the following inclusions
	\begin{align*}%\label{inclusions}
		C^{0,1}(\close)\subset \Wp{\gamma}\subset C^{0,1-\frac{N}{\gamma}}(\close) \subset C^{0, \frac{1}{|\log t|}}(\close).
	\end{align*}
\end{remark}

Finally, we recall the relation between the norm and the related modular function which is defined by
\begin{align*}
	\varrho_{r(\cdot)}(u) =\into |u|^{r(x)}\diff x \quad\text{for all } u\in\Lp{r(\cdot)} \text{ with }r\in C_+(\close).
\end{align*}

\begin{proposition}\label{proposition_1}
	If $r\in C_+(\close)$ and $u\in \Lp{r(\cdot)}$, then we have the following assertions:
	\begin{enumerate}
		\item[\textnormal{(i)}]
			$\|u\|_{r(\cdot)}=\lambda \quad\Longleftrightarrow\quad \varrho_{r(\cdot)}\l(\frac{u}{\lambda}\r)=1$ with $u \neq 0$;
		\item[\textnormal{(ii)}]
			$\|u\|_{r(\cdot)}<1$ (resp. $=1$, $>1$) $\quad\Longleftrightarrow\quad \varrho_{r(\cdot)}(u)<1$ (resp. $=1$, $>1$);
		\item[\textnormal{(iii)}]
			$\|u\|_{r(\cdot)}<1$ $\quad\Longrightarrow\quad$ $\|u\|_{r(\cdot)}^{r_+} \leq \varrho_{r(\cdot)}(u) \leq \|u\|_{r(\cdot)}^{r_-}$;
		\item[\textnormal{(iv)}]
			$\|u\|_{r(\cdot)}>1$ $\quad\Longrightarrow\quad$ $\|u\|_{r(\cdot)}^{r_-} \leq \varrho_{r(\cdot)}(u) \leq \|u\|_{r(\cdot)}^{r_+}$;
		\item[\textnormal{(v)}]
			$\|u_n\|_{r(\cdot)} \to 0 \quad\Longleftrightarrow\quad\varrho_{r(\cdot)}(u_n)\to 0$;
		\item[\textnormal{(vi)}]
			$\|u_n\|_{r(\cdot)}\to +\infty \quad\Longleftrightarrow\quad \varrho_{r(\cdot)}(u_n)\to +\infty$.
		\item[\textnormal{(vii)}]
			$\|u_n\|_{r(\cdot)}\to 1 \quad\Longleftrightarrow\quad \varrho_{r(\cdot)}(u_n)\to 1$.
		\item[\textnormal{(viii)}]
			$u_n \to u$ in $\Lp{r(\cdot)} \quad\Longrightarrow\quad  \varrho_{r(\cdot)} (u_n) \to \varrho_{r(\cdot)} (u)$.
	\end{enumerate}
\end{proposition}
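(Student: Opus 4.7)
The proof is largely a bookkeeping exercise built on the single key observation that for fixed $u \in L^{r(\cdot)}(\Omega)$ with $u \not\equiv 0$, the function
\[
\phi_u(\lambda) := \varrho_{r(\cdot)}\!\l(\frac{u}{\lambda}\r) = \into \l(\frac{|u|}{\lambda}\r)^{r(x)} \diff x
\]
is continuous and strictly decreasing on $(0,\infty)$, with $\phi_u(\lambda)\to\infty$ as $\lambda\to 0^+$ and $\phi_u(\lambda)\to 0$ as $\lambda\to\infty$. The first limit follows from Fatou's lemma (since $r(x)\geq 1$ and $u\not\equiv 0$), the second from dominated convergence once $\lambda > \|u\|_{r(\cdot)}$ gives an integrable majorant. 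Consequently, the equation $\phi_u(\lambda)=1$ has a unique solution $\lambda^*$, and by the definition of the Luxemburg norm this $\lambda^*$ equals $\|u\|_{r(\cdot)}$. This gives (i), and (ii) follows from strict monotonicity of $\phi_u$ applied at $\lambda = 1$ (comparing $\phi_u(1) = \varrho_{r(\cdot)}(u)$ with $\phi_u(\|u\|_{r(\cdot)})=1$).

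To handle (iii)--(iv), set $\lambda := \|u\|_{r(\cdot)}$ and rewrite the identity $\phi_u(\lambda)=1$ as
\[
1 = \into |u|^{r(x)} \lambda^{-r(x)} \diff x.
\]
If $\lambda < 1$, the map $s\mapsto \lambda^{-s}$ is increasing, so $\lambda^{-r_-}\leq \lambda^{-r(x)}\leq \lambda^{-r_+}$, pulling $\lambda^{\pm r(x)}$ outside the integral yields $\lambda^{r_+}\leq \varrho_{r(\cdot)}(u)\leq \lambda^{r_-}$, which is (iii). If $\lambda > 1$, the same bound reverses to give (iv). These two-sided estimates are the real workhorse of the whole proposition.

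Items (v), (vi), and (vii) follow immediately by a squeeze argument. For (v), once $\|u_n\|_{r(\cdot)}<1$ we get $\varrho_{r(\cdot)}(u_n)\leq \|u_n\|_{r(\cdot)}^{r_-}$ from (iii), and once $\varrho_{r(\cdot)}(u_n)<1$ we get $\|u_n\|_{r(\cdot)}^{r_+}\leq \varrho_{r(\cdot)}(u_n)$; the two implications then close up with (ii). Item (vi) uses (iv) in exactly the same way. For (vii), one works on both sides of $1$: when $\|u_n\|_{r(\cdot)}>1$ the bounds from (iv) sandwich the modular between $\|u_n\|_{r(\cdot)}^{r_-}$ and $\|u_n\|_{r(\cdot)}^{r_+}$, both tending to $1$; when $\|u_n\|_{r(\cdot)}<1$ one uses (iii) analogously; and the case $\|u_n\|_{r(\cdot)}=1$ is handled by (ii).

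The one assertion requiring more than algebraic manipulation is (viii). The plan is the standard subsequence-plus-dominated-convergence argument: from $u_n\to u$ in $L^{r(\cdot)}(\Omega)$, extract a subsequence (not relabelled) converging a.e.\ on $\Omega$ and dominated by some $g\in L^{r(\cdot)}(\Omega)$; this is the usual variable-exponent analogue of the Riesz--Fischer construction. Then $|u_n|^{r(x)}\to |u|^{r(x)}$ a.e.\ and is dominated by $|g|^{r(x)}\in L^1(\Omega)$, so Lebesgue's dominated convergence theorem gives $\varrho_{r(\cdot)}(u_n)\to \varrho_{r(\cdot)}(u)$ along the subsequence; a Urysohn-type argument on subsequences upgrades this to convergence of the full sequence. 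The main obstacle, modest as it is, lies exactly here: one must invoke the extraction of an a.e.-convergent dominated subsequence, which is precisely where variable-exponent $L^{r(\cdot)}$ behaves as nicely as ordinary $L^p$. Everything else reduces to the bookkeeping encoded in (i)--(iv).
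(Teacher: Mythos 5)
Your proof is correct. Note that the paper states Proposition \ref{proposition_1} without proof, as recalled background from the variable exponent literature; the closest in-paper comparison is the proof of the analogous Proposition \ref{proposition_modular_properties} for the double phase modular, whose proof the paper says is of the same type. For (i)--(vii) your argument coincides with that template: monotonicity and continuity of $\lambda \mapsto \varrho_{r(\cdot)}\l(\frac{u}{\lambda}\r)$ identify the Luxemburg norm as the unique solution of $\varrho_{r(\cdot)}\l(\frac{u}{\lambda}\r)=1$, and the two-sided scaling estimates obtained by pulling $\lambda^{-r(x)}$ out of the integral give (iii)--(iv), from which (v)--(vii) follow by squeezing. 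The only place you genuinely diverge is (viii): the paper's analogue argues via Vitali's theorem, deducing uniform integrability of the integrands from $\varrho(u_n-u)\to 0$ together with an elementary pointwise inequality and an a.e.\ convergent subsequence, whereas you construct a single dominating function $g \in \Lp{r(\cdot)}$ by the Riesz--Fischer fast-subsequence trick and apply dominated convergence, closing with the subsequence principle. Both routes are valid; yours relies on $r_+<\infty$ (so that $|g|^{r(\cdot)} \in \Lp{1}$ and the domination is integrable), while the Vitali route avoids constructing a majorant and transfers more directly to the weighted modular $\rho_{\mathcal{H}}$, which is presumably why the paper uses it there. Two cosmetic points: in (ii)--(iv) the case $u=0$ should be dispatched separately (it is trivial), and in (vii) you only spell out the direction $\|u_n\|_{r(\cdot)}\to 1 \Rightarrow \varrho_{r(\cdot)}(u_n)\to 1$; the converse follows by reading the same sandwich inequalities in the other direction and deserves a sentence.
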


Now we recall some definitions and properties concerning Musielak-Orlicz spaces which are mainly taken from the book of Musielak \cite{Musielak-1983}. We also refer to the books of Diening-Harjulehto-H\"{a}st\"{o}-R$\mathring{\text{u}}$\v{z}i\v{c}ka  \cite{Diening-Harjulehto-Hasto-Ruzicka-2011}  and  Harjulehto-H\"{a}st\"{o} \cite{Harjulehto-Hasto-2019} as well as the papers of Colasuonno-Squassina \cite{Colasuonno-Squassina-2016} and 
Fan \cite{Fan-2012}.

We start with the following definition.
\begin{definition}\label{def_phi-function}$~$
	\begin{enumerate}
		\item[\textnormal{(i)}]
			A continuous and convex function $\ph\colon[0,\infty)\to[0,\infty)$ is 
said to be a $\Phi$-function if $\ph(0)=0$ and $\ph(t)>0$ for all $t >0$.
		\item[\textnormal{(ii)}]
			A function $\ph\colon\Omega \times [0,\infty)\to[0,\infty)$ is said to 
be a generalized $\Phi$-function if $\ph(\cdot,t)$ is measurable for all $t\geq 0$ and $\ph(x,\cdot)$ is a $\Phi$-function for a.\,a.\,$x\in\Omega$. We denote the set of all generalized $\Phi$-functions on $\Omega$ by $\Phi(\Omega)$.
		\item[\textnormal{(iii)}]
			A function $\ph\in\Phi(\Omega)$ is locally integrable if $\ph(\cdot,t) 
\in L^{1}(\Omega)$ for all $t>0$.
		\item[\textnormal{(iv)}]
			A function $\ph\in\Phi(\Omega)$ satisfies the $\Delta_2$-condition if there exist a positive constant $C$ and a nonnegative function $h\in\Lp{1}$ such that
			\begin{align*}
				\ph(x,2t) \leq C\ph(x,t)+h(x) 
			\end{align*}
			for a.\,a.\,$x\in\Omega$ and for all $t\in [0,\infty)$.
		\item[\textnormal{(v)}]
			Given $\ph, \psi \in \Phi(\Omega)$, we say that $\ph$ is weaker than $\psi$, denoted by $\ph \prec \psi$, if there exist two positive constants 
$C_1, C_2$ and a nonnegative function $h\in\Lp{1}$ such that
			\begin{align*}
				\ph(x,t) \leq C_1 \psi(x,C_2t)+h(x)
			\end{align*}
			for a.\,a.\,$x\in \Omega$ and for all $t \in[0,\infty)$.
	\end{enumerate}
\end{definition}

For a given $\ph \in \Phi(\Omega)$ we define the corresponding modular $\rho_\ph$ by
\begin{align}\label{modular_orlicz}
	\rho_\ph(u):= \into \ph\l(x,|u|\r)\diff x.
\end{align}
Then, the Musielak-Orlicz space $L^\ph(\Omega)$ is defined by
\begin{align*}
	L^\ph(\Omega):=\left \{u \in M(\Omega)\,:\, \text{there exists }\alpha>0 \text{ such that }\rho_\ph(\alpha u)< +\infty \right \}
\end{align*}
equipped with the norm
\begin{align*}
	\|u\|_{\ph}:=\inf \l\{\alpha >0 \, : \, \rho_\ph \l(\frac{u}{\alpha}\r)\leq 1\r\}.
\end{align*}

The following proposition can be found in Musielak \cite[Theorem 7.7 and Theorem 8.5]{Musielak-1983}.

\begin{proposition}\label{prop_complete}$~$
	\begin{enumerate}
		\item[\textnormal{(i)}]
			Let $\ph \in \Phi(\Omega)$. Then the Musielak-Orlicz space $\Lp{\ph}$ is complete with respect to the norm $\|\cdot\|_\ph$, that is, $\l(\Lp{\ph},\|\cdot\|_\ph\r)$ is a Banach space.
		\item[\textnormal{(ii)}]
			Let $\ph,\psi \in \Phi(\Omega)$ be locally integrable with $\ph \prec \psi$. Then
			\begin{align*}
				\Lp{\psi} \hookrightarrow \Lp{\ph}.
			\end{align*}
	\end{enumerate}
\end{proposition}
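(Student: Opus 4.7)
\emph{Proof plan.}

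For part (i), my plan is to follow the Riesz-Fischer scheme adapted to Musielak-Orlicz spaces. Starting from a Cauchy sequence $(u_n) \subset L^\varphi(\Omega)$, I extract a subsequence $(u_{n_k})$ with $\|u_{n_{k+1}} - u_{n_k}\|_\varphi < 2^{-k-1}$, so that the nonnegative partial sums $s_K = \sum_{k=1}^K |u_{n_{k+1}} - u_{n_k}|$ satisfy $\|s_K\|_\varphi \leq 1/2$ by the triangle inequality. Convexity of $\varphi(x, \cdot)$ together with the definition of the Luxemburg norm yields the uniform modular estimate $\rho_\varphi(2 s_K) \leq 1$, and monotone convergence (justified by the continuity of $\varphi(x, \cdot)$) passes this to the limit to give $\rho_\varphi(2 s_\infty) \leq 1$, where $s_\infty = \sum_{k=1}^\infty |u_{n_{k+1}} - u_{n_k}|$. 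In particular $s_\infty < \infty$ almost everywhere, so the telescoping identity $u_{n_k} = u_{n_1} + \sum_{j<k}(u_{n_{j+1}} - u_{n_j})$ shows that $(u_{n_k})$ converges a.e.\ to a measurable function $u$. A final application of Fatou's lemma to the integrand $\varphi(x, \lambda|u - u_{n_\ell}|)$ gives
\[
  \rho_\varphi(\lambda(u - u_{n_\ell})) \leq \liminf_{m \to \infty} \rho_\varphi(\lambda(u_{n_m} - u_{n_\ell}))
\]
for every $\lambda > 0$, which together with the original Cauchy property yields $u \in L^\varphi(\Omega)$ and $u_n \to u$ in norm, establishing completeness.

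For part (ii), I would first establish the set inclusion directly from the pointwise bound $\varphi(x, t) \leq C_1 \psi(x, C_2 t) + h(x)$: given $u \in L^\psi(\Omega)$, pick $\alpha > 0$ with $\rho_\psi(\alpha u) < \infty$, and substitute $t = \alpha |u(x)|/C_2$ and integrate to get
\[
  \rho_\varphi(\alpha u / C_2) \leq C_1 \rho_\psi(\alpha u) + \|h\|_{L^1(\Omega)} < \infty,
\]
so $u \in L^\varphi(\Omega)$ and the linear inclusion $L^\psi(\Omega) \hookrightarrow L^\varphi(\Omega)$ is well-defined. For the continuity of this inclusion I plan to apply the closed graph theorem: both spaces are Banach by part (i), and if $u_n \to u$ in $L^\psi(\Omega)$ and $u_n \to v$ in $L^\varphi(\Omega)$, then the Riesz-Fischer argument from part (i), applied inside each space separately, extracts a common subsequence converging a.e.\ to both $u$ and $v$, forcing $u = v$ a.e.

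The step that I expect to be the main obstacle is the uniform modular estimate $\rho_\varphi(2 s_K) \leq 1$ in part (i): one must carefully combine the triangle inequality for the Luxemburg norm with the convexity of $\varphi(x, \cdot)$ and $\varphi(x, 0) = 0$ to turn a norm bound into a modular bound, and it is also here that the continuity of $\varphi(x, \cdot)$ (ensuring that the infimum defining the Luxemburg norm is in fact attained) plays a role. Once this calibration is in place, the a.e.\ limit follows from monotone convergence and the rest is a standard Fatou argument; part (ii) then reduces to bookkeeping the constants plus an application of the closed graph theorem.
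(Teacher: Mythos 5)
Your argument is correct. Note, however, that the paper does not prove this proposition at all: it is quoted directly from Musielak's monograph (Theorems 7.7 and 8.5), so there is no in-paper proof to compare with; what you have written is essentially the standard textbook argument behind those cited results. Your part (i) is the usual Riesz--Fischer scheme, and the calibration step you flag is fine: from $\|s_K\|_\ph\le \tfrac12$ the unit-ball property (which holds because $\ph(x,\cdot)$ is continuous, hence the modular is left-continuous by monotone convergence) gives $\rho_\ph(s_K/\|s_K\|_\ph)\le 1$, and monotonicity of $\ph(x,\cdot)$ (convexity plus $\ph(x,0)=0$) yields $\rho_\ph(2s_K)\le 1$. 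One small point you use implicitly is that $\rho_\ph(2s_\infty)\le 1$ forces $s_\infty<\infty$ a.e.; this is justified because a $\Phi$-function is positive for $t>0$ and convex, hence grows at least linearly, so $\ph(x,t)\to\infty$ as $t\to\infty$ for a.a.\ $x$. In part (ii) your modular estimate gives the set inclusion exactly as in the standard proof; for continuity, your closed-graph argument (via a.e.-convergent subsequences, extracted by the same summability trick as in (i)) is valid, but a direct estimate is slightly more elementary and quantitative: if $\|u\|_\psi\le 1$ then $\rho_\psi(u)\le 1$, hence $\rho_\ph(u/C_2)\le C_1+\|h\|_1=:M$, and convexity gives $\rho_\ph\bigl(u/(C_2\max\{1,M\})\bigr)\le 1$, so $\|u\|_\ph\le C_2\max\{1,M\}\,\|u\|_\psi$ by homogeneity. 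Either route establishes the proposition; the local integrability assumed in the statement is not actually needed for your argument.
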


Next, we recall the unit ball property, see the books of Musielak \cite[Theorem 8.13]{Musielak-1983} and Diening-Harjulehto-H\"{a}st\"{o}-R$\mathring{\text{u}}$\v{z}i\v{c}ka  \cite[Lemma 2.1.14]{Diening-Harjulehto-Hasto-Ruzicka-2011}.

\begin{proposition}\label{prop_delta_two_and_modular}
	Let $\ph \in \Phi(\Omega)$.
	\begin{enumerate}
		\item[\textnormal{(i)}]
			If $\ph$ satisfy the  $\Delta_2$-condition, then
			\begin{align*}
				L^\ph(\Omega)=\left \{u \in M(\Omega)\,:\,\rho_\ph(u)< +\infty \right \}.
			\end{align*}
		\item[\textnormal{(ii)}]
			Furthermore, if $u \in \Lp{\ph}$, then $\rho_\ph(u)<1$ (resp.\,$=1$; 
$>1)$ if and only if $\|u\|_\ph<1$ (resp.\,$=1$; $>1$).
	\end{enumerate}
\end{proposition}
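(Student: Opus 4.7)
The plan is to handle the two parts separately. For part (i), the inclusion $\supseteq$ is immediate by taking the multiplier $\alpha=1$ in the definition of $L^\ph(\Omega)$. For the nontrivial inclusion, I would start with $u\in L^\ph(\Omega)$, which gives some $\alpha>0$ with $\rho_\ph(\alpha u)<+\infty$. If $\alpha\ge 1$, monotonicity of $\ph(x,\cdot)$ (which follows from convexity plus $\ph(x,0)=0$) immediately yields $\rho_\ph(u)\le\rho_\ph(\alpha u)<+\infty$. If $\alpha<1$, I would fix $k\in\N$ with $2^k\alpha\ge 1$ and iterate the $\Delta_2$-condition $k$ times to obtain a bound of the form
\begin{align*}
	\rho_\ph(2^k \alpha u)\;\le\; C^k\rho_\ph(\alpha u)+\Bigl(\textstyle\sum_{j=0}^{k-1}C^j\Bigr)\|h\|_1\;<\;+\infty,
\end{align*}
and then conclude by monotonicity in the second slot that $\rho_\ph(u)\le\rho_\ph(2^k\alpha u)<+\infty$.

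For part (ii), the key tool in one direction is convexity and in the other direction is a continuity property of $\alpha\mapsto\rho_\ph(u/\alpha)$ derived from $\Delta_2$. I would first prove the ``$<$'' case: if $\|u\|_\ph<1$, pick $\beta\in(\|u\|_\ph,1)$ with $\rho_\ph(u/\beta)\le 1$ (from the infimum definition), and then use convexity with $\ph(x,0)=0$ to get $\ph(x,|u|)=\ph(x,\beta\cdot|u|/\beta)\le\beta\,\ph(x,|u|/\beta)$, hence $\rho_\ph(u)\le\beta\rho_\ph(u/\beta)\le\beta<1$. Conversely, if $\rho_\ph(u)<1$, I would show that $\alpha\mapsto\rho_\ph(u/\alpha)$ is continuous at $\alpha=1$: for $\alpha\in(\tfrac12,1]$ we have $|u/\alpha|\le 2|u|$, so $\Delta_2$ gives the integrable majorant $\ph(x,|u/\alpha|)\le C\ph(x,|u|)+h(x)$, and dominated convergence yields $\rho_\ph(u/\alpha)\to\rho_\ph(u)<1$ as $\alpha\uparrow 1$. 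Picking any such $\alpha<1$ with $\rho_\ph(u/\alpha)\le 1$ gives $\|u\|_\ph\le\alpha<1$.

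With the strict case settled, the remaining equalities and strict ``$>$'' statements follow by elimination: if $\rho_\ph(u)=1$, then $\|u\|_\ph\le 1$ (take $\alpha=1$), but $\|u\|_\ph<1$ would force $\rho_\ph(u)<1$ by the previous step, so $\|u\|_\ph=1$; the reverse equality and both ``$>$'' cases are handled by the same contrapositive argument, using that the three alternatives partition the real line.

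The main obstacle I anticipate is the right continuity of the modular along $\alpha\to 1^-$ needed for ``$\rho_\ph(u)<1\Rightarrow\|u\|_\ph<1$''. Without the $\Delta_2$-condition there is no obvious integrable dominating function for $\ph(x,|u/\alpha|)$, so the argument genuinely uses the hypothesis of the proposition; the bound $|u/\alpha|\le 2|u|$ together with $\Delta_2$ is the cleanest way to produce one. The other steps are essentially bookkeeping around convexity and monotonicity of $\ph(x,\cdot)$.
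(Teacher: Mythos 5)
The paper itself gives no proof of this proposition (it is quoted from Musielak, Theorem 8.13, and Diening--Harjulehto--H\"ast\"o--R\r{u}\v{z}i\v{c}ka, Lemma 2.1.14), so a self-contained argument like yours is a legitimate route, and most of it is correct: part (i) (iterating $\Delta_2$ $k$ times and using that $\ph(x,\cdot)$ is nondecreasing) is fine, the implication $\|u\|_\ph<1\Rightarrow\rho_\ph(u)<1$ via $\ph(x,\beta t)\le\beta\ph(x,t)$ is fine, and your observation that the converse $\rho_\ph(u)<1\Rightarrow\|u\|_\ph<1$ genuinely needs $\Delta_2$ (through the majorant $\ph(x,2|u|)\le C\ph(x,|u|)+h(x)$ and dominated convergence as $\alpha\uparrow1$) is exactly right.

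The gap is in the closing step, where you claim the ``$=1$'' and ``$>1$'' cases follow ``by elimination.'' From what you have proved (the equivalence of the two strict ``$<1$'' statements, plus the trivial fact that $\rho_\ph(u)\le1$ makes $\alpha=1$ admissible and hence $\|u\|_\ph\le1$) you do get $\rho_\ph(u)=1\Rightarrow\|u\|_\ph=1$ and $\|u\|_\ph>1\Rightarrow\rho_\ph(u)>1$. But the remaining implications, $\|u\|_\ph=1\Rightarrow\rho_\ph(u)=1$ and $\rho_\ph(u)>1\Rightarrow\|u\|_\ph>1$, are both equivalent to the inequality $\|u\|_\ph\le1\Rightarrow\rho_\ph(u)\le1$, and this does not follow by case elimination: nothing you have established excludes the configuration $\|u\|_\ph=1$ together with $\rho_\ph(u)>1$, so invoking ``the same contrapositive argument'' is circular precisely at that point. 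The missing ingredient is one more limiting argument: if $\|u\|_\ph\le1$, then $\rho_\ph(u/\beta)\le1$ for every $\beta>1$, and since $\ph(x,\cdot)$ is continuous and nondecreasing, $\ph(x,|u|/\beta)\uparrow\ph(x,|u|)$ as $\beta\downarrow1$, whence by monotone convergence $\rho_\ph(u)=\sup_{\beta>1}\rho_\ph\l(\frac{u}{\beta}\r)\le1$ (no $\Delta_2$ needed here); alternatively, under $\Delta_2$ part (i) gives $\rho_\ph(u)<\infty$, so $\lambda\mapsto\rho_\ph(\lambda u)$ is a finite convex, hence continuous, function and the same conclusion follows. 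With that one line added, all six implications close up and your proof is complete.
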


%In order to state a H\"older-type inequality, we first need the following definition.
%
%\begin{definition}
%	Let $\ph \in \Phi(\Omega)$. The conjugate function $\ph^*\colon \Omega\times\R\to\R$ of $\ph$ is defined by
%	\begin{align*}
%		\ph^*(x,s) = \sup_{t\geq 0} \l(st-\ph(x,t) \r)
%	\end{align*}
%	for a.\,a.\,$x\in\Omega$ and for all $s \in [0,\infty)$.
%\end{definition}
%
%H\"older's inequality stated in the next proposition can be found in Diening-Harjulehto-H\"{a}st\"{o}-R$\mathring{\text{u}}$\v{z}i\v{c}ka \cite[Lemma 2.6.5]{Diening-Harjulehto-Hasto-Ruzicka-2011}
%
%\begin{proposition}
%	If $\ph \in \Phi(\Omega)$, then
%	\begin{align*}
%		\into |uv|\diff x \leq 2 \|u\|_\ph \|v\|_{\ph^*}
%	\end{align*}
%	for all $u \in \Lp{\ph}$ and for all $v \in \Lp{\ph^*}$.
%\end{proposition}

  Now we are in the position to give the definition of a $N$-function.

\begin{definition}
	The function $\ph\colon [0,\infty) \to [0,\infty)$ is called $N$-function if it is a $\Phi$-function such that
	\begin{align*}
		\lim_{t\to 0^+} \frac{\ph(t)}{t}=0
		\quad\text{and}\quad
		\lim_{t\to\infty} \frac{\ph(t)}{t}=\infty.
	\end{align*}
	We call a function $\ph\colon\Omega\times \R\to[0,\infty)$ a generalized 
$N$-function if $\ph(\cdot,t)$ is measurable for all $t \in \R$ and $\ph(x,\cdot)$ is a $N$-function for a.\,a.\,$x\in\Omega$. We denote the class 
of all generalized $N$-functions by $N(\Omega)$. Note that $\ph^* \in N(\Omega)$ whenever $\ph \in N(\Omega)$.
\end{definition}

\begin{definition}
	Let $\phi, \psi \in N(\Phi)$. The function $\phi$ increases essentially slower than $\psi$ near infinity, if for any $k>0$
	\begin{align*}
		\lim_{t\to\infty} \frac{\phi(x,kt)}{\psi(x,t)}=0 \quad \text{uniformly for a.\,a.\,}x\in\Omega.
	\end{align*}
	We write $\phi \ll \psi$.
\end{definition}

Let $\ph\in\Phi(\Omega)$. The corresponding Sobolev space $\Wp{\ph}$ is defined by
\begin{align*}
	\Wp{\ph} := \left \{u \in \Lp{\ph} \,:\, |\nabla u| \in \Lp{\ph} \right\}
\end{align*}
equipped with the norm
\begin{align*}
	\|u\|_{1,\ph} = \|u\|_\ph+\|\nabla u\|_\ph
\end{align*}
where $\|\nabla u\|_\ph=\| \, |\nabla u| \,\|_\ph$. We denote $\rho_\ph ( \nabla u ) = \rho_\ph ( |\nabla u| )$ as well. If $\ph \in N(\Omega)$ is locally integrable, we denote by $\Wpzero{\ph}$ the completion of $C^\infty_0(\Omega)$ in $\Wp{\ph}$.

The next theorem gives a criterion when the Sobolev spaces are Banach spaces and also reflexive. This result can be found in Musielak \cite[Theorem 10.2]{Musielak-1983} and  Fan \cite[Proposition 1.7 and 1.8]{Fan-2012b}.

\begin{theorem}\label{prop_complete2}
	Let $\ph\in N(\Omega)$ be locally integrable such that
	\begin{align}\label{condition_ph}
		\inf_{x\in\Omega} \ph(x,1)>0.
	\end{align}
	Then the spaces $\Wp{\ph}$ and $\Wpzero{\ph}$ are separable Banach spaces which are reflexive if $\Lp{\ph}$ is reflexive.
\end{theorem}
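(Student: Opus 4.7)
The plan is to realize $\Wp{\ph}$ as a closed subspace of a product of Musielak-Orlicz spaces and transfer the three structural properties from there. First I would consider the canonical linear map
\begin{equation*}
T\colon \Wp{\ph} \to X := \Lp{\ph} \times L^{\ph}(\Omega;\R^N), \qquad T(u) = (u, \nabla u),
\end{equation*}
which is an isometry onto its image by the very definition of $\|\cdot\|_{1,\ph}$. By Proposition \ref{prop_complete}(i), both factors of $X$ are Banach spaces, hence so is $X$.

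The central step is to prove that $T(\Wp{\ph})$ is closed in $X$. If $(u_n)\subset \Wp{\ph}$ and $(u_n,\nabla u_n)\to (u,v)$ in $X$, I need to identify $v$ with the distributional gradient of $u$. For this I would use that $\ph(x,\cdot)$ is convex with $\ph(x,0)=0$, so $t\mapsto \ph(x,t)/t$ is non-decreasing and $\ph(x,t)\ge t\,\ph(x,1)$ for $t\ge 1$. Combined with the hypothesis $\inf_{x\in\Omega}\ph(x,1)>0$ and the boundedness of $\Omega$, splitting the integration over $\{|w|\ge 1\}$ and $\{|w|<1\}$ yields a continuous embedding $\Lp{\ph}\hookrightarrow \Lp{1}$. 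Consequently $u_n\to u$ and $\nabla u_n\to v$ in $\Lp{1}$, hence in $\mathcal{D}'(\Omega)$; passing to the limit in the defining identity for weak derivatives against every test function $\eta\in C_0^\infty(\Omega)$ yields $v=\nabla u$. Thus $T(\Wp{\ph})$ is closed, so $\Wp{\ph}$ is a Banach space, and $\Wpzero{\ph}$ inherits completeness as a closed subspace of $\Wp{\ph}$ by its very definition as a closure.

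The remaining two properties follow by standard transfer arguments. Separability of $\Lp{\ph}$ (a classical fact for locally integrable generalized $\Phi$-functions satisfying \eqref{condition_ph}) passes to the product $X$, to its closed subspace isometric to $\Wp{\ph}$, and hence to $\Wpzero{\ph}$. Likewise, if $\Lp{\ph}$ is reflexive, then so is $X$, and reflexivity is inherited by closed subspaces. I expect the main obstacle to be precisely the embedding $\Lp{\ph}\hookrightarrow \Lp{1}$ used to identify $v$ as a genuine weak gradient, since this is where the infimum hypothesis $\inf_{x\in\Omega}\ph(x,1)>0$ really has to be exploited, with care taken not to invoke a $\Delta_2$-assumption on $\ph$ that is not postulated in the statement.
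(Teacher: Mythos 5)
The paper itself offers no proof of Theorem \ref{prop_complete2}; it is quoted from Musielak (Theorem 10.2) and Fan (Propositions 1.7 and 1.8), so your self-contained argument is a genuinely different route, and its structural part is sound. Since $\ph(x,\cdot)$ is convex with $\ph(x,0)=0$, the map $t\mapsto \ph(x,t)/t$ is nondecreasing, hence $\ph(x,t)\geq t\,\inf_{y\in\Omega}\ph(y,1)$ for $t\geq 1$; together with $|\Omega|<\infty$ this gives the continuous embedding $\Lp{\ph}\hookrightarrow \Lp{1}$, which is exactly the right use of \eqref{condition_ph}. From there the closedness of the image of $u\mapsto(u,\nabla u)$ in $\Lp{\ph}\times L^{\ph}(\Omega;\R^N)$, the completeness of $\Wp{\ph}$ and of its closed subspace $\Wpzero{\ph}$, and the inheritance of reflexivity by closed subspaces of the reflexive product are all correct; the only point worth a sentence is that $L^{\ph}(\Omega;\R^N)$, normed by $\|\,|w|\,\|_{\ph}$, is isomorphic to $(\Lp{\ph})^N$ (componentwise comparison of moduli), so it inherits completeness and reflexivity from Proposition \ref{prop_complete}\,(i) and the scalar hypothesis.

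The genuine gap is the separability step. It is not a classical fact that $\Lp{\ph}$ is separable for every locally integrable $\ph\in N(\Omega)$ with $\inf_{x\in\Omega}\ph(x,1)>0$: separability of Musielak--Orlicz spaces is obtained under the $\Delta_2$-condition (Musielak, Theorem 7.10) and in general fails without it, and $\Delta_2$ is precisely the standing assumption in the sources the paper cites --- so the hypothesis you explicitly tried to avoid is the one your ``classical fact'' secretly uses. Concretely, $\ph(t)=e^{t}-t-1$ is an $N$-function, locally integrable, with $\ph(1)=e-2>0$, yet $\Lp{\ph}$ is nonseparable; worse, on $\Omega=(0,1)^N$ the functions $u_a(x)=\int_0^{x_1}\log\frac{1}{|s-a|}\diff s$, $a\in(0,1)$, lie in $\Wp{\ph}$ and satisfy $\|\nabla u_a-\nabla u_b\|_{\ph}\geq 1$ for $a\neq b$ (the modular of $(\nabla u_a-\nabla u_b)/\lambda$ diverges for $\lambda\leq 1$), so even the separability assertion of the theorem cannot be derived from the stated hypotheses alone --- no repair of your reduction within those hypotheses is possible. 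The fix is to add (or note that the cited results assume) the $\Delta_2$-condition: then $\Lp{\ph}$ is separable and your product/closed-subspace transfer does complete the proof. For the purposes of this paper nothing is lost, since $\mathcal{H}$ satisfies $\Delta_2$ by \eqref{Delta2-condition}.
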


Let us now come to our special double phase $N$-function. To this end, let $\mathcal{H} \colon \Omega \times [0,\infty) \to [0,\infty)$ be defined 
as
\begin{align*}
	\mathcal{H}(x,t):=t^{p(x)} +\mu(x)t^{q(x)} \quad\text{for all } (x,t)\in \Omega \times [0,\infty),
\end{align*}
where we suppose the following:
\begin{align}\label{condition_1}
	\begin{split}
		&\Omega\subseteq \R^N, N\geq 2, \text{ is a bounded domain with Lipschitz boundary }\partial\Omega,\\
		&p,q\in C_+(\close) \text{ such that }1<p(x)<N \text{ and } p(x) < q(x) 
\text{ for all }x\in\close, \\
		&\text{and }0 \leq \mu(\cdot) \in \Lp{1}.
	\end{split}
\end{align}

It is clear that $ \mathcal{H}$ is a locally integrable, generalized $N$-function which satisfies \eqref{condition_ph} and it fulfills the $\Delta_2$-condition, that is,
\begin{align}\label{Delta2-condition}
	\mathcal{H}(x,2t)=(2t)^{p(x)} +\mu(x)(2t)^{q(x)} \leq 2^{q_+} \mathcal{H}(x,t).
\end{align}

Recall that the corresponding modular to $\mathcal{H}$ is given by
\begin{align*}
	\rho_{\mathcal{H}}(u) = \into \mathcal{H} (x,|u|)\diff x.
\end{align*}
Then, the corresponding Musielak-Orlicz space $\Lp{\mathcal{H}}$ is given 
by
\begin{align*}
	L^{\mathcal{H}}(\Omega)=\left \{u \in M(\Omega) \,:\,\rho_{\mathcal{H}}(u) < +\infty \right \},
\end{align*}
see Proposition \ref{prop_delta_two_and_modular}, endowed with the norm
\begin{align*}
	\|u\|_{\mathcal{H}} = \inf \left \{ \tau >0 : \rho_{\mathcal{H}}\left(\frac{u}{\tau}\right) \leq 1  \right \}.
\end{align*}

Similarly, we can introduce the spaces $\Wp{\mathcal{H}}$ and $\Wpzero{\mathcal{H}}$ equipped with the norm
\begin{align*}
	\|u\|_{1,\mathcal{H}} = \|u\|_{\mathcal{H}}+\|\nabla u\|_{\mathcal{H}}.
\end{align*}

We recall the following definition which is needed for the reflexivity of 
the spaces $\Lp{\mathcal{H}}$, $\Wp{\mathcal{H}}$ and $\Wpzero{\mathcal{H}}$.

\begin{definition} \label{definition-uniform-convexity}
	A function $\ph \in N(\Omega)$ is said to be uniformly convex if for every $\eps>0$ there exists $\delta>0$ such that
	\begin{align*}
		|t-s| \leq \eps \max \{t,s\}
		\quad \text{or}\quad
		\ph\l(x,\frac{t+s}{2}\r) \leq (1-\delta) \frac{\ph(x,t)+\ph(x,s)}{2}
	\end{align*}
	for all $t,s \geq 0$ and for a.\,a.\,$x\in\Omega$.
\end{definition}

Now we can state the following result which is inspired by the work of Colasuonno-Squassina \cite{Colasuonno-Squassina-2016}.

\begin{proposition} \label{proposition-reflexivity}
	Let hypotheses \eqref{condition_1} be satisfied. Then, the norm $\|\cdot\|_{\mathcal{H}}$ defined on $\Lp{\mathcal{H}}$ is uniformly convex and hence the spaces $\Lp{\mathcal{H}}$, $\Wp{\mathcal{H}}$ and $\Wpzero{\mathcal{H}}$ are reflexive Banach spaces. Furthermore, for any sequence $\{u_n\}_{n \in \N} \subseteq \Lp{\mathcal{H}}$ such that
	\begin{equation*}
		u_n \weak u \text{ in }\Lp{\mathcal{H}}\quad\text{and}\quad
		\rho_{\mathcal{H}} (u_n) \to \rho_{\mathcal{H}} (u)
	\end{equation*}
	it follows that $u_n \to u$ in $\Lp{\mathcal{H}}$.
\end{proposition}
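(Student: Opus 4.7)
The strategy is in three steps: uniform convexity of the modular $\mathcal{H}$; passage to uniform convexity of the Luxemburg norm and hence reflexivity; and the Radon-Riesz property via a midpoint argument.

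First, I would show that $\mathcal{H}(x,\cdot)$ is uniformly convex in the sense of Definition \ref{definition-uniform-convexity}, with a modulus independent of $x$. The building blocks $t\mapsto t^{p(x)}$ and $t\mapsto t^{q(x)}$ are uniformly convex $\Phi$-functions (classical Clarkson-type inequality for exponents $>1$); because $1<p_-\le p(x)\le p_+$ and $1<q_-\le q(x)\le q_+$ on $\close$, their moduli can be chosen uniformly in $x$. A short calculation shows that a nonnegative linear combination $a_1\phi_1+a_2\phi_2$ of uniformly convex $\Phi$-functions with moduli $\delta_1,\delta_2$ is uniformly convex with modulus $\min(\delta_1,\delta_2)$, independently of the weights $a_1,a_2$. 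Applied pointwise with $a_1=1$ and $a_2=\mu(x)\ge 0$, this yields the claimed uniform convexity of $\mathcal{H}$, including on the set where $\mu$ vanishes.

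Second, combining uniform convexity of $\mathcal{H}$ with the $\Delta_2$-condition \eqref{Delta2-condition}, a classical argument in Musielak--Orlicz theory (as carried out for the constant exponent double phase case in \cite{Colasuonno-Squassina-2016} and in full generality in \cite{Musielak-1983, Diening-Harjulehto-Hasto-Ruzicka-2011}) transfers uniform convexity of $\mathcal{H}$ to uniform convexity of the Luxemburg norm $\|\cdot\|_{\mathcal{H}}$. Milman--Pettis then gives reflexivity of $L^{\mathcal{H}}(\Omega)$. For the Sobolev-type spaces I would use the isometric embedding $u\mapsto(u,\nabla u)$ of $W^{1,\mathcal{H}}(\Omega)$ into the reflexive product space $L^{\mathcal{H}}(\Omega)\times L^{\mathcal{H}}(\Omega;\R^N)$; completeness of $L^{\mathcal{H}}$ gives closedness of its image via a standard Cauchy argument, so $\Wp{\mathcal{H}}$ is reflexive, and the same is inherited by its closed subspace $\Wpzero{\mathcal{H}}$.

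Third, for the Radon-Riesz property suppose $u_n\weak u$ in $L^{\mathcal{H}}(\Omega)$ and $\rho_{\mathcal{H}}(u_n)\to \rho_{\mathcal{H}}(u)$. Setting $v_n=(u_n+u)/2$, the convexity of $\mathcal{H}(x,\cdot)$ yields
\[
\rho_{\mathcal{H}}(v_n)\le \tfrac12\bigl(\rho_{\mathcal{H}}(u_n)+\rho_{\mathcal{H}}(u)\bigr)\longrightarrow \rho_{\mathcal{H}}(u),
\]
while $v_n\weak u$ together with weak lower semicontinuity of the convex modular $\rho_{\mathcal{H}}$ gives the matching lower bound, so $\rho_{\mathcal{H}}(v_n)\to \rho_{\mathcal{H}}(u)$. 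Using the pointwise uniform-convexity dichotomy from step one and integrating over $\Omega$, one deduces $\rho_{\mathcal{H}}((u_n-u)/2)\to 0$. By Proposition \ref{prop_delta_two_and_modular}(i)--(ii) and the $\Delta_2$-condition, modular convergence of $(u_n-u)/2$ to zero is equivalent to $\|u_n-u\|_{\mathcal{H}}\to 0$, which concludes the argument.

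The main obstacle is the pointwise-to-integral translation in step three: the uniform-convexity estimate comes in an \emph{either/or} form, so one must split $\Omega$ into the measurable sets where each alternative holds and check that on the ``small difference'' set $\{|u_n-u|\le \eps\max(|u_n|,|u|)\}$ the contribution to $\rho_{\mathcal{H}}((u_n-u)/2)$ vanishes as $\eps\to 0$, using the $\Delta_2$-condition to control the modular of the truncated difference. Everything else either invokes Milman--Pettis or is a direct consequence of the classical Musielak--Orlicz machinery.
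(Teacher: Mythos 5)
Your proposal is correct and follows essentially the same route as the paper: both rest on the pointwise uniform convexity of $\mathcal{H}(x,\cdot)$ with a modulus independent of $x$ (your weighted-sum observation with modulus $\min\{\delta_p,\delta_q\}$ is exactly the paper's computation), followed by the standard transfer to uniform convexity of the Luxemburg norm and Milman--Pettis. The only difference is that where the paper cites \cite{Diening-Harjulehto-Hasto-Ruzicka-2011} (Theorems 2.4.11 and 2.4.14, Lemma 2.4.17 and Remark 2.4.19) and Theorem \ref{prop_complete2} for the norm uniform convexity, the modular Radon--Riesz property and the reflexivity of the Sobolev spaces, you sketch those arguments directly (the midpoint/weak lower semicontinuity argument with the $\varepsilon$-splitting of $\Omega$, and the product-space isometric embedding), which is precisely the content of the cited results.
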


\begin{proof}
	First note that by Propositions \ref{prop_complete} and \ref{prop_complete2} we know that $\Lp{\mathcal{H}}$, $\Wp{\mathcal{H}}$ and $\Wpzero{\mathcal{H}}$ are complete. For the first part we only need to show that $\Lp{\mathcal{H}}$ is uniformly convex, then $\Lp{\mathcal{H}}$ is reflexive by the Milman-Pettis theorem, see, for example, Papageorgiou-Winkert \cite[Theorem 3.4.28]{Papageorgiou-Winkert-2018}. Applying Proposition \ref{prop_complete2} then shows that $\Wp{\mathcal{H}}$ and $\Wpzero{\mathcal{H}}$ are reflexive as well. 
	
	In order to prove the uniform convexity of the space $\Lp{\mathcal{H}}$,
	it is enough to show that the $N$-function $\mathcal{H}$ is uniformly convex, see Diening-Harjulehto-H\"{a}st\"{o}-R$\mathring{\text{u}}$\v{z}i\v{c}ka \cite[Definition 2.4.5, Theorems 2.4.11 and 2.4.14]{Diening-Harjulehto-Hasto-Ruzicka-2011}. Furthermore, the second assertion also follows by the results in Diening-Harjulehto-H\"{a}st\"{o}-R$\mathring{\text{u}}$\v{z}i\v{c}ka \cite[Lemma 2.4.17 and Remark 2.4.19]{Diening-Harjulehto-Hasto-Ruzicka-2011}
	
	To this end, let $\eps>0$ and let $t,s\ge 0$ be such that $|t-s|>\eps\max\{t,s\}$.
	Since the function $t\mapsto t^{\ell}$ is uniformly convex whenever $\ell>1$, see Diening-Harjulehto-H\"{a}st\"{o}-R$\mathring{\text{u}}$\v{z}i\v{c}ka \cite[Remark 2.4.6]{Diening-Harjulehto-Hasto-Ruzicka-2011},
	there exists $\delta_p=\delta_p(\eps,p_-)>0$ such that
	\begin{align*}
		\l(\frac{t+s}{2}\r)^{p_-}\le (1-\delta_p)\frac{t^{p_-}+s^{p_-}}{2}.
	\end{align*}
	Thus, using also the convexity of $t\longmapsto t^{\frac{p(x)}{p_-}}$ for $x\in\Omega$, we get
	\begin{align*}
		\l(\frac{t+s}{2}\r)^{p(x)}\le \l((1-\delta_p)\frac{t^{p_-}+s^{p_-}}{2}\r)^{\frac{p(x)}{p_-}}
		\le (1-\delta_p)\frac{t^{p(x)}+s^{p(x)}}{2}.
	\end{align*}
	Analogously we have
	\begin{align*}
		\l(\frac{t+s}{2}\r)^{q(x)}
		\le (1-\delta_q)\frac{t^{q(x)}+s^{q(x)}}{2}
	\end{align*}
	for some $\delta_q=\delta_q(\eps,q_-)>0$. Finally, we reach
	\begin{align*}
		&\l(\frac{t+s}{2}\r)^{p(x)}+\mu(x)\l(\frac{t+s}{2}\r)^{q(x)}\\
		&\le (1-\min\{\delta_p,\delta_q\})\frac{t^{p(x)}+\mu(x)t^{q(x)}+s^{p(x)}+\mu(x)s^{q(x)}}{2}.
	\end{align*}
	This completes the proof of the uniform convexity of $\Lp{\mathcal{H}}$. 
\end{proof}

Next, we want to check the relation between the modular $\rho_{\mathcal{H}}$ and its norm $\|\cdot\|_\mathcal{H}$, see also Harjulehto-H\"{a}st\"{o} \cite{Harjulehto-Hasto-2019}.

\begin{proposition}\label{proposition_modular_properties}
	Let hypotheses \eqref{condition_1} be satisfied and let $\rho_{\mathcal{H}}$ be defined by
	\begin{align*}
		\rho_{\mathcal{H}}(u) = \into \l(|u|^{p(x)}+\mu(x)|u|^{q(x)}\r)\diff x\quad\text{for all } u \in \Lp{\mathcal{H}}.
	\end{align*}
	\begin{enumerate}
		\item[\textnormal{(i)}]
			If $u\neq 0$, then $\|u\|_{\mathcal{H}}=\lambda$ if and only if $ \rho_{\mathcal{H}}(\frac{u}{\lambda})=1$;
		\item[\textnormal{(ii)}]
			$\|u\|_{\mathcal{H}}<1$ (resp.\,$>1$, $=1$) if and only if $ \rho_{\mathcal{H}}(u)<1$ (resp.\,$>1$, $=1$);
		\item[\textnormal{(iii)}]
			If $\|u\|_{\mathcal{H}}<1$, then $\|u\|_{\mathcal{H}}^{q_+}\leqslant \rho_{\mathcal{H}}(u)\leqslant\|u\|_{\mathcal{H}}^{p_-}$;
		\item[\textnormal{(iv)}]
			If $\|u\|_{\mathcal{H}}>1$, then $\|u\|_{\mathcal{H}}^{p_-}\leqslant \rho_{\mathcal{H}}(u)\leqslant\|u\|_{\mathcal{H}}^{q_+}$;
		\item[\textnormal{(v)}]
			$\|u\|_{\mathcal{H}}\to 0$ if and only if $ \rho_{\mathcal{H}}(u)\to 0$;
		\item[\textnormal{(vi)}]
			$\|u\|_{\mathcal{H}}\to +\infty$ if and only if $ \rho_{\mathcal{H}}(u)\to +\infty$.
		\item[\textnormal{(vii)}]
			$\|u\|_{\mathcal{H}}\to 1$ if and only if $ \rho_{\mathcal{H}}(u)\to 1$.
		\item[\textnormal{(viii)}]
			If $u_n \to u$ in $\Lp{\mathcal{H}}$, then $\rho_{\mathcal{H}} (u_n) \to \rho_{\mathcal{H}} (u)$.
	\end{enumerate}
\end{proposition}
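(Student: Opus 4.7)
The plan is to chain together three ingredients already at hand: the $\Delta_2$-estimate $\mathcal{H}(x,2t)\le 2^{q_+}\mathcal{H}(x,t)$ from \eqref{Delta2-condition}, the unit ball property in Proposition~\ref{prop_delta_two_and_modular}(ii), and the pointwise comparisons coming from the constant envelopes $p_-\le p(x)\le q(x)\le q_+$, namely $\lambda^{-q_+}\le\lambda^{-p(x)},\lambda^{-q(x)}\le\lambda^{-p_-}$ for $\lambda\ge 1$, with the reverse inequalities for $0<\lambda<1$.

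For (i), the map $\lambda\mapsto\rho_{\mathcal{H}}(u/\lambda)$ is continuous on $(0,\infty)$ by dominated convergence (using the $\Delta_2$-estimate to produce a dominating function) and strictly decreasing from $+\infty$ at $\lambda\to 0^+$ to $0$ at $\lambda\to\infty$ whenever $u\not\equiv 0$; thus the infimum defining $\|u\|_{\mathcal{H}}$ is uniquely attained at the $\lambda$ solving $\rho_{\mathcal{H}}(u/\lambda)=1$. Item (ii) is a direct restatement of Proposition~\ref{prop_delta_two_and_modular}(ii) applied to $\mathcal{H}$, which has already been checked to satisfy $\Delta_2$. For (iii) and (iv), I would set $\lambda:=\|u\|_{\mathcal{H}}$, so that $\rho_{\mathcal{H}}(u/\lambda)=1$ by (i), and integrate the pointwise bounds above against $|u|^{p(x)}$ and $\mu(x)|u|^{q(x)}$. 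In the regime $\lambda>1$ this yields
\begin{align*}
\lambda^{-q_+}\rho_{\mathcal{H}}(u)\;\le\;\rho_{\mathcal{H}}(u/\lambda)\;=\;1\;\le\;\lambda^{-p_-}\rho_{\mathcal{H}}(u),
\end{align*}
which rearranges to (iv); in the regime $\lambda<1$ the inequalities reverse and yield (iii).

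Claims (v), (vi), (vii) then follow at once by combining (ii) with (iii)--(iv). For instance, if $\rho_{\mathcal{H}}(u_n)\to 0$ then eventually $\rho_{\mathcal{H}}(u_n)<1$, hence $\|u_n\|_{\mathcal{H}}<1$ by (ii), and (iii) gives $\|u_n\|_{\mathcal{H}}^{q_+}\le\rho_{\mathcal{H}}(u_n)\to 0$; the reverse direction and the arguments for (vi) and (vii) are entirely symmetric, using (iv) to handle divergence to $+\infty$ and sandwiching with both (iii) and (iv) to handle the borderline $\to 1$ case.

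The only step with any real subtlety is (viii). Here the plan is to combine convergence in modular (already delivered by (v) applied to $u_n-u$) with the $\Delta_2$-condition in order to apply a Vitali-type convergence theorem. Two successive applications of $\Delta_2$ give a constant $C=C(q_+)$ with
\begin{align*}
\mathcal{H}(x,|u_n|)\;\le\;C\bigl(\mathcal{H}(x,|u_n-u|)+\mathcal{H}(x,|u|)\bigr)
\end{align*}
a.e.\ in $\Omega$, while $\rho_{\mathcal{H}}(u_n-u)\to 0$ from (v) provides the uniform integrability of the right-hand side. Convergence in $L^{\mathcal{H}}(\Omega)$ together with the continuous embedding into $L^{p_-}(\Omega)$ yields convergence in measure, hence pointwise a.e.\ along any subsequence. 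Vitali's theorem then gives $\rho_{\mathcal{H}}(u_n)\to\rho_{\mathcal{H}}(u)$ along every subsequence, hence for the full sequence. I expect this $\Delta_2$-plus-Vitali argument to be the main (though standard) obstacle, since (i)--(vii) reduce to bookkeeping once the $\Delta_2$-condition and the unit ball property are invoked.
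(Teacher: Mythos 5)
Your proposal is correct and follows essentially the same route as the paper: (i) via monotonicity and continuity of $\lambda\mapsto\rho_{\mathcal{H}}(u/\lambda)$, (iii)--(iv) from the pointwise comparisons $\lambda^{-q_+}\mathcal{H}(x,t)\le\mathcal{H}(x,t/\lambda)\le\lambda^{-p_-}\mathcal{H}(x,t)$ for $\lambda>1$ (reversed for $\lambda<1$) combined with $\rho_{\mathcal{H}}(u/\|u\|_{\mathcal{H}})=1$, (v)--(vii) by sandwiching, and (viii) by the $2^{q_+}$-type domination, uniform integrability, a.e.\ convergence along subsequences obtained from the $L^{p_-}$ (or $L^{p(\cdot)}$) embedding, Vitali's theorem, and the subsequence principle. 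The only cosmetic differences are that you quote the unit ball property for (ii) where the paper deduces it from (i), and your phrase ``a.e.\ along any subsequence'' should be read as ``a further a.e.-convergent subsequence of any subsequence,'' which is exactly how the subsequence principle is applied in the paper.
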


\begin{proof}
	\textnormal{(i)} First note that, for $u \in \Lp{\mathcal{H}}$, the function $\rho_{\mathcal{H}}(\lambda u)$ is continuous, convex and even in the variable $\lambda$ and it is strictly increasing when $\lambda \in [0,+\infty)$. So, by definition, we directly obtain
	\begin{align*}
		\|u\|_{\mathcal{H}}=\lambda \quad\text{if and only if}\quad \rho_{\mathcal{H}}\l(\frac{u}{\lambda}\r)=1,
	\end{align*}
	which proves \textnormal{(i)} and \textnormal{(ii)} follows from \textnormal{(i)}. Let us show \textnormal{(iii)}. For $u \in \Lp{\mathcal{H}}$ we have the inequalities
	\begin{align}\label{est_rho}
		\begin{split}
			& b^{p_-}\rho_{\mathcal{H}}(u) \leq \rho_{\mathcal{H}}(bu)\leq b^{q_+}\rho_{\mathcal{H}}(u) \quad\text{if }b>1,\\
			& b^{q_+}\rho_{\mathcal{H}}(u) \leq \rho_{\mathcal{H}}(bu)\leq b^{p_-}\rho_{\mathcal{H}}(u) \quad\text{if }0<b<1.
		\end{split}
	\end{align}
	Let $\|u\|_{\mathcal{H}}=\lambda$ with $0<\lambda<1$. Then, we have $\rho_{\mathcal{H}}\l(\frac{u}{\lambda}\r)=1$ from \textnormal{(i)}. Since $\frac{1}{\lambda}>1$ we can apply the first inequality in \eqref{est_rho} in order to obtain
	\begin{align*}
		\frac{\rho_{\mathcal{H}}(u)}{\lambda^{p_-}} \leq \rho_{\mathcal{H}}\l(\frac{u}{\lambda}\r)=1\leq
		\frac{\rho_{\mathcal{H}}(u)}{\lambda^{q_+}}.
	\end{align*}
	This shows \textnormal{(iii)}. The same argument can be used in order to  show \textnormal{(iv)} by using the second inequality in \eqref{est_rho}. Moreover, \textnormal{(v)} follows from \textnormal{(iii)}, \textnormal{(vi)} follows from \textnormal{(iv)} and \textnormal{(vii)} follows from \textnormal{(iii)} and \textnormal{(iv)}. Finally, when $u_n \to u$ in $\Lp{\mathcal{H}}$, by \textnormal{(v)} and as both addends are positive, it follows that $\varrho_{p(\cdot)} (u_n - u) \to 0$, hence by Proposition \ref{proposition_1} and the usual embeddings $\| u_n - u \|_{p_-} \to 0$, so $u_n \to u$ a.\,e.\,through a subsequence (still denoted by $u_n$). On the other hand, as
\begin{equation*}
	|u_n|^{p(x)}+\mu(x)|u_n|^{q(x)} 
	\leq 2^{q_+} \left( |u_n-u|^{p(x)} + |u|^{p(x)} + \mu(x) |u_n-u|^{q(x)} + \mu(x) |u|^{q(x)} \right)
\end{equation*}
and by \textnormal{(v)} there holds $\rho_{\mathcal{H}}(u_n - u)\to 0$, we know that $\left\lbrace |u_n|^{p(x)}+\mu(x)|u_n|^{q(x)} \right\rbrace_{n \in \N} $ is a uniformly integrable sequence, which furthermore converges a.\,e.\,to $|u|^{p(x)}+\mu(x)|u|^{q(x)}$ by the a.\,e.\,convergence of $u_n \to u$. By Vitali's Theorem (see Bogachev \cite[Theorem 4.5.4]{Bogachev-2007}) it follows that $\rho_{\mathcal{H}} (u_n) \to \rho_{\mathcal{H}} (u)$ through this subsequence. One can recover the whole sequence by the subsequence principle and this proves \textnormal{(viii)}.
\end{proof}

We now equip the space $\Wp{\mathcal{H}}$ with the equivalent norm
\begin{align*}
	\|u\|_{\hat{\rho}_\mathcal{H}}:=\inf&\left\{\lambda >0 :
	\int_\Omega
	\left[\l|\frac{\nabla u}{\lambda}\r|^{p(x)}+\mu(x)\l|\frac{\nabla u}{\lambda}\r|^{q(x)}+\l|\frac{u}{\lambda}\r|^{p(x)}+\mu(x)\l|\frac{u}{\lambda}\r|^{q(x)}\right]\;\diff x\le1\right\},
\end{align*}
where the modular $\hat{\rho}_\mathcal{H}$ is given by
\begin{align}\label{modular2}
	\hat{\rho}_\mathcal{H}(u) =\into \l(|\nabla u|^{p(x)}+\mu(x)|\nabla u|^{q(x)}\r)\diff x+\into \l(|u|^{p(x)}+\mu(x)|u|^{q(x)}\r)\diff x
\end{align}
for $u \in \Wp{\mathcal{H}}$.

The following proposition gives the relation between the norm $\|\cdot\|_{\hat{\rho}_\mathcal{H}}$ and the corresponding modular function $\hat{\rho}_\mathcal{H}$. The proof is similar to that one of Proposition \ref{proposition_modular_properties}.

\begin{proposition}\label{proposition_modular_properties2}
	Let hypotheses \eqref{condition_1} be satisfied, let $y\in\Wp{\mathcal{H}}$ and let $\hat{\rho}_{\mathcal{H}}$ be defined as in  \eqref{modular2}.
	\begin{enumerate}
		\item[\textnormal{(i)}]
			If $y\neq 0$, then $\|y\|_{\hat{\rho}_\mathcal{H}}=\lambda$ if and only if $ \hat{\rho}_{\mathcal{H}}(\frac{y}{\lambda})=1$;
		\item[\textnormal{(ii)}]
			$\|y\|_{\hat{\rho}_\mathcal{H}}<1$ (resp.\,$>1$, $=1$) if and only if $ \hat{\rho}_{\mathcal{H}}(y)<1$ (resp.\,$>1$, $=1$);
		\item[\textnormal{(iii)}]
			If $\|y\|_{\hat{\rho}_\mathcal{H}}<1$, then $\|y\|_{\hat{\rho}_\mathcal{H}}^{q_+}\leqslant \hat{\rho}_{\mathcal{H}}(y)\leqslant\|y\|_{\hat{\rho}_\mathcal{H}}^{p_-}$;
		\item[\textnormal{(iv)}]
			If $\|y\|_{\hat{\rho}_\mathcal{H}}>1$, then $\|y\|_{\hat{\rho}_\mathcal{H}}^{p_-}\leqslant \hat{\rho}_{\mathcal{H}}(y)\leqslant\|y\|_{\hat{\rho}_\mathcal{H}}^{q_+}$;
		\item[\textnormal{(v)}]
			$\|y\|_{\hat{\rho}_\mathcal{H}}\to 0$ if and only if $ \hat{\rho}_{\mathcal{H}}(y)\to 0$;
		\item[\textnormal{(vi)}]
			$\|y\|_{\hat{\rho}_\mathcal{H}}\to +\infty$ if and only if $ \hat{\rho}_{\mathcal{H}}(y)\to +\infty$.
		\item[\textnormal{(vii)}]
			$\|y\|_{\hat{\rho}_\mathcal{H}}\to 1$ if and only if $ \hat{\rho}_{\mathcal{H}}(y)\to 1$.
		\item[\textnormal{(viii)}]
			If $u_n \to u$ in $\Wp{\mathcal{H}}$, then $\hat{\rho}_\mathcal{H} (u_n) \to \hat{\rho}_\mathcal{H} (u)$.
	\end{enumerate}
\end{proposition}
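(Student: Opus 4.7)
The plan is to mirror the proof of Proposition \ref{proposition_modular_properties} step by step, replacing the modular $\rho_\mathcal{H}$ by $\hat{\rho}_\mathcal{H}$ and exploiting the fact that $\hat{\rho}_\mathcal{H}$ is built from the same homogeneous ingredients $|\cdot|^{p(x)}$ and $\mu(x)|\cdot|^{q(x)}$, just integrated against $u$ and $\nabla u$ simultaneously. In particular, for any $y\in\Wp{\mathcal{H}}$ with $y\neq 0$, the map $\lambda\mapsto \hat{\rho}_\mathcal{H}(\lambda y)$ is continuous, convex, even, and strictly increasing on $[0,\infty)$. Combined with the definition of $\|\cdot\|_{\hat{\rho}_\mathcal{H}}$, this gives \textnormal{(i)} at once, and \textnormal{(ii)} is an immediate consequence of \textnormal{(i)} by monotonicity.

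The heart of the proof is establishing the scaling inequalities
\begin{align*}
	b^{p_-}\hat{\rho}_\mathcal{H}(y)\le \hat{\rho}_\mathcal{H}(by)\le b^{q_+}\hat{\rho}_\mathcal{H}(y) \quad\text{for }b>1,\\
	b^{q_+}\hat{\rho}_\mathcal{H}(y)\le \hat{\rho}_\mathcal{H}(by)\le b^{p_-}\hat{\rho}_\mathcal{H}(y)\quad\text{for }0<b<1,
\end{align*}
which I would derive pointwise from $b^{p_-}t^{r(x)}\le b^{r(x)}t^{r(x)}\le b^{q_+}t^{r(x)}$ for $r\in\{p,q\}$ when $b>1$ (and the reverse when $0<b<1$), since $p_-\le p(x)\le q(x)\le q_+$; summing the four terms in \eqref{modular2} preserves the inequalities. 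Once these are in hand, \textnormal{(iii)} and \textnormal{(iv)} follow by the same trick as before: from \textnormal{(i)} one has $\hat{\rho}_\mathcal{H}(y/\lambda)=1$ with $\lambda=\|y\|_{\hat{\rho}_\mathcal{H}}$, and applying the appropriate inequality with $b=1/\lambda$ yields the two-sided bounds. Then \textnormal{(v)}, \textnormal{(vi)}, and \textnormal{(vii)} are straightforward corollaries of \textnormal{(iii)} and \textnormal{(iv)} (for sequences $y_n$ eventually in the appropriate range of the norm).

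For \textnormal{(viii)}, the most direct route is to observe that convergence $u_n\to u$ in $\Wp{\mathcal{H}}$ means $u_n\to u$ and $\nabla u_n\to \nabla u$ in $\Lp{\mathcal{H}}$ (componentwise for the gradient), so Proposition \ref{proposition_modular_properties}\textnormal{(viii)} applied twice gives
\begin{align*}
	\into\l(|u_n|^{p(x)}+\mu(x)|u_n|^{q(x)}\r)\diff x &\to \into\l(|u|^{p(x)}+\mu(x)|u|^{q(x)}\r)\diff x,\\
	\into\l(|\nabla u_n|^{p(x)}+\mu(x)|\nabla u_n|^{q(x)}\r)\diff x &\to \into\l(|\nabla u|^{p(x)}+\mu(x)|\nabla u|^{q(x)}\r)\diff x,
\end{align*}
whose sum is exactly $\hat{\rho}_\mathcal{H}(u_n)\to\hat{\rho}_\mathcal{H}(u)$.

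I do not expect any genuine obstacle: every item is either a bookkeeping transcription of the corresponding item in Proposition \ref{proposition_modular_properties} or a direct reduction to it via the additive structure of $\hat{\rho}_\mathcal{H}$. The only place where one must be a little careful is checking that the scaling inequalities still hold when the four summands in $\hat{\rho}_\mathcal{H}$ are lumped together, but this is immediate because the exponents involved all lie in the interval $[p_-,q_+]$, which controls both the lower and upper bounds uniformly.
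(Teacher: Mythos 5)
Your proposal is correct and is exactly what the paper intends: the paper does not write out this proof at all, stating only that it is "similar to that one of Proposition \ref{proposition_modular_properties}", and your argument is precisely that transcription (scaling inequalities with exponents $p_-$, $q_+$ for the four-term modular, plus reduction of item (viii) to the $L^{\mathcal{H}}$ case for $u_n$ and $\nabla u_n$ separately). The only cosmetic point is that for the gradient term one should pass through $\bigl| |\nabla u_n|-|\nabla u| \bigr| \le |\nabla u_n-\nabla u|$ rather than arguing "componentwise", but this is immediate and does not affect the proof.
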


Moreover, this norm is a uniformly convex norm on this space and satisfies the Radon–Riesz (or Kadec-Klee) property with respect to the modular, as one can see in the following proposition. 

\begin{proposition} \label{proposition-W1H-unifconv}
	Let hypotheses \eqref{condition_1} be satisfied.
	\begin{enumerate}
		\item[\textnormal{(i)}]	The norm $\|  \cdot \|_{ \hat{\rho}_{\mathcal{H}} }$ on $\Wp{\mathcal{H}}$ is uniformly convex. 
		\item[\textnormal{(ii)}]	For any sequence $\{u_n\}_{n \in \N} \subseteq \Wp{\mathcal{H}}$ such that 
		\begin{equation*}
			u_n \weak u \text{ in }\Wp{\mathcal{H}}\quad\text{and}\quad
			\hat{\rho}_{\mathcal{H}} (u_n) \to \hat{\rho}_{\mathcal{H}} (u)
		\end{equation*}
		%$u_n \weak u$ in $\Wp{\mathcal{H}}$ and $\hat{\rho}_{\mathcal{H}} (u_n) \to \hat{\rho}_{\mathcal{H}} (u)$ 
		it holds that $u_n \to u$ in $\Wp{\mathcal{H}}$.
	\end{enumerate}
\end{proposition}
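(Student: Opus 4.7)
The plan is to mirror the scheme of Proposition \ref{proposition-reflexivity} at the level of the combined modular $\hat{\rho}_{\mathcal{H}}$, which differs from $\rho_{\mathcal{H}}$ only in that it integrates the $\mathcal{H}$-density of both $u$ and $|\nabla u|$, and then to derive (ii) from (i) via the standard Radon--Riesz property of uniformly convex spaces. Throughout I rely on the pointwise uniform convexity of $\mathcal{H}(x,\cdot)$ established in the proof of Proposition \ref{proposition-reflexivity}.

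For part (i) the main step is a modular uniform convexity estimate. Fix $\varepsilon>0$ and take $u,v\in\Wp{\mathcal{H}}$ with $\|u\|_{\hat{\rho}_\mathcal{H}}=\|v\|_{\hat{\rho}_\mathcal{H}}=1$ and $\|u-v\|_{\hat{\rho}_\mathcal{H}}>\varepsilon$. By Proposition \ref{proposition_modular_properties2}(i)--(iv) this gives $\hat{\rho}_\mathcal{H}(u)=\hat{\rho}_\mathcal{H}(v)=1$ and $\hat{\rho}_\mathcal{H}(u-v)\geq\eta(\varepsilon)>0$. I would split $\Omega$ into the set $A$ where either $\bigl||u|-|v|\bigr|>\varepsilon'\max\{|u|,|v|\}$ or $\bigl||\nabla u|-|\nabla v|\bigr|>\varepsilon'\max\{|\nabla u|,|\nabla v|\}$ (for a suitable $\varepsilon'=\varepsilon'(\varepsilon)$) and its complement. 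On the complement, convexity of $\mathcal{H}(x,\cdot)$ together with $|u+v|\le|u|+|v|$ and $|\nabla u+\nabla v|\le|\nabla u|+|\nabla v|$ yields only the trivial midpoint bound; on $A$, the uniform convexity of $\mathcal{H}$ improves at least one of the two $\mathcal{H}$-densities by a factor $1-\delta'$. Integrating and using that the $A$-modular-mass is bounded below in terms of $\eta$ yields $\hat{\rho}_\mathcal{H}\l(\tfrac{u+v}{2}\r)\le 1-\delta''$ with $\delta''=\delta''(\varepsilon)>0$, and Proposition \ref{proposition_modular_properties2}(iii) converts this into $\l\|\tfrac{u+v}{2}\r\|_{\hat{\rho}_\mathcal{H}}\le (1-\delta'')^{1/q_+}$, which is the desired uniform convexity with $\delta=1-(1-\delta'')^{1/q_+}$. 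An alternative route is to regard $u\mapsto(u,\nabla u)$ as a linear isometric embedding of $\Wp{\mathcal{H}}$ into $\Lp{\mathcal{H}}\times\Lpvalued{\mathcal{H}}$ endowed with the Luxemburg norm of the sum modular $(f,g)\mapsto\rho_\mathcal{H}(f)+\rho_\mathcal{H}(|g|)$, and to appeal to \cite[Theorems 2.4.11, 2.4.14]{Diening-Harjulehto-Hasto-Ruzicka-2011} directly after verifying uniform convexity for this product modular by summing the pointwise estimate on $\mathcal{H}$ coordinatewise.

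For part (ii), with (i) in hand the argument is short. Proposition \ref{proposition_modular_properties2} upgrades $\hat{\rho}_\mathcal{H}(u_n)\to\hat{\rho}_\mathcal{H}(u)$ to $\|u_n\|_{\hat{\rho}_\mathcal{H}}\to\|u\|_{\hat{\rho}_\mathcal{H}}$ (the case $u=0$ via (v); otherwise set $\lambda:=\|u\|_{\hat{\rho}_\mathcal{H}}$, test the modular at $u_n/\lambda$, and combine (i)--(iv) with (vii) to pass from modular limits to norm limits). In the uniformly convex Banach space $\Wp{\mathcal{H}}$, the combination $u_n\weak u$ and $\|u_n\|_{\hat{\rho}_\mathcal{H}}\to\|u\|_{\hat{\rho}_\mathcal{H}}$ forces $u_n\to u$ strongly by the classical Radon--Riesz property.

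The main obstacle is the bookkeeping in part (i): uniform convexity of $\mathcal{H}$ provides a pointwise improvement only when one of the two coordinate pairs $(|u|,|v|)$ or $(|\nabla u|,|\nabla v|)$ exhibits $\varepsilon'$-separation, so one must carefully track which coordinate dominates on which part of $\Omega$ and verify that the complement of $A$ cannot carry too much of the modular mass $\hat{\rho}_\mathcal{H}(u-v)\geq\eta$. Calibrating $\varepsilon'$ in terms of $\varepsilon$, $p_-$, and $q_+$, while exploiting the $\Delta_2$-growth \eqref{Delta2-condition} to compare $\mathcal{H}(x,|u-v|)$ with $\mathcal{H}(x,|u|)+\mathcal{H}(x,|v|)$, is what makes the argument close.
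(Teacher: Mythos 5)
Your reduction of part (ii) to the classical Radon--Riesz property has a genuine gap at its first step: you claim that $\hat{\rho}_{\mathcal{H}}(u_n)\to\hat{\rho}_{\mathcal{H}}(u)$ alone upgrades to $\|u_n\|_{\hat{\rho}_{\mathcal{H}}}\to\|u\|_{\hat{\rho}_{\mathcal{H}}}$, but for a non-homogeneous modular this is false, and the scaling trick you propose does not repair it. Setting $\lambda=\|u\|_{\hat{\rho}_{\mathcal{H}}}$, the estimates of Proposition \ref{proposition_modular_properties2} (or \eqref{est_rho}) only trap $\hat{\rho}_{\mathcal{H}}(u_n/\lambda)$ between $\hat{\rho}_{\mathcal{H}}(u_n)/\lambda^{q_+}$ and $\hat{\rho}_{\mathcal{H}}(u_n)/\lambda^{p_-}$ (say for $\lambda>1$), which does not force convergence to $1$ unless $p_-=q_+$; item (vii) is only applicable when the modulars tend to $1$. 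Concretely, already with $\mu\equiv 0$ and $p$ piecewise constant with values $2$ and $4$ on two subregions, two functions supported in the different regions can have equal modular but different Luxemburg norms, so modular limits cannot control norm limits without using the weak convergence at this very step. This non-homogeneity is exactly what the paper's proof is designed to handle: it invokes Theorem 3.5 of Fan-Guan \cite{Fan-Guan-2001}, whose hypotheses require uniform convexity of the norms generated by \emph{all} rescaled modulars $c\mathcal{H}$, $c>0$. A direct repair along your lines would instead mimic the second assertion of Proposition \ref{proposition-reflexivity}: by convexity and weak lower semicontinuity of $\hat{\rho}_{\mathcal{H}}$ one gets $\hat{\rho}_{\mathcal{H}}\l(\frac{u_n+u}{2}\r)\to\hat{\rho}_{\mathcal{H}}(u)$, and then uniform convexity of the modular (Lemma 2.4.17 and Remark 2.4.19 in \cite{Diening-Harjulehto-Hasto-Ruzicka-2011}) yields $\hat{\rho}_{\mathcal{H}}\l(\frac{u_n-u}{2}\r)\to 0$, hence $u_n\to u$ by Proposition \ref{proposition_modular_properties2} (v).

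Concerning part (i), your alternative route (the isometry $u\mapsto(u,\nabla u)$ into the $\R^{1+N}$-valued Musielak--Orlicz space with the sum modular, combined with the uniform convexity theorems of \cite{Diening-Harjulehto-Hasto-Ruzicka-2011}) is sound and is essentially the paper's argument, which cites Theorem 3.2 of Fan-Guan \cite{Fan-Guan-2001}. Your primary route, however, is also gapped where you assert that the set $A$, defined through separation of the scalar quantities $\bigl||u|-|v|\bigr|$ and $\bigl||\nabla u|-|\nabla v|\bigr|$, must carry a definite share of $\hat{\rho}_{\mathcal{H}}(u-v)$: cancellation defeats this, e.g.\ $v=-u$ gives $A=\emptyset$ while $\hat{\rho}_{\mathcal{H}}(u-v)\geq 2^{p_-}$. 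What is needed is the vector-valued separation $|\xi-\eta|>\eps\max\{|\xi|,|\eta|\}$, i.e.\ uniform convexity of $\xi\mapsto\mathcal{H}(x,|\xi|)$ on $\R^N$, which is precisely what the cited results of Diening et al.\ and Fan-Guan supply; so you should rely on that formulation rather than on the scalar decomposition.
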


\begin{proof}
	Both results follow by Theorems 3.2, 3.5 of Fan-Guan \cite{Fan-Guan-2001}. First note that their condition $\Delta_{2,\delta(x)}$ is a more general version of our $\Delta_2$-condition. Moreover (UC)$_1$ is exactly the same as the uniform convexity from Definition \ref{definition-uniform-convexity}, by taking $t = \beta s$ with $s\geq t$, the left-hand side condition from Definition \ref{definition-uniform-convexity} is equivalent to $\beta < 1 - \eps$ for $\eps < 1$, hence one can take $\sigma(1-\eps) = \delta$. In the proof of Proposition \ref{proposition-reflexivity} we already verified these properties for $\mathcal{H}$, so \textnormal{(i)} follows by Theorem 3.2 of Fan-Guan \cite{Fan-Guan-2001}. Regarding the assumptions (Q), take $Y = \Wp{\mathcal{H}} = X$ and the modular $\hat{\rho}_{\mathcal{H}}$, then we already know that (Q$_1$)-(Q$_7$) hold in Proposition \ref{proposition_modular_properties2}. Fix now any $c > 0$. Then the $N$-function given by $c\mathcal{H}$ is uniformly convex and $\Wp{c\mathcal{H}} = \Wp{\mathcal{H}}$ as sets, so again by Theorem 3.2 of Fan-Guan \cite{Fan-Guan-2001} the norm $\| \cdot \|_{\hat{\rho}_{c\mathcal{H}}}$ on $\Wp{\mathcal{H}}$ is uniformly convex. Note also that $\hat{\rho}_{\mathcal{H}} (u) \to 0$ if and only if $\hat{\rho}_{c\mathcal{H}} (u) \to 0$, so they generate the same topology. It is straightforward that (Q$_1$) and (Q$_2$) hold for $c\mathcal{H}$, and (Q$_3$)-(Q$_7$) follow doing an analogous argument to Proposition \ref{proposition_modular_properties2}. Thus \textnormal{(ii)} follows from Theorem 3.5 of Fan-Guan \cite{Fan-Guan-2001}. 
\end{proof}

Now we introduce the seminormed space
\begin{align*}
	L^{q(\cdot)}_\mu(\Omega)=\left \{u\in M(\Omega) \,:\, \into \mu(x) | u|^{q(x)} \diff x< +\infty \right \}
\end{align*}
and endow it with the seminorm
\begin{align*}
	\|u\|_{q(\cdot),\mu} =\inf \left \{ \tau >0 \,:\, \into \mu(x) \l(\frac{|u|}{\tau}\r)^{q(x)} \diff x  \leq 1  \right \}.
\end{align*}

We have the following embedding results, see Proposition 2.15 of Colasuonno-Squassina \cite{Colasuonno-Squassina-2016} for the constant exponent case.

\begin{proposition}\label{proposition_embeddings}
	Let hypotheses \eqref{condition_1} be satisfied and let
	\begin{align}\label{critical_exponent}
		p^*(x):=\frac{Np(x)}{N-p(x)} \quad \text{and}\quad p_*(x):=\frac{(N-1)p(x)}{N-p(x)}
	\quad\text{for all }x\in\close
	\end{align}
	be the critical exponents to $p$. Then the following embeddings hold:
	\begin{enumerate}
		\item[\textnormal{(i)}]
			$\Lp{\mathcal{H}} \hookrightarrow \Lp{r(\cdot)}$, $\Wp{\mathcal{H}}\hookrightarrow \Wp{r(\cdot)}$, $\Wpzero{\mathcal{H}}\hookrightarrow \Wpzero{r(\cdot)}$
            are continuous for all $r\in C(\close)$ with $1\leq r(x)\leq p(x)$ for all $x \in \Omega$;
		\item[\textnormal{(ii)}]
			if $p \in C_+(\close) \cap C^{0, \frac{1}{|\log t|}}(\close)$, then $\Wp{\mathcal{H}} \hookrightarrow \Lp{r(\cdot)}$ and $\Wpzero{\mathcal{H}} \hookrightarrow \Lp{r(\cdot)}$ are continuous for $r \in C(\close)$ with $ 1 \leq r(x) \leq p^*(x)$ for all $x\in \close$;
		\item[\textnormal{(iii)}]
			$\Wp{\mathcal{H}} \hookrightarrow \Lp{r(\cdot)}$ and $\Wpzero{\mathcal{H}} \hookrightarrow \Lp{r(\cdot)}$ are compact for $r \in C(\close) $ with $ 1 \leq r(x) < p^*(x)$ for all $x\in \close$;
		\item[\textnormal{(iv)}]
			if $p \in C_+(\close) \cap W^{1,\gamma}(\Omega)$ for some $\gamma>N$, then $\Wp{\mathcal{H}} \hookrightarrow \Lprand{r(\cdot)}$ and $\Wpzero{\mathcal{H}} \hookrightarrow \Lprand{r(\cdot)}$ are continuous for $r \in C(\close)$ with $ 1 \leq r(x) \leq p_*(x)$ for all $x\in \close$;
		\item[\textnormal{(v)}]
			$\Wp{\mathcal{H}} \hookrightarrow \Lprand{r(\cdot)}$ and $\Wpzero{\mathcal{H}} \hookrightarrow \Lprand{r(\cdot)}$ are compact for $r \in C(\close) $ with $ 1 \leq r(x) < p_*(x)$ for all $x\in \close$;
		\item[\textnormal{(vi)}]
			$\Lp{\mathcal{H}} \hookrightarrow L^{q(\cdot)}_\mu(\Omega)$ is continuous;
		\item[\textnormal{(vii)}]
			if $\mu \in \Linf$, then $L^{q(\cdot)}(\Omega) \hookrightarrow \Lp{\mathcal{H}}$ is continuous.
	\end{enumerate}
\end{proposition}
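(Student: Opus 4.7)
The plan is to reduce all seven embeddings to two building blocks: the pointwise inequality $t^{p(x)} \le \mathcal{H}(x,t) = t^{p(x)}+\mu(x)t^{q(x)}$, which via Proposition \ref{prop_complete}\textnormal{(ii)} yields the fundamental continuous inclusion $\Lp{\mathcal{H}} \hookrightarrow \Lp{p(\cdot)}$, and the already established embedding theory for variable exponent Lebesgue and Sobolev spaces (Propositions \ref{embedding_critical} and \ref{embedding_critical_boundary}), together with the standard monotonicity $\Lp{p(\cdot)} \hookrightarrow \Lp{r(\cdot)}$ for $r(x) \le p(x)$ on the bounded domain $\Omega$.

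For \textnormal{(i)} I would apply Proposition \ref{prop_complete}\textnormal{(ii)} to the pair $\ph(x,t) = t^{p(x)}$ and $\psi(x,t) = \mathcal{H}(x,t)$ (both locally integrable, the latter since $\mu \in \Lp{1}$), the domination $\ph \prec \psi$ being realized by $C_1 = C_2 = 1$ and $h \equiv 0$. This gives $\Lp{\mathcal{H}} \hookrightarrow \Lp{p(\cdot)}$, and the classical $\Lp{p(\cdot)} \hookrightarrow \Lp{r(\cdot)}$ for $r \le p$ completes the $\Lp{\cdot}$-part. Applying the same reasoning to $\nabla u$ yields $\Wp{\mathcal{H}} \hookrightarrow \Wp{p(\cdot)} \hookrightarrow \Wp{r(\cdot)}$, and the $W^{1,\cdot}_0$-statement then follows from the density characterisation of these spaces. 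Parts \textnormal{(ii)}--\textnormal{(v)} then come by composing $\Wp{\mathcal{H}} \hookrightarrow \Wp{p(\cdot)}$ with Proposition \ref{embedding_critical} (for \textnormal{(ii)}, \textnormal{(iii)}) or Proposition \ref{embedding_critical_boundary} (for \textnormal{(iv)}, \textnormal{(v)}), both applicable under the regularity hypotheses on $p$ stated in the respective items; compactness is preserved under composition with a continuous embedding, which handles \textnormal{(iii)} and \textnormal{(v)}.

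For \textnormal{(vi)} I would argue directly at the modular level: if $u \neq 0$ and $\lambda = \|u\|_\mathcal{H}$, then Proposition \ref{proposition_modular_properties}\textnormal{(i)} gives $\rho_\mathcal{H}(u/\lambda) = 1$, so a fortiori $\into \mu(x)(|u|/\lambda)^{q(x)}\diff x \le 1$, which by definition of the seminorm means $\|u\|_{q(\cdot),\mu} \le \lambda = \|u\|_\mathcal{H}$. For \textnormal{(vii)}, the extra hypothesis $\mu \in \Linf$ gives the dominance $\mathcal{H}(x,t) \le (1+\|\mu\|_\infty) t^{q(x)} + (1+\|\mu\|_\infty)$, obtained by splitting into $t \ge 1$ (where $t^{p(x)} \le t^{q(x)}$ since $p(x) \le q(x)$) and $0 \le t < 1$ (where $t^{p(x)} \le 1$ and $\mu(x)t^{q(x)} \le \|\mu\|_\infty$). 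Since constants lie in $\Lp{1}$ thanks to boundedness of $\Omega$, this means $\mathcal{H} \prec t^{q(\cdot)}$ in the sense of Definition \ref{def_phi-function}\textnormal{(v)}, and Proposition \ref{prop_complete}\textnormal{(ii)} delivers $L^{q(\cdot)}(\Omega) \hookrightarrow \Lp{\mathcal{H}}$.

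I do not expect any serious obstacle. The only delicate point is that the pointwise inequality $t^{p(x)} \le \mathcal{H}(x,t)$ immediately compares only \emph{modulars}, whereas what we need is a comparison of \emph{norms}; this is precisely what Proposition \ref{prop_complete}\textnormal{(ii)} packages, and using it rather than a direct rescaling argument sidesteps the usual inhomogeneity subtleties of variable exponent Luxemburg norms. Part \textnormal{(vii)} is the only place where the boundedness hypothesis $\mu \in \Linf$ is required, and it enters exactly through the constant $\|\mu\|_\infty$ in the dominance, since without it no single $C_1$ would bound $\mu(x)t^{q(x)}$ in terms of $t^{q(x)}$ uniformly in $x$.
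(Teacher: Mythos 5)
Your proposal is correct and follows essentially the same route as the paper: reduce everything to $\Lp{\mathcal{H}}\hookrightarrow\Lp{p(\cdot)}$ via Proposition \ref{prop_complete}\textnormal{(ii)} and the classical variable exponent embeddings for (i)--(v), argue (vi) directly at the modular level using the unit ball property, and obtain (vii) from the domination $\mathcal{H}(x,t)\lesssim 1+t^{q(x)}$ made possible by $\mu\in\Linf$. No gaps to report.
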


\begin{proof}
	We take $\mathcal{H}_{p(\cdot)}(x,t)=t^{p(x)}$ for all $t\geq 0$ and for all $x\in\close$. It is easy to see that $\mathcal{H}_{p(\cdot)}  \prec \mathcal{H}$, see Definition \ref{def_phi-function} \textnormal{(v)}. Hence, from Proposition \ref{prop_complete} \textnormal{(ii)} we obtain that $\Lp{\mathcal{H}} \hookrightarrow \Lp{p(\cdot)}$ and $\Wp{\mathcal{H}} 
\hookrightarrow \Wp{p(\cdot)}$ continuously, and by definition it follows 
that $\Wpzero{\mathcal{H}} \hookrightarrow \Wpzero{p(\cdot)}$ continuously. Thus, assertion \textnormal{(i)} is a direct consequence of the classical embedding results for variable Lebesgue and Sobolev spaces due to the 
boundedness of $\Omega$. The same arguments show \textnormal{(ii)--(v)}, see also Propositions \ref{embedding_critical} and \ref{embedding_critical_boundary}. Let us prove \textnormal{(vi)}. To this end, let $u \in \Lp{\mathcal{H}}$, then we have
	\begin{align*}
		\into \mu(x) |u|^{q(x)}\diff x \leq \into \l(|u|^{p(x)}+\mu(x)|u|^{q(x)}\r)\diff x = \rho_{\mathcal{H}}(u),
	\end{align*}
	see \eqref{modular_orlicz}. Since $\rho_{\mathcal{H}}\l(\frac{u}{\|u\|_\mathcal{H}}\r)=1$ whenever $u\neq 0$, we obtain for $u \neq 0$
	\begin{align*}
		\into \mu(x) \l(\frac{u}{\|u\|_\mathcal{H}}\r)^{q(x)}\diff x \leq 1.
	\end{align*}
	Thus
	\begin{align*}
		\|u\|_{q(\cdot),\mu} \leq \|u\|_\mathcal{H}.
	\end{align*}
	Finally, assertion \textnormal{(vii)} follows from the estimate
	\begin{align*}
		\mathcal{H}(x,t)\leq \l(1+t^{q(x)}\r) + \mu(x) t^{q(x)} \leq 1+ \l(1+\|\mu\|_\infty\r) t^{q(x)}
	\end{align*}
	for all $t \geq 0$ and for a.\,a.\,$x \in \Omega$ by applying again Proposition \ref{prop_complete} \textnormal{(ii)}.
\end{proof}

A useful property for existence results is the fact that a space is closed with respect to truncations. We prove this property for $\Wp{\mathcal{H}}$ and $\Wpzero{\mathcal{H}}$ in the following proposition.
For any $s \in \R$ we denote $s^\pm = \max \{ \pm s, 0 \}$, that means $s = s^+ - s^-$ and $|s| = s^+ + s^-$. For any function $v \colon \Omega \to \R$ we denote $v^\pm (\cdot) = [v(\cdot)]^\pm$.

\begin{proposition}
	Let \eqref{condition_1} be satisfied, then the following hold:
	\begin{enumerate}
		\item[\textnormal{(i)}] if $u \in \Wp{\mathcal{H}}$, then $\pm u^\pm \in \Wp{\mathcal{H}}$ with $\nabla (\pm u ^\pm) = \nabla u 1_{\{\pm u > 0\}}$;
		\item[\textnormal{(ii)}] if $u_n \to u$ in $\Wp{\mathcal{H}}$, then $\pm u_n^\pm \to \pm u^\pm$ in $\Wp{\mathcal{H}}$;
		\item[\textnormal{(iii)}] if $\mu \in \Lp{\infty}$ and $u \in \Wpzero{\mathcal{H}}$, then $\pm u^\pm \in \Wpzero{\mathcal{H}}$.
	\end{enumerate} 
\end{proposition}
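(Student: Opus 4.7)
For \textnormal{(i)}, I would reduce to the classical Sobolev truncation result in $W^{1,1}(\Omega)$. The chain $\Wp{\mathcal{H}}\hookrightarrow\Wp{p(\cdot)}\hookrightarrow W^{1,1}(\Omega)$ (Proposition \ref{proposition_embeddings}\textnormal{(i)} applied with $r=p$, combined with $p_->1$ and the boundedness of $\Omega$) places $u$ in $W^{1,1}(\Omega)$, so Stampacchia's truncation rule yields $\pm u^{\pm}\in W^{1,1}(\Omega)$ with $\nabla(\pm u^{\pm})=\nabla u\,1_{\{\pm u>0\}}$. The pointwise bounds $|\pm u^{\pm}|\leq|u|$ and $|\nabla(\pm u^{\pm})|\leq|\nabla u|$ together with the monotonicity of $\mathcal{H}(x,\cdot)$ give $\hat{\rho}_{\mathcal{H}}(\pm u^{\pm})\leq\hat{\rho}_{\mathcal{H}}(u)<\infty$, hence $\pm u^{\pm}\in\Wp{\mathcal{H}}$.

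For \textnormal{(ii)}, I would use that norm convergence in $\Wp{\mathcal{H}}$ is equivalent to modular convergence $\hat{\rho}_{\mathcal{H}}(u_n-u)\to 0$ by Proposition \ref{proposition_modular_properties2}\textnormal{(v)}. The function part is immediate from $|u_n^{+}-u^{+}|\leq|u_n-u|$. For the gradient, I would use the decomposition
\begin{equation*}
	\nabla u_n^{+}-\nabla u^{+}=(\nabla u_n-\nabla u)\,1_{\{u_n>0\}}+\nabla u\,\bigl(1_{\{u_n>0\}}-1_{\{u>0\}}\bigr).
\end{equation*}
The first summand is controlled pointwise by $|\nabla(u_n-u)|$, so its $\rho_{\mathcal{H}}$-modular tends to zero. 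For the second, I would extract a subsequence along which $u_n\to u$ a.e.\ (via the continuous chain $\Wp{\mathcal{H}}\hookrightarrow\Lp{p(\cdot)}\hookrightarrow L^{p_-}(\Omega)$); then $1_{\{u_n>0\}}\to 1_{\{u>0\}}$ a.e.\ on $\{u\neq 0\}$ while $\nabla u=0$ a.e.\ on $\{u=0\}$, so the integrand tends to zero pointwise and is dominated by $|\nabla u|^{p(x)}+\mu(x)|\nabla u|^{q(x)}\in L^{1}(\Omega)$. Lebesgue's theorem and the subsequence principle then recover modular convergence for the full sequence; the statement for $-u_n^{-}$ follows by applying the same argument to $-u_n\to -u$.

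For \textnormal{(iii)}, I would start from $\varphi_n\in C_0^{\infty}(\Omega)$ with $\varphi_n\to u$ in $\Wp{\mathcal{H}}$, so that part \textnormal{(ii)} gives $\pm\varphi_n^{\pm}\to\pm u^{\pm}$ in $\Wp{\mathcal{H}}$. Each $\pm\varphi_n^{\pm}$ is Lipschitz with compact support in $\Omega$, so the standard mollification $\psi_{n,k}:=\rho_{1/k}\ast(\pm\varphi_n^{\pm})$ belongs to $C_0^{\infty}(\Omega)$ for $k$ large and satisfies $\psi_{n,k}\to\pm\varphi_n^{\pm}$ in $W^{1,q_+}(\Omega)$ as $k\to\infty$. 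The continuous chain $W^{1,q_+}(\Omega)\hookrightarrow W^{1,q(\cdot)}(\Omega)\hookrightarrow\Wp{\mathcal{H}}$, the last inclusion following by applying Proposition \ref{proposition_embeddings}\textnormal{(vii)} componentwise to function and gradient (using $\mu\in\Linf$), promotes this to convergence in $\Wp{\mathcal{H}}$. A diagonal extraction $\psi_{n,k_n}$ then yields a sequence in $C_0^{\infty}(\Omega)$ converging to $\pm u^{\pm}$ in $\Wp{\mathcal{H}}$, so $\pm u^{\pm}\in\Wpzero{\mathcal{H}}$.

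The main obstacle is the gradient convergence in \textnormal{(ii)}: the indicator cross term $\nabla u\,(1_{\{u_n>0\}}-1_{\{u>0\}})$ is not dominated by the ``nice'' error $|\nabla(u_n-u)|$, so one must pass through a pointwise-convergent subsequence, invoke Lebesgue's dominated convergence theorem in the modular, and then use the subsequence principle to restore modular convergence of the entire sequence. In \textnormal{(iii)} the role of $\mu\in\Linf$ is exactly to secure the Sobolev embedding $W^{1,q(\cdot)}(\Omega)\hookrightarrow\Wp{\mathcal{H}}$; without this hypothesis, mollifications of truncated test functions need not converge in the Musielak--Orlicz topology.
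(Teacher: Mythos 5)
Your proposal is correct and takes essentially the same approach as the paper: the classical truncation rule in a constant-exponent Sobolev space plus monotonicity of the modular for \textnormal{(i)}; the identical indicator-function decomposition of the gradient, handled via an a.e.-convergent subsequence, dominated convergence in the modular (using $\nabla u=0$ a.e.\ on $\{u=0\}$) and the subsequence principle for \textnormal{(ii)}; and mollification of the Lipschitz, compactly supported truncations with convergence in $W^{1,q_+}(\Omega)$ upgraded to $\Wp{\mathcal{H}}$ through Proposition \ref{proposition_embeddings} \textnormal{(vii)} for \textnormal{(iii)}. No gaps to report.
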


\begin{proof}
	\textnormal{(i)} It is a well-known fact that for $v \in \Wp{\ell}$, where $1 \leq \ell \leq \infty$, the statement holds and $\nabla (\pm v ^\pm) = \nabla v 1_{\{\pm v > 0\}}$, see for example Heinonen-Kilpeläinen-Martio \cite[Lemma 1.19]{HeinonenKilpelainenMartio2006}. Hence, by Proposition \ref{proposition_embeddings} \textnormal{(i)}, $u \in \Wp{p_-}$ and $\nabla (\pm u ^\pm) = \nabla u 1_{\{\pm u > 0\}}$ it follows that
	\begin{align*}
		\rho_{\mathcal{H}} (\pm u^\pm ) &\leq \rho_{\mathcal{H}} ( u ) < \infty,\\
		\rho_{\mathcal{H}} (\nabla [\pm u^\pm] ) &\leq \rho_{\mathcal{H}} ( \nabla u ) < \infty.
	\end{align*}
	
	\textnormal{(ii)} Consider a sequence such that $u_n \to u$ in $\Wp{\mathcal{H}}$. As $| \pm u_n^\pm \mp u^\pm | \leq | u_n - u | $ pointwisely in $\Omega$, it is straightforward that $\pm u_n ^\pm \to \pm u^\pm$ in $\Lp{\mathcal{H}}$ by proving the convergence in $\rho_{\mathcal{H}}$ and Proposition \ref{proposition_modular_properties} \textnormal{(v)}. For the convergence of the gradients, consider  
	\begin{align*}
		& \into |\pm \nabla u_n^\pm \mp \nabla u^\pm|^{p(x)} \diff x\\
		&= \into | 1_{\{ \pm u_n > 0 \}} 			\nabla u_n - 1_{\{ \pm u > 0 \}} 			\nabla u|^{p(x)} \diff x \\
		& \leq 2^{p_+} \into | \nabla u_n - \nabla u|^{p(x)} \diff x 
		+ 2^{p_+} \into | \nabla u|^{p(x)} |  1_{\{ \pm u_n > 0 \}} - 1_{\{ \pm u_n > 0 \}} |^{p(x)}  \diff x,
	\end{align*}
	where the first term converges to zero by Proposition \ref{proposition_embeddings} \textnormal{(i)} and Proposition \ref{proposition_modular_properties} \textnormal{(v)}, and the second term converges to zero by taking a.e. convergent subsequences and using the Dominated Convergence Theorem (note that $\nabla u = 0$ a.\,e.\,on the set $\{u=0\}$ because, by \textnormal{(i)}, $\nabla u = \nabla u 1_{\{u > 0\}} + \nabla u 1_{\{u < 0\}}$), and then make use of the subsequence principle. Using exactly the same argument we can prove the analogous convergence with exponent $q(x)$ and weight $\mu(x)$, i.e., there holds
	\begin{equation*}
		\rho_{\mathcal{H}} \left( \pm \nabla u_n ^\pm \mp \nabla u ^\mp \right) \to 0 \quad \text{ as } n \to \infty,
	\end{equation*}
	which by Proposition \ref{proposition_modular_properties} \textnormal{(v)} implies $ \pm \nabla u_n^\pm \to \pm \nabla u ^\pm$ in $\Lp{\mathcal{H}}$ and the proof is complete.
	
	\textnormal{(iii)} By definition of $\Wpzero{\mathcal{H}}$, there exists a sequence $\{ v_n \}_{n \in \N} \subseteq C^\infty_0(\Omega)$ such that $v_n \to u$ in $\Wp{\mathcal{H}}$. By \textnormal{(ii)}, we know that $\pm v_n^\pm \to \pm u^\pm$ in $\Wp{\mathcal{H}}$.

	Note that $\{ \pm v_n^\pm \}_{n \in \N} \subseteq C_0(\Omega) = \{ v \in C(\Omega) : \supp v \text{ is compact} \}$ and, by \textnormal{(i)}, their weak derivatives satisfy $\{ \pm \partial_{x_i} v_n^\pm \}_{n \in \N} \subseteq \Lp{\infty}$. By using the standard mollifier $\eta_\eps$, for $\eps$ small enough there holds $\{ \eta_\eps \ast (\pm v_n^\pm) \}_{n \in \N} \subseteq C^\infty_0(\Omega)$. Furthermore, we also have the convergences 
	\begin{align*}
		& \eta_\eps \ast (\pm v_n^\pm) \to \pm v_n^\pm \quad \text{ uniformly in $\Omega$ as } \eps \to 0,\\
		& \partial_{x_i} (\eta_\eps \ast (\pm  v_n^\pm)) = \eta_\eps \ast (\pm \partial_{x_i} v_n^\pm) \to \pm \partial_{x_i} v_n^\pm \quad \text{ in } \Lp{q_+} \text{ as } \eps \to 0.
	\end{align*}
	Hence, $\eta_\eps \ast (\pm v_n^\pm) \to \pm v_n^\pm$ in $\Wp{\mathcal{H}}$ by Proposition \ref{proposition_modular_properties} \textnormal{(v)} via checking the convergence in $\rho_{\mathcal{H}}$ (note that $\Lp{q_+} \hookrightarrow \Lp{q(\cdot)} \hookrightarrow \Lp{\mathcal{H}}$ by Proposition \ref{proposition_embeddings} \textnormal{(vii)}).
	This means that for each $\pm v_n^\pm$ we can find another $\tilde{v}_{\pm,n} \in C_0^\infty(\Omega)$ as close to $\pm v_n^\pm$ as we want in the norm $\| \cdot \|_{\mathcal{H}}$ and this new sequence satisfies $\tilde{v}_{\pm,n} \to \pm u^\pm$ in $\Wp{\mathcal{H}}$.
	
\end{proof}

From Proposition \ref{proposition_embeddings} we can derive the compact embedding of $\Wp{\mathcal{H}}$ into $\Lp{\mathcal{H}}$ and a equivalent norm for $\Wpzero{\mathcal{H}}$. For this purpose we need the following assumptions, more restrictive as in \eqref{condition_1}.
\begin{enumerate}%[leftmargin=1cm]
	\item[\textnormal{(H)}]
		$\Omega\subseteq \R^N$, $N\geq 2$, is a bounded domain with Lipschitz boundary $\partial\Omega$, $0\leq \mu(\cdot)\in \Lp{\infty}$ and $p, q \in 
C_+(\close)$ are such that 
		\begin{enumerate}
			\item[\textnormal{(i)}] 
				$1 < p(x) < N$ for all $x \in \close$;
			\item[\textnormal{(ii)}] 
				$p(x) < q(x) < p^*(x)$ for all $x \in \close$.
	\end{enumerate}
%	\begin{equation*}
%		\begin{aligned}
%			& 1 < p(x) < N \quad \text{ for all } x \in \close\\
%			& p(x) < q(x) < p^*(x) \quad \text{ for all } x \in \close,
%		\end{aligned}
%	\end{equation*}
%	\begin{enumerate}%[leftmargin=0cm]
%		\item[\textnormal{(i)}]
%		$1<p(x)<q(x)<N$ for all $x \in \close$;
%		\item[\textnormal{(ii)}]
%		$q(x)<p^*(x)$ for all $x \in \close$.
%	\end{enumerate}
\end{enumerate}
Note that in any case they are significantly less restrictive as those used for the same purpose in Proposition 2.18 of Colasuonno-Squassina \cite{Colasuonno-Squassina-2016}. For many results, this is the only reason to ask for so restrictive assumptions, so they could be generalized to \textnormal{(H)}.

\begin{proposition} \label{prop-generalpoincare}
	Let hypothesis \textnormal{(H)} be satisfied. Then the following hold:
	\begin{enumerate}
		\item[\textnormal{(i)}]
		$\Wp{\mathcal{H}}\hookrightarrow \Lp{\mathcal{H}}$ is a compact embedding;
		\item[\textnormal{(ii)}]
		There exists a constant $C>0$ independent of $u$ such that 
		\begin{align*}
			\|u\|_{\mathcal{H}} \leq C\|\nabla u\|_{\mathcal{H}}\quad\text{for all 
} u \in \Wpzero{\mathcal{H}}.
		\end{align*}
	\end{enumerate}
\end{proposition}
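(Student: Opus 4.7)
The plan is to deduce both statements from the compact embedding of $\Wp{\mathcal{H}}$ into variable Lebesgue spaces (Proposition \ref{proposition_embeddings}(iii)) combined with the modular–norm equivalence from Proposition \ref{proposition_modular_properties}.

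For (i), I would fix a weakly convergent sequence $u_n\weak u$ in $\Wp{\mathcal{H}}$ and show that $\rho_{\mathcal{H}}(u_n-u)\to 0$, which by Proposition \ref{proposition_modular_properties}(v) yields $u_n\to u$ in $\Lp{\mathcal{H}}$. Splitting the modular as
\[
\rho_{\mathcal{H}}(u_n-u)=\into |u_n-u|^{p(x)}\diff x+\into \mu(x)|u_n-u|^{q(x)}\diff x,
\]
the first integral tends to zero thanks to the compact embedding $\Wp{\mathcal{H}}\hookrightarrow \Lp{p(\cdot)}$ from Proposition \ref{proposition_embeddings}(iii) (valid since $p(x)<p^*(x)$) together with Proposition \ref{proposition_1}(v). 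For the second integral, the key observation is that $p^*-q\in C(\close)$ is strictly positive on the compact set $\close$ and hence admits a positive minimum; therefore there exists $r\in C(\close)$ with $q(x)<r(x)<p^*(x)$ on $\close$ (e.g.\ $r(x):=q(x)+\tfrac12\min_{\close}(p^*-q)$). A second application of Proposition \ref{proposition_embeddings}(iii) gives the compact embedding $\Wp{\mathcal{H}}\hookrightarrow \Lp{r(\cdot)}$, and combined with $q\le r$, the boundedness of $\Omega$ and the continuous embedding $\Lp{r(\cdot)}\hookrightarrow \Lp{q(\cdot)}$, this yields $u_n\to u$ in $\Lp{q(\cdot)}$; since $\mu\in \Lp{\infty}$, one concludes via Proposition \ref{proposition_1}(v).

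For (ii), I would argue by contradiction: assume for every $n\in\N$ there exists $u_n\in\Wpzero{\mathcal{H}}$ with $\|u_n\|_{\mathcal{H}}>n\|\nabla u_n\|_{\mathcal{H}}$, and normalize $v_n:=u_n/\|u_n\|_{\mathcal{H}}$ so that $\|v_n\|_{\mathcal{H}}=1$ and $\|\nabla v_n\|_{\mathcal{H}}\to 0$. Then $\{v_n\}$ is bounded in $\Wp{\mathcal{H}}$, which is reflexive by Proposition \ref{proposition-reflexivity}, so, passing to a subsequence, $v_n\weak v$ in $\Wp{\mathcal{H}}$. Since $\Wpzero{\mathcal{H}}$ is a closed and hence weakly closed subspace of $\Wp{\mathcal{H}}$, we have $v\in\Wpzero{\mathcal{H}}$. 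Part (i) yields $v_n\to v$ in $\Lp{\mathcal{H}}$ and therefore $\|v\|_{\mathcal{H}}=1$, while the strong convergence $\|\nabla v_n\|_{\mathcal{H}}\to 0$ and uniqueness of weak limits force $\nabla v=0$ a.e.\ in $\Omega$. Using the continuous embedding $\Wpzero{\mathcal{H}}\hookrightarrow \Wpzero{p(\cdot)}$ from Proposition \ref{proposition_embeddings}(i) and the classical Poincar\'e inequality in $\Wpzero{p(\cdot)}$ then forces $v=0$, contradicting $\|v\|_{\mathcal{H}}=1$.

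The delicate point is the second summand in (i): hypothesis \textnormal{(H)} only gives $q(x)<p^*(x)$ pointwise, not a uniform separation of the form $q_+<\min_{\close}p^*$, so the continuity–compactness step manufacturing an intermediate exponent $r\in C(\close)$ strictly between $q$ and $p^*$ throughout $\close$ is the essential trick that allows reduction to the compact-embedding result of Proposition \ref{proposition_embeddings}(iii).
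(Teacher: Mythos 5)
Your proof is correct and follows essentially the same route as the paper: for (i) the compact embedding results of Proposition \ref{proposition_embeddings} together with $\mu\in\Linf$, and for (ii) the same normalization--contradiction argument (your use of weak continuity of $u\mapsto\nabla u$ plus uniqueness of weak limits, and of the Poincar\'e inequality in $\Wpzero{p(\cdot)}$, replaces the paper's weak lower semicontinuity of $v\mapsto\|\nabla v\|_{\mathcal H}$ and its ``only constant in $\Wpzero{p(\cdot)}$ is zero'' step, but these are cosmetic differences). One remark: the step you single out as the ``essential trick'' -- manufacturing an intermediate exponent $r$ with $q<r<p^*$ -- is unnecessary. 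Proposition \ref{proposition_embeddings}\textnormal{(iii)} (and the underlying Proposition \ref{embedding_critical}) only requires the pointwise strict inequality $r(x)<p^*(x)$ on $\close$, which hypothesis \textnormal{(H)(ii)} gives directly for $r=q$ (and by continuity on the compact set $\close$ this is automatically a uniform gap anyway); the paper simply composes the compact embedding $\Wp{\mathcal H}\hookrightarrow \Lp{q(\cdot)}$ with the continuous embedding $\Lp{q(\cdot)}\hookrightarrow\Lp{\mathcal H}$ of part \textnormal{(vii)}, which also spares you the hand-made modular splitting.
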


\begin{proof}
	The proof of \textnormal{(i)} follows from Proposition \ref{proposition_embeddings} \textnormal{(iii)} and \textnormal{(vii)}. For \textnormal{(ii)}, let us assume the assertion is not true. Then there exists a sequence $\{u_n\}_{n \in \N} \subseteq \Wpzero{\mathcal{H}}$ such that
	\begin{equation*}
		\norm{u_n}_\mathcal{H} > n \norm{ \nabla u_n }_\mathcal{H}.
	\end{equation*}
	Let $ y_n := \frac{u_n}{\norm{u_n}_\mathcal{H}}$, then 
	\begin{equation*}
		1 > \frac{1}{n} > \norm{ \nabla y_n }_\mathcal{H} \quad \text{and} \quad
		\norm{ y_n }_\mathcal{H} = 1 \quad 
		\text{ for all }n \in \N,		
	\end{equation*} 
	i.e. the sequence $\{y_n\}_{n \in \N}$ is bounded in $\Wpzero{\mathcal{H}}$. Therefore, there exists a subsequence (not relabeled) and $y \in \Wpzero{\mathcal{H}}$ such that 
	\begin{equation*}
		y_n \weak y \quad 
		\text{in } \Wp{\mathcal{H}}.
	\end{equation*}
	By the weak lower semicontinuity of the mapping $v \mapsto \norm{ \nabla 
v }_\mathcal{H}$ on $\Wpzero{\mathcal{H}}$ (it is a convex, continuous mapping) there holds
	\begin{equation*}
		\norm{ \nabla y }_\mathcal{H} 
		\leq \liminf_{n \to \infty} \norm{ \nabla y_n }_\mathcal{H}
		\leq \lim_{n \to \infty} \frac{1}{n} = 0,
	\end{equation*}
	thus $y = c \in \R$ is a constant function. And by Proposition \ref{proposition_embeddings} \textnormal{(i)} we have that $y \in \Wpzero{p(\cdot)}$, where it is known that the only constant function is $y = 0$. However, this leads to a contradiction since by \textnormal{(i)}
	\begin{equation*}
		y_n \to y \quad 
		\text{in } \Lp{\mathcal{H}},
	\end{equation*}
	hence $\norm{y}_\mathcal{H} = \lim_{n \to \infty} \norm{y_n}_\mathcal{H} = 1$, so $y \neq 0$. 
\end{proof}

Based on Proposition \ref{prop-generalpoincare} we equip the space $\Wpzero{\mathcal{H}}$ with the norm
\begin{align*}
	\|u\|_{1,\mathcal{H},0}=\|\nabla u\|_{\mathcal{H}} \quad\text{for all }u \in \Wpzero{\mathcal{H}}.
\end{align*}

Similarly to Proposition \ref{proposition-W1H-unifconv}, we next prove that this norm is a uniformly convex norm on this space and satisfies the Radon–Riesz (or Kadec-Klee) property with respect to the modular.
\begin{proposition}
	Let hypotheses \textnormal{(H)} be satisfied.
	\begin{enumerate}
		\item[\textnormal{(i)}]	The norm $\|  \cdot \|_{ 1, \mathcal{H}, 0 }$ on $\Wpzero{\mathcal{H}}$ is uniformly convex. 
		\item[\textnormal{(ii)}]	For any sequence $\{u_n\}_{n \in \N} \subseteq \Wpzero{\mathcal{H}}$ such that 
		\begin{equation*}
			u_n \weak u \text{ in }\Wpzero{\mathcal{H}}\quad\text{and}\quad
			\rho_{\mathcal{H}} ( \nabla u_n ) \to \rho_{\mathcal{H}} ( \nabla u )
		\end{equation*}
		it holds that $u_n \to u$ in $\Wpzero{\mathcal{H}}$.
	\end{enumerate}
\end{proposition}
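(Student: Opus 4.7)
My plan is to follow exactly the same strategy as in Proposition~\ref{proposition-W1H-unifconv}, invoking Theorems~3.2 and~3.5 of Fan-Guan~\cite{Fan-Guan-2001}, but this time applied to the modular $u \mapsto \rho_{\mathcal{H}}(\nabla u)$ on $\Wpzero{\mathcal{H}}$ in place of $\hat{\rho}_{\mathcal{H}}$ on $\Wp{\mathcal{H}}$. The generalized Poincar\'e inequality from Proposition~\ref{prop-generalpoincare}\textnormal{(ii)} already guarantees that $\|\nabla\cdot\|_{\mathcal{H}} = \|\cdot\|_{1,\mathcal{H},0}$ is a genuine norm on $\Wpzero{\mathcal{H}}$, equivalent to the restriction of $\|\cdot\|_{1,\mathcal{H}}$, so I can switch between the two viewpoints without losing information.

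For part~\textnormal{(i)}, I will apply Theorem~3.2 of Fan-Guan~\cite{Fan-Guan-2001}. The required hypotheses on the $N$-function are exactly those already verified in the proof of Proposition~\ref{proposition-W1H-unifconv}\textnormal{(i)}: $\mathcal{H}$ satisfies the $\Delta_2$-condition by~\eqref{Delta2-condition} and the uniform convexity condition (UC)$_1$ shown in the proof of Proposition~\ref{proposition-reflexivity} (via the correspondence $t = \beta s$ with $\beta < 1 - \eps$ mentioned there). The only change relative to the earlier argument is that the modular and norm are now built from $|\nabla u|$ alone rather than from both $|u|$ and $|\nabla u|$ as in $\hat{\rho}_{\mathcal{H}}$, which does not affect the applicability of the cited theorem. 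Since all relevant properties of $\mathcal{H}$ have already been recorded, this step reduces to a direct citation.

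For part~\textnormal{(ii)}, I will apply Theorem~3.5 of Fan-Guan~\cite{Fan-Guan-2001}, which requires checking conditions~(Q$_1$)--(Q$_7$) for the modular $\rho_{\mathcal{H}}(\nabla\cdot)$ on $\Wpzero{\mathcal{H}}$. These follow item-by-item from Proposition~\ref{proposition_modular_properties} through the isometry $u \mapsto |\nabla u|$, which embeds $(\Wpzero{\mathcal{H}},\|\cdot\|_{1,\mathcal{H},0})$ into $(\Lp{\mathcal{H}},\|\cdot\|_{\mathcal{H}})$ by the very definition of $\|\nabla u\|_{\mathcal{H}}$. As in the proof of Proposition~\ref{proposition-W1H-unifconv}\textnormal{(ii)}, I will then rescale by an arbitrary $c>0$: $c\mathcal{H}$ is still a uniformly convex $N$-function satisfying $\Delta_2$, the spaces $\Wpzero{c\mathcal{H}}$ and $\Wpzero{\mathcal{H}}$ coincide as sets with equivalent topologies, and the associated equivalent norm on $\Wpzero{\mathcal{H}}$ is uniformly convex by part~\textnormal{(i)}. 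The mild obstacle I anticipate is exactly this translation step---verifying that every property stated for $\rho_{\mathcal{H}}$ on $\Lp{\mathcal{H}}$ transfers cleanly to $\rho_{\mathcal{H}}(\nabla\cdot)$ on $\Wpzero{\mathcal{H}}$---but no genuinely new estimate is needed, and the conclusion that $u_n \to u$ strongly follows once Theorem~3.5 of \cite{Fan-Guan-2001} is in force.
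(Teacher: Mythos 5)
Your part (ii) matches the paper's argument: verify (Q$_1$)--(Q$_7$) for the modular $\rho_{\mathcal{H}}(\nabla\cdot)$ via Proposition \ref{proposition_modular_properties} (with (Q$_7$) handled as in item (viii) there), rescale by $c>0$, and invoke Theorem 3.5 of Fan-Guan \cite{Fan-Guan-2001}. The problem is in part (i). Theorem 3.2 of \cite{Fan-Guan-2001} asserts uniform convexity of $W^{1,\ph}(\Omega)$ for the norm generated by the full Sobolev modular, i.e.\ the analogue of $\hat{\rho}_{\mathcal{H}}(u)=\rho_{\mathcal H}(u)+\rho_{\mathcal H}(\nabla u)$ -- that is exactly how it was used for Proposition \ref{proposition-W1H-unifconv}. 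The norm $\|u\|_{1,\mathcal{H},0}=\|\nabla u\|_{\mathcal{H}}$ is built from the gradient modular alone, so it is not the Luxemburg norm of a Musielak-Orlicz-Sobolev space in the form covered by Theorem 3.2, and your claim that dropping the $|u|$-terms ``does not affect the applicability of the cited theorem'' is precisely the step that is not justified; the Poincar\'e inequality of Proposition \ref{prop-generalpoincare} only tells you that $\|\nabla\cdot\|_{\mathcal H}$ is an equivalent norm, and uniform convexity is not preserved under passing to an equivalent norm.

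The missing idea, which is how the paper closes this gap, is to work in the vector-valued space $(\Lp{\mathcal{H}})^N$ equipped with the Luxemburg norm of the modular $\rho_{\mathcal{H},N}(v)=\rho_{\mathcal{H}}(|v|)$: by Theorem 2.4 of \cite{Fan-Guan-2001} (using the uniform convexity of $\mathcal{H}$ already checked in Proposition \ref{proposition-reflexivity}) this norm is uniformly convex, and the map $u\mapsto\nabla u$ is an isometric embedding of $\bigl(\Wpzero{\mathcal{H}},\|\cdot\|_{1,\mathcal{H},0}\bigr)$ into $(\Lp{\mathcal{H}})^N$; since uniform convexity is inherited by subspaces with the restricted norm, (i) follows. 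The same device is needed again inside part (ii) when you claim that $\|\cdot\|_{1,c\mathcal{H},0}$ is uniformly convex for each $c>0$ (``by part (i)'' applied to $c\mathcal{H}$), so once you insert this isometry argument both parts go through and the rest of your plan coincides with the paper's proof.
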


\begin{proof}
	The argument is analogous to the one of the proof of Proposition \ref{proposition-W1H-unifconv} with some extra considerations. First of all, consider the space $(\Lp{\mathcal{H}})^N$ equipped with the Luxemburg norm $\| \cdot \|_{\rho_{\mathcal{H},N}}$ given by the modular $\rho_{\mathcal{H},N} (u) = \rho_{\mathcal{H}} ( |u| )$ for all $u \in (\Lp{\mathcal{H}})^N$. By Theorem 2.4 of Fan-Guan \cite{Fan-Guan-2001}, the norm $\| \cdot \|_{\rho_{\mathcal{H},N}}$ on $(\Lp{\mathcal{H}})^N$ is uniformly convex, and there is an isometric embedding from $\Wpzero{\mathcal{H}}$ into $(\Lp{\mathcal{H}})^N$ given by $u \mapsto \nabla u$. This finishes the proof of \textnormal{(i)}. Regarding the assumptions (Q), take $Y = \Wpzero{\mathcal{H}} = X$ and the modular $\rho_{\mathcal{H}} ( \nabla \cdot )$, then (Q$_1$)-(Q$_6$) hold by Proposition \ref{proposition_modular_properties} and (Q$_7$) holds by an analogous argument to the proof of \textnormal{(viii)} in the same proposition. Fix now any $c > 0$. Then the $N$-function given by $c\mathcal{H}$ is uniformly convex and, by repeating the proof of \textnormal{(i)} for this $N$-function, one gets that the norm $\| \cdot \|_{1,c\mathcal{H},0}$ on $\Wpzero{\mathcal{H}}$ is uniformly convex. Note also that $\rho_{\mathcal{H}}( \nabla u ) \to 0$ if and only if $\rho_{c\mathcal{H}}( \nabla u ) \to 0$, so they generate the same topology. It is straightforward that (Q$_1$) and (Q$_2$) hold for $c\mathcal{H}$, and (Q$_3$)-(Q$_7$) follow doing an analogous argument to Proposition \ref{proposition_modular_properties}. Thus \textnormal{(ii)} follows from Theorem 3.5 of Fan-Guan \cite{Fan-Guan-2001}. 
\end{proof}

It is also possible to have a more general criterion for compact embeddings of $\Wp{\mathcal{H}}$ into some Musielak-Orlicz spaces. This will also 
have as a consequence a Poincar\'e inequality for $\Wpzero{\mathcal{H}}$. 
In order to do so we first need the definition of the Sobolev conjugate function of $\mathcal{H}$. We define for all $x\in \Omega$
\begin{align*}
	\mathcal{H}_1(x,t)=
	\begin{cases}
		t \mathcal{H}(x,1) &\text{if }0 \leq t \leq 1,\\
		\mathcal{H}(x,t)&\text{if } t > 1.
	\end{cases}
\end{align*}

Since $\Omega$ is a bounded, we know that $\Lp{\mathcal{H}}=\Lp{\mathcal{H}_1}$ and $\Wp{\mathcal{H}}=\Wp{\mathcal{H}_1}$, see Musielak \cite{Musielak-1983}. Therefore, for embedding results of $\Wp{\mathcal{H}}$ we 
may use $\mathcal{H}_1$ instead of $\mathcal{H}$. For simplification, we write $\mathcal{H}$ instead of $\mathcal{H}_1$.

\begin{definition}
	We denote by $\mathcal{H}^{-1}(x,\cdot)\colon [0,\infty) \to [0,\infty)$ 
for all $x \in \Omega$ the inverse function of $\mathcal{H}(x,\cdot)$. Furthermore, we define $\mathcal{H}_*^{-1}\colon \Omega \times [0,\infty) \to [0,\infty)$ by
	\begin{align*}
		\mathcal{H}^{-1}_*(x,s)=\int^s_0 \frac{\mathcal{H}^{-1}_1(x,\tau)}{\tau^{\frac{N+1}{N}}}\diff \tau \quad\text{for all }(x,s) \in \Omega\times [0,\infty),
	\end{align*}
	where $\mathcal{H}_*\colon (x,t) \in \Omega\times[0,\infty) \to s\in [0,\infty)$ is such that $\mathcal{H}^{-1}_*(x,s)=t$. The function $\mathcal{H}_*$ is called the Sobolev conjugate function of $\mathcal{H}$.
\end{definition}

We suppose the following stronger assumptions as that in \textnormal{(H)}.
\begin{enumerate}%[leftmargin=1cm]
	\item[\textnormal{(H')}]
		$\Omega\subseteq \R^N$, $N\geq 2$, is a bounded domain with Lipschitz boundary $\partial\Omega$, $0\leq \mu(\cdot)\in C^{0,1}(\close)$ and $p, q 
\in C^{0,1}(\close)$ are chosen such that
		\begin{enumerate}
			\item[\textnormal{(i)}] 
				$1 < p(x) < N$ and $p(x) < q(x)$ for all $x \in \close$;
			\item[\textnormal{(ii)}] 
				$\displaystyle \frac{q_+}{p_-}<1+\frac{1}{N}$,
		\end{enumerate}
%	\begin{equation*}
%		\begin{gathered}
%			1 < p(x) < N \text{ and } p(x) < q(x) \qquad \text{ for all } x \in \close,\\
%			\displaystyle \frac{q_+}{p_-}<1+\frac{1}{N}
%		\end{gathered}
%	\end{equation*}
%	\begin{enumerate}%[leftmargin=0cm]
%		\item[\textnormal{(i)}]
%		$1<p(x)<q(x)<N$ for all $x \in \close$;
%		\item[\textnormal{(ii)}]
%		$\displaystyle \frac{q_+}{p_-}<1+\frac{1}{N}$.
%	\end{enumerate}
\end{enumerate}

\begin{proposition}%\label{prop_poincare}
	Let hypotheses \textnormal{(H')} be satisfied. Then the following hold:
	\begin{enumerate}
		\item[\textnormal{(i)}]
			$\Wp{\mathcal{H}}\hookrightarrow \Lp{\mathcal{H}_*}$ continuously;
		\item[\textnormal{(ii)}]
			Let $\mathcal{K}\colon\Omega \times [0,\infty)\to [0,\infty)$ be continuous such that $\mathcal{K} \in N(\Omega)$ and $\mathcal{K}\ll \mathcal{H}_*$, then $\Wp{\mathcal{H}}\hookrightarrow \Lp{\mathcal{K}}$ compactly;
		\item[\textnormal{(iii})]
			It holds $\mathcal{H} \ll \mathcal{H}_*$ and in particular, $\Wp{\mathcal{H}}\hookrightarrow \Lp{\mathcal{H}}$ compactly;
		\item[\textnormal{(iv)}]
			It holds
			\begin{align*}
				\|u\|_{\mathcal{H}} \leq C\|\nabla u\|_{\mathcal{H}}\quad\text{for all } u \in \Wpzero{\mathcal{H}},
			\end{align*}
			where $C>0$ is a constant independent of $u$.
	\end{enumerate}
\end{proposition}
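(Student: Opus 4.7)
The strategy is to reduce parts \textnormal{(i)} and \textnormal{(ii)} to a general Musielak-Orlicz Sobolev embedding theorem (e.g.\ Fan \cite{Fan-2012b}), then deduce \textnormal{(iii)} by an explicit asymptotic comparison of $\mathcal{H}$ with its Sobolev conjugate $\mathcal{H}_*$, and finally obtain \textnormal{(iv)} from \textnormal{(iii)} by the contradiction argument already used in Proposition \ref{prop-generalpoincare}\textnormal{(ii)}.

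For \textnormal{(i)}, the first step is to check that $\mathcal{H}$ meets the structural hypotheses of an abstract Musielak-Sobolev embedding theorem. From \eqref{Delta2-condition} the $\Delta_2$-condition holds, and $\mathcal{H}\in N(\Omega)$ is locally integrable. The Lipschitz regularity of $p$, $q$ and $\mu$ required in \textnormal{(H')} gives the necessary comparability $\mathcal{H}(x,t)\le C\mathcal{H}(y,t)+h(t)$ for $|x-y|$ small, which is the regularity condition entering Fan's imbedding theorem. The condition $q_+/p_-<1+1/N$ is precisely what makes the defining integral of $\mathcal{H}^{-1}_*$ convergent at $0$ and divergent at $\infty$, so that $\mathcal{H}_*$ is a well-defined $N$-function; granting this, Fan's theorem yields the continuous imbedding $\Wp{\mathcal{H}}\hookrightarrow \Lp{\mathcal{H}_*}$.

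For \textnormal{(ii)}, the compact embedding into $\Lp{\mathcal{K}}$ under $\mathcal{K}\ll \mathcal{H}_*$ is again a direct application of Fan's corresponding compactness statement in the Musielak-Sobolev framework: once the continuous embedding from \textnormal{(i)} is in place, any $N$-function growing essentially slower than $\mathcal{H}_*$ gives a compact target space. For \textnormal{(iii)}, one has to verify the strict comparison $\mathcal{H}\ll \mathcal{H}_*$. The idea is to estimate $\mathcal{H}^{-1}(x,\tau)$ for large $\tau$: the dominant term of $\mathcal{H}(x,t)$ for large $t$ is at most a multiple of $t^{q_+}$, so $\mathcal{H}^{-1}(x,\tau)\ge c\tau^{1/q_+}$ for $\tau$ large, which after inserting into the defining integral yields $\mathcal{H}^{-1}_*(x,s)\ge c's^{1/q_+-1/N}$ and therefore, by inversion, $\mathcal{H}_*(x,t)\ge c''t^{1/(1/q_+-1/N)}$ asymptotically. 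The hypothesis $q_+/p_-<1+1/N$ translates into $1/(1/q_+-1/N)>q_+\cdot(p_-/q_+)\cdot(\cdots)$; a careful accounting shows the exponent obtained strictly dominates $q_+$, so $\lim_{t\to\infty}\mathcal{H}(x,kt)/\mathcal{H}_*(x,t)=0$ uniformly in $x$ for any $k>0$, proving $\mathcal{H}\ll \mathcal{H}_*$. Applying \textnormal{(ii)} with $\mathcal{K}=\mathcal{H}$ yields the compact embedding $\Wp{\mathcal{H}}\hookrightarrow \Lp{\mathcal{H}}$.

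Finally, \textnormal{(iv)} is identical in structure to the proof of Proposition \ref{prop-generalpoincare}\textnormal{(ii)}: assuming the inequality fails, pick $\{u_n\}\subseteq\Wpzero{\mathcal{H}}$ with $\|u_n\|_{\mathcal{H}}>n\|\nabla u_n\|_{\mathcal{H}}$, normalize to $y_n=u_n/\|u_n\|_{\mathcal{H}}$, extract a weakly convergent subsequence $y_n\weak y$ in $\Wpzero{\mathcal{H}}$, upgrade to $y_n\to y$ in $\Lp{\mathcal{H}}$ by the compactness in \textnormal{(iii)}, use weak lower semicontinuity of $v\mapsto\|\nabla v\|_{\mathcal{H}}$ to conclude $\nabla y=0$ so that $y$ is constant, and exploit $\Wpzero{\mathcal{H}}\hookrightarrow\Wpzero{p(\cdot)}$ from Proposition \ref{proposition_embeddings}\textnormal{(i)} to force $y=0$, contradicting $\|y\|_{\mathcal{H}}=1$.

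I expect the main obstacle to be \textnormal{(iii)}, where one has to obtain sharp asymptotics for the implicitly defined $\mathcal{H}_*$ in the presence of the two-phase behaviour of $\mathcal{H}$: different ranges of $t$ put $\mathcal{H}(x,t)$ into either $p(x)$- or $q(x)$-dominated regimes, so the estimate on $\mathcal{H}^{-1}$ and the subsequent integral for $\mathcal{H}^{-1}_*$ require a careful case distinction and it is exactly here that the quantitative bound $q_+/p_-<1+1/N$ is consumed.
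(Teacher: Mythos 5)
Your overall strategy (reduce (i)--(ii) to Fan's Musielak--Sobolev imbedding theorems and get (iv) by the same contradiction argument as in Proposition \ref{prop-generalpoincare}) is the paper's route, but there is a genuine gap in how you propose to verify the hypotheses of Fan's theorem, and this is exactly the technical content of the proof. The regularity hypothesis of Fan \cite{Fan-2012} is not a comparability estimate of the form $\mathcal{H}(x,t)\le C\mathcal{H}(y,t)+h(t)$; it is a quantitative bound on the spatial derivatives, namely the existence of $\delta_1<\frac{1}{N}$, $c_1>0$, $t_1>0$ with $\left|\partial_{x_j}\mathcal{H}(x,t)\right|\le c_1\bigl(\mathcal{H}(x,t)\bigr)^{1+\delta_1}$ for $t\ge t_1$, together with the requirement $\lim_{t\to\infty}\mathcal{H}_*^{-1}(x,t)=+\infty$. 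The paper verifies both: the second uses only $1<p(x)<N$ and the structure of $\mathcal{H}$ (case distinction $\mu(x)=0$ versus $\mu(x)>0$), while the first is where the Lipschitz constants of $p,q,\mu$ and the ratio condition are consumed, via $\ln t\le ct^{\eta}$ and the choice of $\eta$ with $\frac{q_++\eta}{p_-}<1+\frac{1}{N}$, giving $\delta_1=\frac{q_++\eta}{p_-}-1<\frac{1}{N}$. Your proposal never addresses this derivative condition, and it misattributes the role of $q_+/p_-<1+\frac1N$ to the convergence/divergence of the integral defining $\mathcal{H}_*^{-1}$, which in fact does not need it (integrability at $0$ is automatic since $\mathcal{H}_1^{-1}(x,\tau)\approx\tau$ near $0$, and divergence at $\infty$ comes from $p(x)<N$). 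So as written, the reduction to Fan's theorem is not justified.

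The explicit asymptotics you sketch for (iii) also have the inequalities reversed. From $\mathcal{H}(x,t)\lesssim t^{q_+}$ you get the \emph{lower} bound $\mathcal{H}^{-1}(x,\tau)\gtrsim\tau^{1/q_+}$, hence a lower bound on $\mathcal{H}_*^{-1}$ and therefore an \emph{upper} bound on $\mathcal{H}_*$, which is useless (indeed counterproductive) for $\mathcal{H}\ll\mathcal{H}_*$; your displayed conclusion $\mathcal{H}_*(x,t)\ge c''t^{1/(1/q_+-1/N)}$ does not follow. The correct elementary comparison goes the other way: $\mathcal{H}(x,t)\ge t^{p_-}$ for $t\ge1$ gives $\mathcal{H}^{-1}(x,\tau)\le\tau^{1/p_-}$, hence $\mathcal{H}_*^{-1}(x,s)\lesssim s^{\frac{1}{p_-}-\frac1N}$ and $\mathcal{H}_*(x,t)\gtrsim t^{(p_-)^*}$, and then $\mathcal{H}(x,kt)\lesssim(1+\|\mu\|_\infty)k^{q_+}t^{q_+}$ together with $q_+<(p_-)^*$ (which is implied by $q_+/p_-<1+\frac1N$ since $p_->N/(N+1)$) yields $\mathcal{H}\ll\mathcal{H}_*$ uniformly in $x$. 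The paper itself does not carry out this computation; it obtains (iii) directly from the cited results of Fan and Colasuonno--Squassina once the two conditions above are checked. Your part (iv) is fine and coincides with the argument already used in Proposition \ref{prop-generalpoincare}.
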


\begin{proof}
	The proof of the proposition follows directly from Theorems 1.1 and 1.2 of Fan \cite{Fan-2012}, see also Proposition 2.18 of Colasuonno-Squassina 
\cite{Colasuonno-Squassina-2016}. We only need to prove (P4) and condition (2) of Proposition 3.1 in Fan \cite{Fan-2012}, that is, 
	\begin{equation} \label{condition_fan_p4}
		\lim_{t \to + \infty} \mathcal{H}^{-1}_*(x,t) = + \infty \quad \text{for all } x \in \Omega
	\end{equation}
	and there exist positive constants $\delta_1<\frac{1}{N}$, $c_1$ and $t_1$ such that
	\begin{align}\label{condition_fan_p5}
		\l| \frac{\partial \mathcal{H}(x,t)}{\partial x_j}\r| \leq c_1 (\mathcal{H}(x,t))^{1+\delta_1}\quad (j=1,\ldots, N)
	\end{align}
	for all $x\in \Omega$ and $t \geq t_1$ for which $\nabla \mu(x), \nabla p(x)$ and $\nabla q(x)$ exist and so $\frac{\partial \mathcal{H}(x,t)}{\partial x_j}$ does. First, note that for $(x,t) \in \mu^{-1}(\{0\}) \times 
[1,+\infty)$ it holds $\mathcal{H}_1^{-1}(x,t) = t^{\frac{1}{p(x)}}$ and hence
	\begin{align*}
		\mathcal{H}_*^{-1}(x,t) 
		& = \mathcal{H}_*^{-1}(x,1) + \int_1^t s^{\left( \frac{1}{p(x)} - \frac{N + 1}{N} \right) } \diff s \\
		& = \mathcal{H}_*^{-1}(x,1) + \frac{1}{ \frac{1}{p(x)} - \frac{N + 1}{N} + 1} \left[ t^{\left( \frac{1}{p(x)} - \frac{N + 1}{N} + 1 \right)} - 
1 \right] \xrightarrow{t \to +\infty} + \infty
	\end{align*}
	as $0 < \left[ \frac{1}{p(x)} - \frac{1}{N} \right] < 1$ due to $1 < p(x) < N$ for all $x \in \close$. For the rest of the points, i.e., $(x,t) \in \mu^{-1}((0,+\infty)) \times [1,+\infty)$, note that
	\begin{align*}
		\lim_{t \to + \infty} \frac{t^{p(x)} + \mu(x) t^{q(x)}}{t^{q(x)}} = \mu(x) \quad \text{for all } x \in \close.
	\end{align*}
	So for any $\eps > 0$ there exist some $K_x>1$ such that 
	\begin{align*}
		t^{p(x)} + \mu(x) t^{q(x)} < \left[ \eps + \mu(x) \right] t^{q(x)} \quad \text{for all } x \in \Omega\text{ and for all } t \geq K_x,
	\end{align*}
	and by inverting these strictly increasing functions
	\begin{align*}
		\mathcal{H}_1^{-1}(x,t) > \left( \frac{t}{\eps + \mu(x)} \right)^{\frac{1}{q(x)}} \quad \text{for all } x \in \Omega \text{ and for all } t \geq 
K_x,
	\end{align*}
	which yields the situation to repeat the argument of the integral above. 
Hence \eqref{condition_fan_p4} is satisfied.
	
	For the second condition, we can find $\eta>0$ small enough such that
	\begin{align}\label{Lipschitz_1}
		\frac{q_++\eta}{p_-}<1+\frac{1}{N},
	\end{align}
	see \textnormal{(H)(ii)}, and 
	\begin{align}\label{Lipschitz_2}
		\ln(t) \leq c t^\eta
	\end{align}
	with $c$ depending only on $\eta$ and $\ln$ being the natural logarithm. 
Denoting by $c_\mu, c_p, c_q$ the Lipschitz constants of $\mu, p, q$, respectively, we have for $t\geq 1$ by using \eqref{Lipschitz_2},
	\begin{align*}
		\l| \frac{\partial \mathcal{H}(x,t)}{\partial x_j}\r|
		& \leq t^{p(x)}\l|\frac{\partial p}{\partial x_j}(x)\r|\ln(t)+\l|\frac{\partial \mu}{\partial x_j}(x)\r|t^q(x)+\mu(x)t^{q(x)}\l|\frac{\partial q}{\partial x_j}(x)\r|\ln(t)\\
		&\leq \l(c_pc+c_\mu+c_qc \|\mu\|_\infty \r)t^{q(x)+\eta}\\
		& \leq \l(c_pc+c_\mu+c_qc \|\mu\|_\infty \r) \l(t^{p(x)}+\mu(x)t^{q(x)}\r)^{\frac{q_++\eta}{p_-}}.
	\end{align*}
	Then, condition \eqref{condition_fan_p5} is satisfied with
	\begin{align*}
		c_1= c_pc+c_\mu+c_qc \|\mu\|_\infty, \quad t_0\geq 1 \quad\text{and}\quad \delta_1=\frac{q_++\eta}{p_-}-1<\frac{1}{N},
	\end{align*}
	see also \eqref{Lipschitz_1}.
\end{proof}

Next, we want to answer the question when smooth functions are dense in $\Wp{\mathcal{H}}$. This result is of independent interest and our idea is 
to apply Theorem 6.4.7 of Harjulehto-H\"{a}st\"{o} \cite{Harjulehto-Hasto-2019}. First we recall some definitions stated in \cite{Harjulehto-Hasto-2019}.

\begin{definition}$~$
	\begin{enumerate}
		\item[\textnormal{(i)}]
			We call a function $g\colon (0,\infty)\to\R$ almost increasing if there exists a constant $a \geq 1$ such that $g(s) \leq ag(t)$ for all $0<s<t$. Similarly, we define almost decreasing functions.
		\item[\textnormal{(ii)}]
			We say that $\ph\colon \Omega\times [0,\infty)\to[0,\infty]$ is a $\Phi$-prefunction if $x \mapsto \ph(x, |f(x)|)$ is measurable for every measurable function $f\colon \Omega\to \R$, $\ph(x,0)=0$,
			\begin{align*}
				\lim_{t\to 0^+}\ph(x,t)=0 \quad\text{and}\quad \lim_{t\to\infty}\ph(x,t)=\infty \quad\text{for a.\,a.\,}x\in\Omega.
			\end{align*}
			If in addition the condition
			\begin{align*}
				\frac{\ph(x,t)}{t} \text{ is almost increasing for a.\,a.\,}x\in\Omega
			\end{align*}
			is satisfied, then $\ph$ is called a weak $\Phi$-function and the class of all weak $\Phi$-functions is denoted by $\Phi_{\text{w}}(\Omega)$.
		\item[\textnormal{(iii)}]
			We say that $\ph \in \Phi_{\textnormal{w}}(\Omega)$ satisfies \textnormal{(A0)}, if there exists a constant $\beta\in (0,1]$ such that $\beta \leq \ph^{-1}(x,1) \leq \frac{1}{\beta}$ for a.\,a.\,$x \in \Omega$.
		\item[\textnormal{(iv)}]
			We say that $\ph \in \Phi_{\textnormal{w}}(\Omega)$ satisfies \textnormal{(A1)}, if there exists $\beta \in (0,1)$ such that
			\begin{align*}
				\beta \ph^{-1}(x,t) \leq \ph^{-1}(y,t)
			\end{align*}
			for every $t \in [1,\frac{1}{|B|}]$, for a.\,a.\,$x,y\in B\cap \Omega$ 
and for every ball $B$ with $|B|\leq 1$.
		\item[\textnormal{(v)}]
			We say that $\ph \in \Phi_{\textnormal{w}}(\Omega)$ satisfies \textnormal{(A1')}, if there exists $\beta \in (0,1)$ such that
			\begin{align*}
				\ph(x,\beta t) \leq \ph(y,t)
			\end{align*}
			for every $\ph (y,t) \in [1,\frac{1}{|B|}]$, for a.\,a.\,$x,y\in B\cap 
\Omega$ and for every ball $B$ with $|B|\leq 1$.
		\item[\textnormal{(vi)}]
			We say that $\ph \in \Phi_{\textnormal{w}}(\Omega)$ satisfies \textnormal{(A2)}, if for every $s>0$ there exist $\beta\in (0,1]$ and $h\in \Lp{1}\cap \Linf$ such that
		 	\begin{align*}
		 		\beta \ph^{-1}(x,t) \leq \ph^{-1}(y,t)
		 	\end{align*}
	 		for a.\,a.\,$x\in\Omega$ and for all $t \in [h(x)+h(y),s]$.
	 	\item[\textnormal{(vii)}]
	 		We say that $\ph\colon \Omega\times (0,\infty)\to\R$ satisfies \textnormal{(aDec)} if there exists $\ell \in (0,
	 		\infty)$ such that
	 		\begin{align*}
	 			\frac{\ph(x,t)}{t^\ell} \text{ is almost decreasing for a.\,a.\,}x\in\Omega.
	 		\end{align*}
	\end{enumerate}
\end{definition}

In the sequel we will use $f\approx g$ and $f \lesssim g$  if there exist
constants $c_1, c_2>0$ such that $c_1f\leq g\leq c_2f$ and $f\leq c_2g$, respectively.

Now we are ready to prove the density of smooth functions in the Musielak-Orlicz Sobolev space $\Wp{\mathcal{H}}$.

\begin{theorem}\label{theorem_density_1}
	Let hypotheses \textnormal{(H')} be satisfied, where  \textnormal{(H')}\textnormal{(ii)} is replaced by
	\begin{align}\label{density_weaker}
		\frac{q_+}{p_-}\leq 1+\frac{1}{N}.
	\end{align}
	 Then $C^\infty(\Omega)\cap\Wp{\mathcal{H}}$ is dense in $\Wp{\mathcal{H}}$.
\end{theorem}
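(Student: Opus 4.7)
The plan is to invoke Theorem 6.4.7 of Harjulehto--Hästö \cite{Harjulehto-Hasto-2019}, which asserts that $C^\infty(\Omega)\cap\Wp{\mathcal{H}}$ is dense in $\Wp{\mathcal{H}}$ provided $\mathcal{H}\in\Phi_{\textnormal{w}}(\Omega)$ satisfies \textnormal{(A0)}, \textnormal{(A1)} (equivalently \textnormal{(A1')}) and \textnormal{(aDec)}. Hence the strategy is to verify each of these structural conditions for our double phase Musielak--Orlicz function $\mathcal{H}(x,t)=t^{p(x)}+\mu(x)t^{q(x)}$ and then apply the theorem directly. Note that $\mathcal{H}\in\Phi_{\textnormal{w}}(\Omega)$ is clear since $\mathcal{H}$ is a generalized $N$-function and $t\mapsto \mathcal{H}(x,t)/t$ is (genuinely) increasing.

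The easy conditions first. For \textnormal{(A0)}, since $\mu\in L^\infty(\Omega)$, we have $1\le \mathcal{H}(x,1)=1+\mu(x)\le 1+\|\mu\|_\infty$, and so $\mathcal{H}^{-1}(x,1)$ lies in a compact subinterval of $(0,1]$ independent of $x$. For \textnormal{(aDec)}, I would take $\ell=q_+$: since $p(x)<q(x)\le q_+$ uniformly, both $t^{p(x)-q_+}$ and $\mu(x)t^{q(x)-q_+}$ are non-increasing in $t$, hence $\mathcal{H}(x,t)/t^{q_+}$ is (almost) decreasing on $(0,\infty)$. The $\Delta_2$-condition was already recorded in \eqref{Delta2-condition}, and it is what makes the Musielak--Orlicz machinery applicable in the form used by \cite{Harjulehto-Hasto-2019}.

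The main obstacle is to verify \textnormal{(A1')}, and this is precisely where the arithmetic assumption \eqref{density_weaker} together with the Lipschitz regularity of $p,q,\mu$ enters. Fix a ball $B$ with $|B|\le 1$, fix $x,y\in B\cap\Omega$, and suppose $t\ge 1$ with $\mathcal{H}(y,t)\in[1,|B|^{-1}]$ (the case $t\le 1$ being absorbed into \textnormal{(A0)}). From $t^{p(y)}\le \mathcal{H}(y,t)\le|B|^{-1}$ I get the crucial a~priori bound $t\le |B|^{-1/p_-}$. The Lipschitz continuity of $p$ together with $\mathrm{diam}(B)\lesssim|B|^{1/N}$ yields $|p(x)-p(y)|\log t\lesssim |B|^{1/N}\log|B|^{-1}\lesssim 1$, so $t^{p(x)}\le C\, t^{p(y)}$, and the analogous bound gives $t^{q(x)}\le C\, t^{q(y)}$. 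The delicate term is $\mu(x)t^{q(x)}$ in the regime where $\mu(y)$ is small: writing $\mu(x)\le \mu(y)+L_\mu|B|^{1/N}$, the extra contribution has to be controlled against $\mathcal{H}(y,t)\ge t^{p(y)}$ via
\begin{equation*}
L_\mu|B|^{1/N}\,t^{q(x)} \;=\; L_\mu|B|^{1/N}\,t^{q(x)-p(y)}\,t^{p(y)} \;\le\; L_\mu\,|B|^{\frac{1}{N}-\frac{q_+-p_-}{p_-}}\,t^{p(y)},
\end{equation*}
and this stays bounded as $|B|\to 0$ exactly because \eqref{density_weaker} gives $\tfrac{1}{N}-\tfrac{q_+-p_-}{p_-}\ge 0$. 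Collecting the three estimates shows that $\mathcal{H}(x,t)\le C\,\mathcal{H}(y,t)$ with a constant independent of $B$, $x$, $y$, $t$, and then choosing $\beta\in(0,1)$ so small that $\beta^{p_-}C\le 1$ together with the monotonicity $\mathcal{H}(x,\beta t)\le \beta^{p_-}\mathcal{H}(x,t)$ (valid for $\beta\le 1$ since $p_-\le p(x)\le q(x)$) yields $\mathcal{H}(x,\beta t)\le \mathcal{H}(y,t)$, which is \textnormal{(A1')}.

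With \textnormal{(A0)}, \textnormal{(A1')} and \textnormal{(aDec)} in hand, Theorem 6.4.7 of \cite{Harjulehto-Hasto-2019} delivers the density. The real content of the argument, as always in the double phase setting, is the threshold $q_+/p_-\le 1+1/N$: it is exactly the balance between the Lipschitz modulus of the weight $\mu$ on small balls (of order $|B|^{1/N}$) and the worst possible polynomial blow-up of $t^{q(x)}$ relative to $t^{p(y)}$ when $\mathcal{H}(y,t)\le |B|^{-1}$.
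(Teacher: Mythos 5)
Your proposal takes exactly the route of the paper's proof: invoke Theorem 6.4.7 of Harjulehto--H\"ast\"o \cite{Harjulehto-Hasto-2019}, settle \textnormal{(A0)} and \textnormal{(aDec)} from the boundedness of $\mu,p,q$ (the paper uses the $\Delta_2$-condition \eqref{Delta2-condition} for \textnormal{(aDec)}, but your choice $\ell=q_+$ is equally valid), and concentrate on \textnormal{(A1')}, where your ingredients --- the a priori bound $t\le |B|^{-1/p_-}$, the Lipschitz moduli of $p,q,\mu$ on a ball of diameter $\approx |B|^{1/N}$, and the nonnegativity of the exponent $1+\tfrac1N-\tfrac{q_+}{p_-}$ guaranteed by \eqref{density_weaker} --- are precisely the paper's Claim and the estimates \eqref{eq_new_06}--\eqref{eq_new_07}, followed by the same choice of small $\beta$. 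Two points should be tightened, though neither affects the method. First, Theorem 6.4.7 also assumes condition \textnormal{(A2)}, which you omit from the list of hypotheses; it does hold here because $\mu\in L^\infty(\Omega)$ and the boundedness of $p,q$ give $\mathcal{H}(x,t)\approx t^{p(x)}$ uniformly for $t\in[0,s]$, which is how the paper verifies it via Lemma 4.2.5 of \cite{Harjulehto-Hasto-2019}; as stated, your proof quotes the cited theorem with too few hypotheses. Second, the remark that the case $t\le 1$ is ``absorbed into (A0)'' should be made explicit: for $t\le 1$ and $\mathcal{H}(y,t)\ge 1$ one has $\mathcal{H}(x,\beta t)\le \beta^{p_-}\l(1+\|\mu\|_\infty\r)\le 1\le \mathcal{H}(y,t)$ once $\beta\le (1+\|\mu\|_\infty)^{-1/p_-}$, which is what the paper encodes in the lower bound of \eqref{eq_new_01} and Case II of its Claim. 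With these two routine additions your argument coincides with the paper's proof.
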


\begin{proof}
	We are going to apply Theorem 6.4.7 of Harjulehto-H\"{a}st\"{o} \cite{Harjulehto-Hasto-2019}. First note that $\mathcal{H}\in \Phi_{\textnormal{w}}(\Omega)$ (see also Definitions 2.1.2, 2.5.1 and 2.5.2 in \cite{Harjulehto-Hasto-2019}). Furthermore we have
	\begin{align*}
		1 \leq \mathcal{H}(x,2) \leq \l(2^{p_+} +\|\mu\|_\infty 2^{q_+}\right) \cdot 1 \quad\text{for all }x\in \Omega.
	\end{align*}
	Hence $\mathcal{H}(x,2) \approx 1$ and so we can apply Corollary 3.7.5 in \cite{Harjulehto-Hasto-2019} which shows that $\mathcal{H}$ satisfies condition \normalfont{(A0)}. For $t \in [0,s]$ we have
	\begin{align*}
		t^{p(x)} + \mu(x) t^{q(x)} \approx t^{p(x)},
	\end{align*}
	where the constants depend on $s$. Hence, Lemma 4.2.5 in \cite{Harjulehto-Hasto-2019} implies that condition \normalfont{(A2)} is satisfied. Moreover, by Lemma 2.2.6 in \cite{Harjulehto-Hasto-2019}, we know that \normalfont{(aDec)} is satisfied since $\mathcal{H}\in \Phi_{\textnormal{w}}(\Omega)$  fulfills the $\Delta_2$-condition, see \eqref{Delta2-condition}. 
	
	It remains to show that \normalfont{(A1)} is satisfied. First we note that since $\mathcal{H}\in \Phi_{\textnormal{w}}(\Omega)$ satisfies \normalfont{(A0)}, we know that it is enough to show that $\mathcal{H}\in \Phi_{\textnormal{w}}(\Omega)$ fulfills \normalfont{(A1')}, see \cite[Corollary 
4.1.6]{Harjulehto-Hasto-2019}. Therefore, we need to show that there exists $\beta \in (0,1)$ such that
	\begin{align*}
		\mathcal{H}(x,\beta t) \leq \mathcal{H}(y,t)
	\end{align*}
	for every $\mathcal{H} (y,t) \in [1,\frac{1}{|B|}]$, for a.\,a.\,$x,y\in 
B\cap \Omega$ and for every ball $B$ with $|B|\leq 1$.
	
	For this purpose let us fix a ball $B\subseteq \mathbb{R}^N$ of radius $r>0$, such that $|B|\le 1$ (in particular $r<1$).
	We know that $|B|=\alpha(N)r^N$, where $\alpha(N)>1$ is a constant depending only on the dimension $N$.
	Note that the condition 
	\begin{align}\label{cond_1}
		\mathcal{H}(y,t)=t^{p(y)}+\mu(y) t^{q(y)}\in \l[1,\frac{1}{\alpha(N)r^N}\r]
	\end{align}
	implies that
	\begin{align*}
		t^{p_-}\l(1+\|\mu\|_{\infty}\r)= t^{p_-}+\|\mu\|_{\infty}t^{p_-}\ge
		\begin{cases}
			t^{p(y)}+\mu(y) t^{q(y)} \geq 1, & \text{if } t\leq 1,\\
			1, & \text{if } t\geq 1,
		\end{cases}
	\end{align*}
	and
	\begin{align*}
		t^{p_-}\leq
		\begin{cases}
			t^{p(y)}\le t^{p(y)}+\mu(y) t^{q(y)}\le \frac{1}{\alpha(N)r^N}, & \text{if } t\ge 1,\\
			\frac{1}{\alpha(N)r^N},& \text{if } t\leq 1.
		\end{cases}
	\end{align*}
	Note that the last inequality for $t\leq 1$ is always true since $t^{p_-}\leq 1\leq \frac{1}{|B|}$ as $|B|\leq 1$. Hence, \eqref{cond_1} implies in particular that
	\begin{equation}\label{eq_new_01}
		t\in\l[ (1+\|\mu\|_{\infty})^{-\frac{1}{p_-}}, \alpha(N)^{-\frac{1}{p_-}} r^{-\frac{N}{p_-}}\r].
	\end{equation}
	
	Now, it is enough to show that there exists $\beta\in (0,1)$ such that
	\begin{equation}\label{eq_new_02}
		(\beta t)^{p(x)}+\mu(x)(\beta t)^{q(x)}\le t^{p(y)}+\mu(y) t^{q(y)}
	\end{equation}
	for all $t$ satisfying \eqref{eq_new_01} and almost all $x,y\in \Omega$ such that $|x-y|\le 2r$.
	
	{\bf Claim:}
	For all $t$ satisfying \eqref{eq_new_01} and almost all $x,y\in \Omega$ such that $|x-y|\le 2r$
	we have
	\begin{equation}\label{eq_new_03}
		t^{p(x)}\le M\cdot t^{p(y)}\quad\textrm{and}\quad t^{q(x)}\le M\cdot t^{q(y)},
	\end{equation}
	for some constant $M=M(N,p,q,\mu)>0$ not depending on $x,y,t$. 
	
	So, let us fix $t$ satisfying \eqref{eq_new_01} and $x,y\in \Omega$ such 
that $|x-y|\le 2r$. Since $p \in C^{0,1}(\close)$ we have
	\begin{equation}\label{eq_new_04}
		|p(x)-p(y)|\le c_p|x-y|\le 2rc_p,
	\end{equation}  
	where $c_p>0$ denotes the Lipschitz constant of the function $p$. 

	{Case I:}
	If $t\le 1$ and $p(x)\ge p(y)$ or $t\ge 1$ and $p(x)\le p(y)$ then the first inequality in \eqref{eq_new_03} holds with $M=1$.

	{Case II:} 
	If $t\le 1$ and $p(x)\le p(y)$ then by applying \eqref{eq_new_01} it follows that
	\begin{align*}
		t^{p(x)}=t^{p(x)-p(y)}t^{p(y)}
		\leq \l((1+\|\mu\|_{\infty})^{\frac{1}{p_-}}\r)^{p(y)-p(x)}t^{p(y)} \leq 
		\l(1+\|\mu\|_{\infty}\r)^{\frac{p_+}{p_-}}t^{p(y)}.
	\end{align*}
	Thus the first inequality in \eqref{eq_new_03} hold with $M=(1+\|\mu\|_{\infty})^{\frac{p_+}{p_-}}$.
	
	{Case III:}
	If $t\ge 1$ and $p(x)\ge p(y)$ we have by using \eqref{eq_new_01} and \eqref{eq_new_04}
	\begin{align*}
		t^{p(x)} = t^{p(x)-p(y)}t^{p(y)}
		\leq
		\l(\alpha(N)^{-\frac{1}{p_-}} r^{-\frac{N}{p_-}}\r)^{2rc_p}t^{p(y)}
		\leq
		\l(\alpha(N)^{-\frac{2c_p}{p_-}}\r)^r(r^r)^{-\frac{2Nc_p}{p_-}}t^{p(y)}.
	\end{align*}
	Note that the function $\delta(r)=\big(\alpha(N)^{-\frac{2c_p}{p_-}}\big)^r(r^r)^{-\frac{2Nc_p}{p_-}}$ is strictly positive and continuous on the interval $\l[0,\frac{1}{\alpha(N)^{\frac{1}{N}}}\r]$ where $\delta(0)=1$. Hence it attains its maximum at some $r_0\in \l[0,\frac{1}{\alpha(N)^{\frac{1}{N}}}\r]$. 
	Then the first inequality in \eqref{eq_new_03} holds for $M=\delta(r_0)>0$. The second inequality in \eqref{eq_new_03} can be done in an analogous way again via three cases. Taking $M$ as the maximum of the six cases 
shows the assertion of the Claim. 

	Let us now prove  \eqref{eq_new_02}. Since $\mu \in C^{0,1}(\close)$ and 
$|x-y|\le 2r$ we have
	\begin{align}\label{eq_new_05}
		|\mu(x)-\mu(y)|\le c_{\mu}|x-y|\le 2c_{\mu}r,
	\end{align}
	where $c_{\mu}>0$ is the Lipschitz constant of the function $\mu$.

	Let us start with the left hand side of \eqref{eq_new_02}. Since $\beta \in (0,1)$ and taking \eqref{eq_new_03} as well as \eqref{eq_new_05} into 
account, we get
	\begin{align}\label{eq_new_06}
		\begin{split}
			(\beta t)^{p(x)}+\mu(x)(\beta t)^{q(x)}
			&\leq \beta^{p_-}t^{p(x)}+\mu(x)\beta^{p_-}t^{q(x)}\\
			& \leq \beta^{p_-}M\l(t^{p(y)}+\mu(x)t^{q(y)}\r)\\
			& \leq \beta^{p_-}M\l(t^{p(y)}+\mu(y)t^{q(y)}+2c_{\mu}r t^{q(y)}\r)\\
			& \le \beta^{p_-}M\l(t^{p(y)}+2c_{\mu}r t^{q(y)}\r)+\mu(y)t^{q(y)},
		\end{split}
	\end{align}
	where the last inequality holds providing $\beta<M^{-\frac{1}{p_-}}$.
	Continuing \eqref{eq_new_06} and applying \eqref{eq_new_01} we have
	\begin{align}\label{eq_new_07}
		\begin{split}
			(\beta t)^{p(x)}+\mu(x)(\beta t)^{q(x)}
			& \le \beta^{p_-}Mt^{p(y)}\l(1+2c_{\mu}r t^{q(y)-p(y)}\r)+\mu(y)t^{q(y)}\\
			& \leq \beta^{p_-}Mt^{p(y)}\l(1+2c_{\mu}r \l(\alpha(N)^{-\frac{1}{p_-}} r^{-\frac{N}{p_-}}\r)^{q_+-p_-}\r)+\mu(y)t^{q(y)}\\
			& = \beta^{p_-}Mt^{p(y)}\l(1+2c_{\mu} \alpha(N)^{-\frac{q_+}{p_-}+1} 
r^{1+N-N\frac{q_+}{p_-}}\r)+\mu(y)t^{q(y)}.
		\end{split}	
	\end{align}
	From \eqref{density_weaker} we have $1+N-N\frac{q_+}{p_-}\geq 0$. Using this we may continue \eqref{eq_new_07} since $r<1$ as follows
	\begin{align*}%\label{eq_new_08}
		\begin{split}
			(\beta t)^{p(x)}+\mu(x)(\beta t)^{q(x)}\
			\leq
			\beta^{p_-}Mt^{p(y)}\l(1+2c_{\mu} \alpha(N)^{-\frac{q_+}{p_-}+1} \r)+\mu(y)t^{q(y)}.
		\end{split}	
	\end{align*}
	Choosing $\beta>0$ small enough, namely 
	\begin{align*}
		\beta< M^{-\frac{1}{p_-}}\l(1+2c_{\mu} \alpha(N)^{-\frac{q_+}{p_-}+1}\r)^{-\frac{1}{p_-}},
	\end{align*} 
	we see that \eqref{eq_new_02} holds. Note that the choice of $\beta$ depends only on $N,p,q$ and $\mu$. Therefore, $\mathcal{H}\in \Phi_{\textnormal{w}}(\Omega)$ satisfies \normalfont{(A1')} and so \normalfont{(A1)}. The assertion of the proposition follows now from Theorem 6.4.7 of Harjulehto-H\"{a}st\"{o} \cite{Harjulehto-Hasto-2019}.
\end{proof}

A careful reading of the proof of Theorem \ref{theorem_density_1} shows that the boundedness of $\Omega$ is not used. This leads to the following result.

\begin{theorem}\label{theorem_density_1_unbounded}
	Let hypotheses \textnormal{(H')} be satisfied, where $\Omega\subseteq \R^N$, $N\geq 2$ is an unbounded domain, $0\leq \mu(\cdot) \in \Linf \cap C^{0,1}(\close)$ and condition \textnormal{(H)(ii)} is replaced by
	\begin{align*}
		\frac{q_+}{p_-}\leq 1+\frac{1}{N}.
	\end{align*}
	Then $C^\infty(\Omega)\cap\Wp{\mathcal{H}}$ is dense in $\Wp{\mathcal{H}}$.
\end{theorem}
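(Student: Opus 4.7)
The plan is to repeat the proof of Theorem~\ref{theorem_density_1} verbatim, observing that the boundedness of $\Omega$ was never used. The strategy is unchanged: verify that $\mathcal{H}\in\Phi_{\textnormal{w}}(\Omega)$ satisfies the four structural conditions \textnormal{(A0)}, \textnormal{(A1)}, \textnormal{(A2)} and \textnormal{(aDec)}, and then invoke Theorem~6.4.7 of Harjulehto-H\"{a}st\"{o} \cite{Harjulehto-Hasto-2019}, whose conclusion (density of $C^\infty(\Omega)\cap \Wp{\mathcal{H}}$ in $\Wp{\mathcal{H}}$) holds for arbitrary open subsets of $\R^N$.

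First I would dispatch the three easier conditions. Condition \textnormal{(A0)} amounts to $\mathcal{H}(x,2)\approx 1$ uniformly in $x\in\Omega$, which follows from the explicit hypothesis $\mu\in\Linf$ together with $1\leq\mathcal{H}(x,2)\leq 2^{p_+}+\|\mu\|_\infty 2^{q_+}$. Condition \textnormal{(aDec)} is inherited from the $\Delta_2$-inequality \eqref{Delta2-condition} via Lemma~2.2.6 of \cite{Harjulehto-Hasto-2019}, the constant $2^{q_+}$ being independent of $\Omega$. Condition \textnormal{(A2)} is local in $t$: on each bounded range $[0,s]$ the summand $\mu(x)t^{q(x)}$ is controlled by $t^{p(x)}$ with a constant depending only on $s$, $p_-$, $q_+$ and $\|\mu\|_\infty$, so Lemma~4.2.5 of \cite{Harjulehto-Hasto-2019} applies without modification.

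The substantive point, exactly as in the bounded case, is the verification of \textnormal{(A1')} (which, combined with \textnormal{(A0)}, yields \textnormal{(A1)} by Corollary~4.1.6 of \cite{Harjulehto-Hasto-2019}). The argument in Theorem~\ref{theorem_density_1} is intrinsically localised to balls $B$ of radius $r\leq 1$ with $|B|\leq 1$: the desired $\beta\in(0,1)$ is synthesised from $N$, $p_-$, $p_+$, $q_+$, $\|\mu\|_\infty$ and the Lipschitz constants $c_p,c_q,c_\mu$ of $p,q,\mu$ on $\close$, via the chain of estimates \eqref{eq_new_06}--\eqref{eq_new_07} together with $1+N-N\tfrac{q_+}{p_-}\geq 0$. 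All of these quantities are finite under the present hypotheses; in particular, the added assumption $\mu\in\Linf$ is precisely what is needed on an unbounded $\close$, where Lipschitz continuity no longer implies boundedness. Since the resulting $\beta$ depends only on these global constants, the same choice works simultaneously for every admissible ball in $\R^N$, regardless of whether $\Omega$ is bounded.

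The only anticipated obstacle is careful bookkeeping: one must confirm that each inequality in the original proof is either pointwise in $(x,t)$ or confined to a single ball of radius at most one, with no covert appeal to $|\Omega|<\infty$ or to compactness of $\close$. A line-by-line inspection shows this to be the case. Once \textnormal{(A0)}, \textnormal{(A1)}, \textnormal{(A2)} and \textnormal{(aDec)} are in place, Theorem~6.4.7 of \cite{Harjulehto-Hasto-2019} delivers the claimed density.
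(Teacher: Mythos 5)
Your proposal coincides with the paper's own argument: the paper establishes this theorem precisely by remarking that a careful reading of the proof of Theorem \ref{theorem_density_1} shows that the boundedness of $\Omega$ is never used, the added hypothesis $\mu\in\Linf$ serving exactly the role you identify, namely keeping the constants in \textnormal{(A0)}, \textnormal{(A2)} and in the choice of $\beta$ for \textnormal{(A1')} finite on an unbounded $\close$ before invoking Theorem 6.4.7 of Harjulehto-H\"{a}st\"{o} \cite{Harjulehto-Hasto-2019}. No further comment is needed.
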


Next, we are going to prove the density under weaker assumptions as in the Theorems \ref{theorem_density_1} and \ref{theorem_density_1_unbounded}. 
First we recall the following definition.

\begin{definition}
	We say that a function $g\colon\Omega \to \R$ satisfies the log-H\"older 
decay condition if there exists $g_\infty\in\R$ and a constant $c_g>0$ such that
	\begin{align*}
		|g(x)-g_\infty| \leq \frac{c_g}{\log(e+|x|)}\quad\text{for all }x\in\Omega.
	\end{align*}
\end{definition}

We suppose the following conditions which are weaker than \textnormal{(H')}:
\begin{enumerate}%[leftmargin=1cm]
	\item[\textnormal{(H'')}]
	$\Omega\subseteq \R^N$, $N\geq 2$, is an unbounded domain, $0\leq \mu(\cdot)\in \Linf$ and $p\colon \Omega \to [1, \infty)$, $q\colon\Omega \to [1, \infty)$ are bounded functions that are log-H\"older continuous and satisfy the log-H\"older decay condition with $p(x)\leq q(x)$ for all $x\in\Omega$.
\end{enumerate}

We start with a characterization of condition \textnormal{(A1)}.

\begin{proposition}\label{prop_A1}
	Let hypotheses \textnormal{(H'')} be satisfied. Then $\mathcal{H}(x, t) = t^{p(x)} + \mu(x) t^{q(x)}$ satisfies condition \textnormal{(A1)} if and only if there exists a constant $\beta >0$ such that
	\begin{align*}
		\beta \mu(y)^{\frac1{q(y)}}\le |x-y|^{N\l(\frac1{p(y)}-\frac1{q(y)}\r)}+ \mu(x)^{\frac{1}{q(x)}}
	\end{align*}
	for every $x, y \in \Omega$.
\end{proposition}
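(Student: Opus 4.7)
The plan is to rewrite condition (A1) via explicit two-sided bounds on $\mathcal{H}^{-1}(x,\cdot)$. Because $\max\{s^{p(x)},\mu(x)s^{q(x)}\}\leq \mathcal{H}(x,s)\leq 2\max\{s^{p(x)},\mu(x)s^{q(x)}\}$, inversion yields
\begin{equation*}
    \mathcal{H}^{-1}(x,t) \approx \min\bigl\{t^{1/p(x)},\,(t/\mu(x))^{1/q(x)}\bigr\},
\end{equation*}
with $(t/0)^{1/q(x)}:=+\infty$ and constants depending only on $p_-$, $q_+$ and $\|\mu\|_\infty$. I would first make this rigorous, and then exploit the log-H\"older continuity and log-H\"older decay of $p$ and $q$ to transfer exponents inside a ball: for $x,y\in B$ with $|B|\leq 1$ and $t\in[1,1/|B|]$, the bound $\log t\leq N|\log|x-y||$ together with $|1/p(x)-1/p(y)|\leq C/|\log|x-y||$ gives $t^{|1/p(x)-1/p(y)|}\leq e^{CN}$, and similarly for $q$. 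Moreover, for $|x-y|\geq 1$ the claimed pointwise inequality is automatic, since $N(1/p(y)-1/q(y))>0$ forces $|x-y|^{N(1/p(y)-1/q(y))}\geq 1$ while $\mu^{1/q(\cdot)}$ is uniformly bounded, so only the local regime is at stake.

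For the forward direction, given (A1) with constant $\beta_0\in(0,1)$ and arbitrary $x,y\in\Omega$ with $|x-y|$ small, I would take a ball $B$ of radius $\approx|x-y|$ containing both (so $|B|\approx|x-y|^N\leq 1$) and the extremal $t=1/|B|$. The approximation and log-H\"older convert $\beta_0 \mathcal{H}^{-1}(x,t)\leq\mathcal{H}^{-1}(y,t)$ into
\begin{equation*}
    \beta\min\bigl\{t^{1/p(y)},(t/\mu(x))^{1/q(y)}\bigr\}\leq \min\bigl\{t^{1/p(y)},(t/\mu(y))^{1/q(y)}\bigr\}.
\end{equation*}
The nontrivial case occurs when the right-hand min equals $(t/\mu(y))^{1/q(y)}$; then at least one of $\beta t^{1/p(y)}\leq(t/\mu(y))^{1/q(y)}$ or $\beta(t/\mu(x))^{1/q(y)}\leq(t/\mu(y))^{1/q(y)}$ must hold. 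Substituting $t\approx|x-y|^{-N}$ and rearranging yields $\beta\mu(y)^{1/q(y)}\leq|x-y|^{N(1/p(y)-1/q(y))}$ or $\beta\mu(y)^{1/q(y)}\leq\mu(x)^{1/q(x)}$, whose disjunction is equivalent (up to a factor $2$) to the claimed sum.

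For the converse, I reverse the argument. Fix $B$ with $|B|\leq 1$, $x,y\in B$ and $t\in[1,1/|B|]$. If $\mathcal{H}^{-1}(y,t)\approx t^{1/p(y)}$, then log-H\"older and the bound $\mathcal{H}^{-1}(x,t)\leq t^{1/p(x)}$ give (A1) at once. Otherwise $\mathcal{H}^{-1}(y,t)\approx(t/\mu(y))^{1/q(y)}$; the pointwise hypothesis forces at least one of its two summands to be $\gtrsim\mu(y)^{1/q(y)}$, and each alternative produces the lower bound $\mathcal{H}^{-1}(x,t)\gtrsim(t/\mu(y))^{1/q(y)}$: via the $\mu(x)^{1/q(x)}$ summand through the $q$-branch $(t/\mu(x))^{1/q(x)}$, and via the $|x-y|^{N(1/p(y)-1/q(y))}$ summand through the $p$-branch $t^{1/p(x)}$, the latter using precisely that $t\leq|B|^{-1}$. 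The main obstacle is the bookkeeping of constants across the two regimes of $\mathcal{H}^{-1}$ at both endpoints: verifying that $t=1/|B|$ is the worst case in the forward direction (which follows from $1/p(y)>1/q(y)$) and that the two summands jointly control every admissible $t$ in the reverse direction, with log-H\"older distortions absorbed into the final $\beta$.
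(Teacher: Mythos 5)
Your proposal follows essentially the same route as the paper: both start from $\mathcal{H}(x,t)\approx\max\{t^{p(x)},\mu(x)t^{q(x)}\}$, hence $\mathcal{H}^{-1}(x,t)\approx\min\{t^{1/p(x)},(t/\mu(x))^{1/q(x)}\}$, use log-H\"older continuity to transfer powers of $t$ between $x$ and $y$ on the range $t\in[1,1/|B|]$ with $|x-y|\lesssim\operatorname{diam}B$, reduce to the extremal scale $t\approx|x-y|^{-N}$, and translate the resulting min/max inequality into the sum form; the paper writes this as one chain of equivalences while you split it into the two implications, which is only a difference of presentation.

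Two points in your write-up need repair, though neither is fatal. First, in the forward direction your displayed ``converted'' inequality replaces $(t/\mu(x))^{1/q(x)}$ by $(t/\mu(x))^{1/q(y)}$; log-H\"older only controls $t^{|1/q(x)-1/q(y)|}$, not $\mu(x)^{|1/q(x)-1/q(y)|}$, and since $\mu(x)$ may be arbitrarily small this substitution is not legitimate as stated (it can only be salvaged on the branch where the $q$-term realizes the minimum). The correct move — and the one the paper makes — is to keep $\mu(x)^{-1/q(x)}$ untouched and only bound the factor $t^{1/q(x)-1/q(y)}$ by a constant; doing so, the two alternatives you then extract, $\beta\mu(y)^{1/q(y)}\lesssim|x-y|^{N(1/p(y)-1/q(y))}$ or $\beta\mu(y)^{1/q(y)}\lesssim\mu(x)^{1/q(x)}$ (note the exponent $1/q(x)$, which your converted display would not deliver), come out exactly as claimed. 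Second, in the converse the phrase ``produces the lower bound $\mathcal{H}^{-1}(x,t)\gtrsim(t/\mu(y))^{1/q(y)}$'' has the inequality reversed: what \textnormal{(A1)} requires, and what the mechanism you describe (the $\mu(x)^{1/q(x)}$ summand via the $q$-branch, the $|x-y|$ summand via the $p$-branch together with $t\le|B|^{-1}$) actually yields, is the upper bound $\mathcal{H}^{-1}(x,t)\lesssim(t/\mu(y))^{1/q(y)}\approx\mathcal{H}^{-1}(y,t)$. With these corrections your argument coincides with the paper's proof.
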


\begin{proof}
	Let us first observe that $\mathcal{H}(x, t) \approx \max\{t^{p(x)}, \mu(x) t^{q(x)} \}$ and hence
	\begin{align*}
		\mathcal{H}^{-1}(x, t) \approx \min\l\{ t^{\frac{1}{p(x)}}, \l(\frac{t}{\mu(x)}\r)^{\frac{1}{q(x)}}\r\}.
	\end{align*}

	After dividing by $t^{\frac{1}{q(y)}}$ condition \textnormal{(A1)} becomes
	\begin{align*}
		\beta \min\l\{t^{\frac1{p(x)} - \frac1{q(y)}}, \mu(x)^{-\frac1{q(x)}} t^{\frac1{q(x)}- \frac1{q(y)}}\r\} \leq \min\l\{ t^{\frac1{p(y)} - \frac1{q(y)}}, \mu(y)^{-\frac1{q(y)}}\r\}
	\end{align*}
	for  $x,y \in B\cap \Omega$, $|B| \leq 1$ and $t \in \l[1, \frac{1}{|B|}\r]$. 

	If $p(x) \le p(y)$, then
	\begin{align*}
		t^{\frac1{p(x)}} \le t^{\frac1{p(x)} - \frac1{p(y)}} t^{\frac1{p(y)}} \le |B|^{\frac1{p(y)}-\frac1{p(x)}} t^{\frac1{p(y)}} \lesssim   t^{\frac1{p(y)}},
	\end{align*}
	where we have used the  $\log$-H\"older continuity of $\frac{1}{p}$ and $t \in \l[1, \frac{1}{|B|}\r]$, see Lemma~4.16 in Diening-Harjulehto-H\"{a}st\"{o}-R$\mathring{\text{u}}$\v{z}i\v{c}ka \cite{Diening-Harjulehto-Hasto-Ruzicka-2011}. If $p(x) \ge p(y)$, then $t^{\frac1{p(x)}} \le t^{\frac1{p(y)}}$ since $t \ge 1$.
	Similarly we obtain  by the $\log$-H\"older continuity that $t^{\frac1{q(x)}- \frac1{q(y)}} \le C$. 

	Thus for \textnormal{(A1)} we need to verify that
	\begin{align*}
		\beta \min\l\{t^{\frac1{p(y)} - \frac1{q(y)}}, \mu(x)^{-\frac1{q(x)}} \r\} \le \min\l\{ t^{\frac1{p(y)} - \frac1{q(y)}}, \mu(y)^{-\frac1{q(y)}}\r\}
	\end{align*}
	for  $x,y \in B\cap \Omega$, $|B| \le 1$ and $t \in \l[1, \frac{1}{|B|}\r]$. 
	
	We may assume that $\diam(B) \le 2 |x-y|$.
	The case $\mu(y)^{-\frac{1}{q(y)}} \ge t^{\frac{1}{p(y)}-\frac{1}{q(y)}}$ is trivial, so the condition 
	is equivalent to 
	\begin{align*}
		\beta \min\l\{ t^{\frac1{p(y)}-\frac1{q(y)}}, \mu(x)^{-\frac1{q(x)}}\r \} \le \mu(y)^{-\frac1{q(y)}}
	\end{align*}
	for $\mu(y)^{-\frac{1}{q(y)}} < t^{\frac{1}{p(y)}-\frac{1}{q(y)}}$. Since the exponent of $t$ is positive, we only need to check the inequality for the upper bound of $t$, namely $t=\frac1{|B|}$. 
	Moreover, $|B| \approx |x-y|^N$. Thus the condition is further equivalent to 
	\begin{align*}
		\beta \min\l\{ |x-y|^{-N\l(\frac1{p(y)}-\frac1{q(y)}\r)}, \mu(x)^{-\frac1{q(x)}} \r\} \le \mu(y)^{-\frac1{q(y)}},
	\end{align*}
	that is, equivalent to
	\begin{align*}
		\beta \mu(y)^{\frac{1}{q(y)}}\leq \max\l\{ |x-y|^{N\l(\frac1{p(y)}-\frac1{q(y)}\r)}, \mu(x)^{\frac{1}{q(x)}} \r\}
		\approx |x-y|^{N\l(\frac1{p(y)}-\frac1{q(y)}\r)}+ \mu(x)^{\frac{1}{q(x)}}.
	\end{align*}
\end{proof}

Next we can give a sufficient condition for $\mathcal{H}(\cdot,\cdot)$ to 
satisfy assumption \normalfont{(A1)}.

\begin{proposition}\label{condition_A1}
	Let hypotheses \textnormal{(H'')} be satisfied and let in addition $q\colon \Omega\to [1, \infty)$ be $\frac{\alpha}{q_{-}}$-H\"older continuous and $\mu\colon\Omega \to [0, \infty)$ be $\alpha$-H\"older continuous. 
	If $\frac{q(x)}{p(x)} \le 1 + \frac{\alpha}{N}$, then $\mathcal{H}(x, t) 
= t^{p(x)} + \mu(x) t^{q(x)}$ satisfies condition \textnormal{(A1)}.
\end{proposition}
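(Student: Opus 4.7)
The plan is to invoke Proposition \ref{prop_A1} and verify that the pointwise inequality
\begin{align*}
	\beta\,\mu(y)^{\frac{1}{q(y)}} \le |x-y|^{N\l(\frac{1}{p(y)}-\frac{1}{q(y)}\r)} + \mu(x)^{\frac{1}{q(x)}}
\end{align*}
holds for some $\beta>0$ and all $x,y\in\Omega$. The first observation is that the assumption $\frac{q(x)}{p(x)}\le 1+\frac{\alpha}{N}$ translates into the exponent inequality
\begin{align*}
	N\l(\frac{1}{p(y)}-\frac{1}{q(y)}\r) = \frac{N(q(y)-p(y))}{p(y)q(y)} \le \frac{\alpha}{q(y)},
\end{align*}
so whenever $|x-y|\le 1$ we have $|x-y|^{\alpha/q(y)} \le |x-y|^{N(1/p(y)-1/q(y))}$. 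This is the bridge that will let Hölder information on $\mu$ (which produces exponents like $\alpha$) compete with the first term on the right-hand side.

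I would then split into two regimes. In the first regime, either $|x-y|\ge 1$ or $\mu(y)\le 2c_\mu |x-y|^\alpha$. If $|x-y|\ge 1$ the first term on the right dominates the bounded quantity $\mu(y)^{1/q(y)}\le \|\mu\|_\infty^{1/q_-}$ trivially (for small $\beta$), since the exponent $N(1/p(y)-1/q(y))$ is positive. Otherwise $\mu(y)^{1/q(y)}\le (2c_\mu)^{1/q(y)}|x-y|^{\alpha/q(y)} \le C |x-y|^{N(1/p(y)-1/q(y))}$ by the bridge inequality above, and we are done.

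In the second, harder regime $|x-y|<1$ and $\mu(y)>2c_\mu|x-y|^\alpha$, the $\alpha$-Hölder continuity of $\mu$ gives $\mu(x)\ge \mu(y)-c_\mu|x-y|^\alpha\ge \mu(y)/2$, so
\begin{align*}
	\mu(y)^{1/q(y)} \le 2^{1/q(y)} \mu(x)^{1/q(y)} = 2^{1/q(y)} \mu(x)^{1/q(x)} \cdot \mu(x)^{\frac{q(x)-q(y)}{q(x)q(y)}}.
\end{align*}
It remains to bound $\mu(x)^{(q(x)-q(y))/(q(x)q(y))}$ by a constant, and this is the main obstacle. Writing the factor as $\exp\!\bigl(\tfrac{q(x)-q(y)}{q(x)q(y)}\log\mu(x)\bigr)$, the $\frac{\alpha}{q_-}$-Hölder continuity of $q$ gives $|q(x)-q(y)|\le c_q|x-y|^{\alpha/q_-}$, while the regime hypothesis together with $\mu(x)\ge \mu(y)/2\ge c_\mu|x-y|^\alpha$ yields $|x-y|^{\alpha/q_-}\le (\mu(x)/c_\mu)^{1/q_-}$. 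Combining these,
\begin{align*}
	\l|\frac{q(x)-q(y)}{q(x)q(y)}\log\mu(x)\r| \le \frac{c_q}{q_-^2\, c_\mu^{1/q_-}}\,\mu(x)^{1/q_-}\l|\log\mu(x)\r|,
\end{align*}
and since $t\mapsto t^{1/q_-}|\log t|$ is bounded on $(0,\|\mu\|_\infty]$, the exponent is uniformly bounded. Choosing $\beta$ sufficiently small absorbs all the constants from both regimes, completing the verification of (A1).
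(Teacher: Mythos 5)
Your proof is correct, but it is organized differently from the paper's. The paper also reduces matters to Proposition \ref{prop_A1}, but then it proves the stronger, symmetric estimate $|\mu(x)^{1/q(x)}-\mu(y)^{1/q(y)}|\le |x-y|^{N\gamma}$ with $\gamma=\max\{\frac{1}{p(x)}-\frac{1}{q(x)},\frac{1}{p(y)}-\frac{1}{q(y)}\}$: it splits the difference by the triangle inequality at $\mu(x)^{1/q(y)}$, handles the variation of the exponent by the mean value theorem applied to $t\mapsto a^{t}$ (the derivative bound $a^{\xi}|\ln a|\lesssim 1$ being essentially the same elementary fact as your boundedness of $t^{1/q_-}|\log t|$ on $(0,\|\mu\|_\infty]$), handles the variation of $\mu$ via $|x^r-y^r|\le|x-y|^r$, and then uses the exponent bridge \eqref{hoelder_3}--\eqref{hoelder_5}, obtaining the inequality of Proposition \ref{prop_A1} with $\beta=1$. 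You instead verify the one-sided inequality directly through a dichotomy: if $\mu(y)\lesssim|x-y|^{\alpha}$ (or $|x-y|\ge 1$) the first term on the right dominates thanks to the same bridge $N(\frac{1}{p(y)}-\frac{1}{q(y)})\le\frac{\alpha}{q(y)}$, while otherwise $\mu(x)\ge\mu(y)/2$ and the change of exponent $\mu(x)^{1/q(y)-1/q(x)}$ is absorbed using $|q(x)-q(y)|\le c_q|x-y|^{\alpha/q_-}\lesssim\mu(x)^{1/q_-}$ together with the boundedness of $t^{1/q_-}|\log t|$. Your route avoids the mean value theorem, yields an explicit (non-unit) $\beta$, and covers all pairs $x,y\in\Omega$ including $|x-y|\ge1$, which matches the literal statement of Proposition \ref{prop_A1}; the paper's route gives slightly more, namely a genuine H\"older-type modulus for $x\mapsto\mu(x)^{1/q(x)}$ with exponent $N\gamma$. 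Two cosmetic points in your second regime: one should take $c_\mu>0$ without loss of generality so that the division by $c_\mu^{1/q_-}$ is legitimate, and one should note that $\mu(x)>0$ there (so $\log\mu(x)$ is defined) precisely because $\mu(x)\ge\mu(y)/2>c_\mu|x-y|^{\alpha}\ge0$, the case $x=y$ being trivial.
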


\begin{proof}
	From Proposition \ref{prop_A1} we know that \textnormal{(A1)} holds with 
$\beta =1$ if
	\begin{align*}
		\l|\mu(x)^{\frac{1}{q(x)}} - \mu(y)^{\frac{1}{q(y)}}\r| \leq |x-y|^{N \gamma} 
	\end{align*}
	for all $x$ and $y$ with $|x-y|\le 1$ where $\gamma =\max\big\{\frac1{p(y)} - \frac1{q(y)}, \frac1{p(x)} - \frac1{q(x)} \Big\}$.
	We may assume that $\frac1{p(y)} - \frac1{q(y)} \ge \frac1{p(x)} - \frac1{q(x)}$.
	First we  use the triangle inequality to obtain
	\begin{align}\label{hoelder_one}
		\begin{split}
			\l|\mu(x)^{\frac{1}{q(x)}} - \mu(y)^{\frac{1}{q(y)}}\r| 
			&= \l|\mu(x)^{\frac{1}{q(x)}} - \mu(x)^{\frac{1}{q(y)}} + \mu(x)^{\frac{1}{q(y)}}- \mu(y)^{\frac{1}{q(y)}}\r|\\
			&\leq \l|\mu(x)^{\frac{1}{q(x)}} - \mu(x)^{\frac{1}{q(y)}}\r| 
			+ \l|\mu(x)^{\frac{1}{q(y)}}- \mu(y)^{\frac{1}{q(y)}}\r|.
		\end{split}
	\end{align}
	We estimate the first term on the right-hand side of \eqref{hoelder_one}. To this end, let $f(t) = a^t$.  Then, by the mean value theorem, 
	$f(v) - f(u) = f'(\xi) (v-u)$ for some $\xi$ between $u$ and $v$.  We choose $a=\mu(x)$, $u =\frac{1}{q(x)}$ and 
	$v = \frac{1}{q(y)}$. Then $a \in [0, \|\mu\|_\infty]$ and $u, v \in [\frac{1}{q_+}, 1]$. 
	 
	 Next we show that $|f'(\xi)|$ is bounded. If $a\ge 1$, then
	 \begin{align*}
	 	\l|f'(\xi)\r| = a^\xi \ln (a) \le \|\mu\|_\infty \ln(\|\mu\|_\infty).
	 \end{align*}
	 For $a \in [0,1)$ we obtain 
	 \begin{align*}
	 	\l|f'(\xi)\r| = - a^\xi \ln (a) \le - a^{\frac{1}{q_+}} \ln (a).
	 \end{align*} 

	A simple calculation shows that $a \mapsto - a^{\frac{1}{q_+}} \ln (a)$ got it largest value in $[0, 1]$ at $e^{-q_+}$. Hence $|f'(\xi)| \leq  \frac{q_+}e$. Thus we have
	\begin{align}\label{hoelder_two}
		\begin{split}
			\l|\mu(x)^{\frac{1}{q(x)}} - \mu(x)^{\frac{1}{q(y)}}\r| 
			&= \l|f(v) - f(u)\r|
			\leq c\l| \frac{1}{q(x)} - \frac{1}{q(y)} \r|\\
			&\leq c|q(y) - q(x)| \leq c  c_q |x-y|^{\frac{\alpha}{q_-}},
		\end{split}
	\end{align}
	where $c$ is a constant depending on $a$ and $q$, the constant $c_q$ is from the $\frac{\alpha}{q_{-}}$-H\"older continuity of $q$ and $|x-y|\leq 
1$.
	
	From $\frac{q(x)}{p(x)} \le 1 + \frac{\alpha}N$ we obtain
	\begin{align}\label{hoelder_3}
		N \l(\frac1{p(y)} - \frac1{q(y)}\r) 
		\leq \frac{N}{q(y)}\l(\frac{q(y)}{p(y)} - 1\r)
		\leq \frac{N}{q(y)} \frac{\alpha}{N} 
		\leq \frac{\alpha}{q_-}.
	\end{align}
	Combining \eqref{hoelder_two} and \eqref{hoelder_3} gives
	\begin{align*}
		\begin{split}
			\l|\mu(x)^{\frac{1}{q(x)}} - \mu(x)^{\frac{1}{q(y)}}\r| 
			& \leq c\l(\|a\|_\infty, q_+\r)  c_q |x-y|^{N \l(\frac1{p(y)} - \frac1{q(y)}\r)}.
		\end{split}
	\end{align*}
	
	Let us now estimate the second term of the right-hand side of \eqref{hoelder_one}.  We use the inequality $|x^r- y^r| \le |x-y|^r$, where $x, y \ge 0$ and $r \in(0,1]$, in order to obtain
	\begin{align}\label{hoelder_4}
		\begin{split}
			\l|\mu(x)^{\frac{1}{q(y)}}- \mu(y)^{\frac{1}{q(y)}}\r|
			&\leq |\mu(x) - \mu(y)|^{\frac{1}{q(y)}} 
			\leq c_\mu^{\frac1{q_-}} |x-y|^{\frac{\alpha}{q(y)}},
		\end{split}
	\end{align}
	where the constant $c_\mu$ is from the $\alpha$-H\"older continuity of $\mu$.
	
	From $\frac{q(x)}{p(x)} \le 1 + \frac{\alpha}N$ we obtain
	\begin{align}\label{hoelder_5}
		N \l(\frac1{p(y)} - \frac1{q(y)}\r) 
		\leq \frac{N}{q(y)}\l(\frac{q(y)}{p(y)} - 1\r)
		\leq \frac{N}{q(y)} \frac{\alpha}{N} 
		= \frac{\alpha}{q(y)}.
	\end{align}
	Combining \eqref{hoelder_4} and \eqref{hoelder_5} gives
	\begin{align*}
		\begin{split}
			\l|\mu(x)^{\frac{1}{q(y)}}- \mu(y)^{\frac{1}{q(y)}}\r|
			& \leq c_\mu^{\frac1{q_-}} |x-y|^{N \l(\frac1{p(y)} - \frac1{q(y)}\r)}.
		\end{split}
	\end{align*}
\end{proof}

Now we are ready to prove the density of the smooth functions in the Musielak-Orlicz Sobolev space $\Wp{\mathcal{H}}$ when $\Omega$ is unbounded under the assumptions \textnormal{(H'')}.

\begin{theorem}\label{theorem_density_2}
	Let hypotheses \textnormal{(H'')} be satisfied and let in addition $q\colon \Omega\to [1, \infty)$ be $\frac{\alpha}{q_{-}}$-H\"older continuous and $\mu\colon\Omega \to [0, \infty)$ be $\alpha$-H\"older continuous. 
	If $\frac{q(x)}{p(x)} \le 1 + \frac{\alpha}{N}$, then $C^\infty(\Omega)\cap\Wp{\mathcal{H}}$ is dense in $\Wp{\mathcal{H}}$.
\end{theorem}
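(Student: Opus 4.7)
The plan is to invoke Theorem 6.4.7 of Harjulehto-H\"{a}st\"{o} \cite{Harjulehto-Hasto-2019} once more. That theorem does not require the boundedness of $\Omega$ and reduces the density of $C^\infty(\Omega) \cap \Wp{\mathcal{H}}$ in $\Wp{\mathcal{H}}$ to verifying that $\mathcal{H}$ is a weak $\Phi$-function satisfying the four conditions \textnormal{(A0)}, \textnormal{(A1)}, \textnormal{(A2)} and \textnormal{(aDec)}.

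First I would dispose of the easy three conditions essentially as in the proof of Theorem \ref{theorem_density_1}. Since $p_+, q_+ < \infty$ and $\mu \in \Linf$, the estimate $1 \le \mathcal{H}(x, 2) \le 2^{p_+} + \|\mu\|_\infty 2^{q_+}$ holds uniformly in $x \in \Omega$, so $\mathcal{H}(x,2)\approx 1$ and Corollary 3.7.5 of \cite{Harjulehto-Hasto-2019} gives \textnormal{(A0)}. The boundedness of $p$ and $q$ again yields the $\Delta_2$-condition as in \eqref{Delta2-condition}, and Lemma 2.2.6 of \cite{Harjulehto-Hasto-2019} then yields \textnormal{(aDec)}. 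Condition \textnormal{(A1)} is exactly the conclusion of Proposition \ref{condition_A1} under the hypotheses of the theorem, so nothing more needs to be done there.

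The only genuinely new step is the verification of \textnormal{(A2)}, which is where the log-H\"older decay assumption in \textnormal{(H'')} becomes essential. The log-H\"older decay gives limits $p_\infty, q_\infty \in [1,\infty)$ and quantitative rates $p(x) \to p_\infty$, $q(x) \to q_\infty$ as $|x| \to \infty$. Using the same pointwise equivalence $\mathcal{H}^{-1}(x, t) \approx \min\bigl\{ t^{1/p(x)}, (t/\mu(x))^{1/q(x)} \bigr\}$ that was exploited in the proof of Proposition \ref{prop_A1}, for $t$ bounded above by a fixed $s > 0$ the ratio $\mathcal{H}^{-1}(x,t)/\mathcal{H}^{-1}(y,t)$ is controlled by terms of the form $|1/p(x) - 1/p(y)|\,|\log t|$ and $|1/q(x) - 1/q(y)|\,|\log t|$. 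The log-H\"older continuity handles the close-range case $|x-y|$ small, while the log-H\"older decay handles the case where $|x|$ or $|y|$ is large, each against the factor $|\log t|$. One then constructs a non-negative $h \in \Lp{1} \cap \Linf$, with $h(x) \to 0$ as $|x| \to \infty$, chosen so that on the range $t \in [h(x)+h(y), s]$ the quantity $|\log t|$ is absorbed by the log-H\"older (decay) moduli of $p$ and $q$ and yields the required inequality $\beta\, \mathcal{H}^{-1}(x, t) \le \mathcal{H}^{-1}(y, t)$ for some $\beta \in (0,1]$.

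The main obstacle is precisely this construction of $h$: one must balance the log-H\"older decay rates of $p$ and $q$ against the behaviour of the weight $\mu \in \Linf$ and pick $h$ in $\Lp{1} \cap \Linf$ that controls both regimes $t^{1/p(x)}$ and $(t/\mu(x))^{1/q(x)}$ simultaneously, uniformly in $x, y \in \Omega$. Once \textnormal{(A2)} is in place, Theorem 6.4.7 of \cite{Harjulehto-Hasto-2019} delivers the density of $C^\infty(\Omega) \cap \Wp{\mathcal{H}}$ in $\Wp{\mathcal{H}}$ and the proof concludes.
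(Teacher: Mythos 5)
Your skeleton (Theorem 6.4.7 of Harjulehto--H\"ast\"o, with (A0), (aDec) as in Theorem \ref{theorem_density_1} and (A1) supplied by Proposition \ref{condition_A1}) matches the paper, but there is a genuine gap at the single point where you yourself locate the new work: condition (A2) is never actually verified. You describe the needed two-point inequality, declare the construction of $h\in\Lp{1}\cap\Linf$ and of $\beta$ to be ``the main obstacle'', and stop there; no $h$ is exhibited and no inequality is proved. Moreover, the sketch as written is shaky on two counts. First, in $\mathcal{H}^{-1}(x,t)\approx\min\{t^{1/p(x)},(t/\mu(x))^{1/q(x)}\}$ the weight $\mu$ appears, and under \textnormal{(H'')} $\mu$ is only bounded and $\alpha$-H\"older -- it has no limit or decay at infinity -- so for $x,y$ far apart the quotient $\mathcal{H}^{-1}(x,t)/\mathcal{H}^{-1}(y,t)$ is not controlled by the terms $|1/p(x)-1/p(y)|\,|\log t|$ and $|1/q(x)-1/q(y)|\,|\log t|$ alone; the branch comparison also involves $\mu(x)^{-1/q(x)}$ versus $\mu(y)^{-1/q(y)}$, for which you have no long-range information. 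Second, (A2) must hold for \emph{all} pairs $x,y\in\Omega$: log-H\"older continuity covers $|x-y|$ small and the decay condition covers $|x|,|y|$ large, but pairs at intermediate distance have to be treated by passing through the limit exponents (a Nekvinda-type argument with a uniform choice of $\beta$ and $h$), which your case split does not address.

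The paper avoids all of this with a much simpler reduction, the same one already used for Theorem \ref{theorem_density_1}: since $\mu\in\Linf$ and $q_+<\infty$, for every $s>0$ one has $t^{p(x)}+\mu(x)t^{q(x)}\approx t^{p(x)}$ for all $t\in[0,s]$, uniformly in $x$ (constants depending only on $s$, $\|\mu\|_\infty$, $q_+-p_-$), and (A2) only ever concerns such bounded ranges of $t$; Lemma 4.2.5 of Harjulehto--H\"ast\"o then reduces (A2) for $\mathcal{H}$ to the variable exponent case $t^{p(x)}$, where it holds precisely because $p$ (and $q$) are assumed in \textnormal{(H'')} to satisfy the log-H\"older decay condition. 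In particular $\mu$ plays no role in (A2). If you prefer your inverse-function route, the analogous shortcut is to bound $\mathcal{H}^{-1}(x,t)\le t^{1/p(x)}$ from above and $\mathcal{H}^{-1}(y,t)\ge c\,t^{1/p(y)}$ from below for $t\in[0,\min\{1,s\}]$ (using $(t/\mu(y))^{1/q(y)}\ge(t/\|\mu\|_\infty)^{1/q(y)}\ge c\,t^{1/p(y)}$), which again eliminates $\mu$ and leaves exactly the known (A2) statement for $t^{p(x)}$; without such a reduction, or an explicit construction of $h$, your argument for (A2) is incomplete.
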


\begin{proof}
	As in Theorem \ref{theorem_density_1} we will show the result by applying Theorem 6.4.7 of Harjulehto-H\"{a}st\"{o} \cite{Harjulehto-Hasto-2019}. 
In the same way as in Theorem \ref{theorem_density_1} we know that $\mathcal{H}\in \Phi_{\textnormal{w}}(\Omega)$ fulfills \normalfont{(A0)} , \normalfont{(A2)} and  \normalfont{(aDec)}. Finally, from Proposition \ref{condition_A1} we know that \normalfont{(A1)} is satisfied and the assertion of the theorem follows.
\end{proof}

\begin{remark}
	Note that for bounded domains the log-H\"older condition (local condition) in \textnormal{(H'')} is enough, we do not need the log-H\"older decay 
condition (condition near infinity).
\end{remark}

Comparison the assumptions of Theorems \ref{theorem_density_1} and \ref{theorem_density_2} for bounded domains with Lipschitz boundary, we see that the assumptions in Theorem \ref{theorem_density_2} are weaker than those in Theorem \ref{theorem_density_1}. Indeed the Lipschitz continuity can 
be replaced by certain H\"older or log-H\"older conditions and the inequality \eqref{density_weaker} implies
\begin{align*}
	\frac{q(x)}{p(x)} \le 1 + \frac{1}{N}\quad\text{for all }x\in\close \text{ and for }\alpha=1.
\end{align*}
In the unbounded case the situation is a bit different. The assumptions of Theorem \ref{theorem_density_1_unbounded} imply the ones of Theorem \ref{theorem_density_2} except the log-H\"older decay condition. Indeed, Lipschitz continuity does not imply the log-H\"older decay condition.\\

Let us now comment on the well-known eigenvalue problem for the $r$-Laplacian with homogeneous Dirichlet boundary condition and $1<r<\infty$ defined by
\begin{equation}\label{eigenvalue_problem}
\begin{aligned}
	-\Delta_r u& =\lambda|u|^{r-2}u\quad && \text{in } \Omega,\\
	u & = 0  &&\text{on } \partial \Omega.
\end{aligned}
\end{equation}

It is known that the first eigenvalue $\lambda_{1,r}$ of \eqref{eigenvalue_problem} is positive, simple, and isolated. Moreover, it can be variationally characterized through
\begin{align}\label{lambda_one}
	\lambda_{1,r} = \inf_{u \in W^{1,r}(\Omega)} \left \{\int_\Omega |\nabla u|^r \diff x : \int_\Omega |u|^r \diff x=1 \right \},
\end{align}
see L{\^e} \cite{Le-2006}. We will make use of the first eigenvalue in the statements of Theorems \ref{theorem_existence} and \ref{theorem_uniqueness}.

We now recall some definitions that we will use in the sequel.

\begin{definition}\label{SplusPM}
	Let $X$ be a reflexive Banach space, $X^*$ its dual space and denote by $\langle \cdot \,, \cdot\rangle$ its duality pairing. Let $A\colon X\to X^*$, then $A$ is called
	\begin{enumerate}[leftmargin=1cm]
		\item[\textnormal{(i)}]
			to satisfy the $\Ss$-property if $u_n \weak u$ in $X$ and $\limsup_{n\to \infty} \langle Au_n,u_n-u\rangle \leq 0$ imply $u_n\to u$ in $X$;
		\item[\textnormal{(ii)}]
			pseudomonotone if $u_n \weak u$ in $X$ and $\limsup_{n\to \infty} \langle Au_n,u_n-u\rangle \leq 0$ imply 
			\begin{align*}
				\liminf_{n \to \infty} \langle Au_n,u_n-v\rangle \geq \langle Au , u-v\rangle \quad\text{for all }v \in X;
			\end{align*}
		\item[\textnormal{(iii)}]
			coercive if there exists some function $g\colon[0,\infty) \to \R$ such 
that $\lim_{t \to + \infty} g(t) = + \infty$ and
			\begin{align*}
				\frac{\langle Au, u \rangle}{\|u\|_X} \geq g(\| u \|_X ) \text{ for all } u \in X.
			\end{align*}
	\end{enumerate}
\end{definition}

\begin{remark} \label{RemarkPseudomonotone}
	Note that if the operator $A\colon X \to X^*$ is bounded, then the definition of pseudomonotonicity in Definition \ref{SplusPM} \textnormal{(ii)} 
is equivalent to $u_n \weak u$ in $X$ and $\limsup_{n\to \infty} \langle Au_n,u_n-u\rangle \leq 0$ imply $Au_n \weak Au$ and $\langle Au_n,u_n\rangle \to \langle Au,u\rangle$. We will use this equivalent condition for bounded operators in Section \ref{section_4}.
\end{remark}

Our existence result is based on the following surjectivity result for pseudomonotone operators, see, for example, Papageorgiou-Winkert \cite[Theorem 6.1.57]{Papageorgiou-Winkert-2018}.

\begin{theorem}\label{theorem_pseudomonotone}
	Let $X$ be a real, reflexive Banach space, let $A\colon X\to X^*$ be a pseudomonotone, bounded, and coercive operator, and $b\in X^*$. Then, a solution of the equation $Au=b$ exists.
\end{theorem}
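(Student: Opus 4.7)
My plan is to follow the classical Galerkin scheme, which is the standard route for this surjectivity theorem (see, e.g., Brezis's original argument or Lions's \emph{Quelques méthodes}). The idea is to reduce to finite dimensions, solve there via a Brouwer-type argument, and then pass to the limit exploiting boundedness, coercivity, and pseudomonotonicity. Without loss of generality I will work in a separable closed subspace containing $b$ in the dual pairing (in PDE applications $X$ is separable anyway), and fix an increasing sequence of finite-dimensional subspaces $X_1 \subseteq X_2 \subseteq \cdots \subseteq X$ such that $\overline{\bigcup_{n \in \N} X_n} = X$.

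First, I would construct Galerkin solutions. For fixed $n$, the restriction $A_n \colon X_n \to X_n^*$ of $A$ (composed with the restriction $X^* \to X_n^*$) is continuous: since $A$ is bounded and pseudomonotone, it is demicontinuous, and weak convergence in $X_n^*$ coincides with strong convergence because $X_n$ is finite-dimensional. Coercivity yields $R_n > 0$ with $\langle Au - b, u \rangle > 0$ whenever $u \in X_n$ and $\|u\|_X = R_n$. Identifying $X_n \cong X_n^*$ via an inner product, the classical consequence of Brouwer's fixed point theorem (the \emph{acute angle lemma}, cf.~Papageorgiou–Winkert \cite[Corollary 6.1.22]{Papageorgiou-Winkert-2018}) then produces $u_n \in X_n$ with $\|u_n\|_X \le R_n$ satisfying the Galerkin identity
\begin{equation*}
    \langle A u_n, v \rangle = \langle b, v \rangle \quad \text{for all } v \in X_n.
\end{equation*}

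Next, I would establish boundedness and extract a candidate limit. Testing the Galerkin identity with $v = u_n$ and invoking coercivity one more time forces $\{u_n\}_{n \in \N}$ to be bounded in $X$; otherwise $\langle A u_n, u_n\rangle / \|u_n\|_X \to \infty$ would contradict $\langle A u_n, u_n \rangle = \langle b, u_n \rangle \le \|b\|_{X^*} \|u_n\|_X$. By reflexivity of $X$, along a subsequence (not relabeled) $u_n \weak u$ in $X$, and by boundedness of $A$, along a further subsequence $A u_n \weak \xi$ in $X^*$.

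Finally, I would identify $\xi = b = Au$ via pseudomonotonicity. For every fixed $w \in X_m$, the Galerkin identity gives $\langle A u_n, w \rangle = \langle b, w \rangle$ as soon as $n \ge m$; passing to the limit, $\langle \xi, w \rangle = \langle b, w \rangle$ on the dense set $\bigcup_n X_n$, hence $\xi = b$. Combining this with $\langle A u_n, u_n\rangle = \langle b, u_n\rangle \to \langle b, u\rangle$, I obtain
\begin{equation*}
    \limsup_{n \to \infty} \langle A u_n, u_n - u \rangle = \lim_{n \to \infty} \langle b, u_n \rangle - \lim_{n \to \infty} \langle A u_n, u \rangle = \langle b, u \rangle - \langle b, u \rangle = 0.
\end{equation*}
Since $A$ is bounded and pseudomonotone, the equivalent characterization stated in Remark~\ref{RemarkPseudomonotone} yields $A u_n \weak A u$, which combined with $A u_n \weak b$ forces $Au = b$.

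The main technical subtlety—and what I would have to be careful about—is the interplay between boundedness of $A$ and pseudomonotonicity needed to rigorously deduce $Au = b$ from the $\limsup$ condition (rather than only the inequality in Definition~\ref{SplusPM}(ii)). The verification that the bounded pseudomonotone alternative formulation in Remark~\ref{RemarkPseudomonotone} applies is precisely the step that converts a one-sided limit inequality into the desired equation. Apart from that, the only nontrivial input is the finite-dimensional existence via Brouwer, which is standard.
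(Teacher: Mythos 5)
The paper does not actually prove this theorem: it is quoted from Papageorgiou--Winkert \cite[Theorem 6.1.57]{Papageorgiou-Winkert-2018}, so your Galerkin argument is being compared against a citation rather than an in-paper proof. That said, the scheme you follow is the classical one (Brezis/Lions/Zeidler), and its core steps are sound: demicontinuity of $A$ from boundedness plus pseudomonotonicity, the acute angle lemma on each $X_n$, the uniform bound from coercivity via $\langle Au_n,u_n\rangle=\langle b,u_n\rangle$, the identification $\xi=b$ on $\bigcup_n X_n$, the computation $\limsup_{n\to\infty}\langle Au_n,u_n-u\rangle=0$, and the use of the bounded-operator reformulation of pseudomonotonicity (Remark \ref{RemarkPseudomonotone}) to conclude $Au=b$. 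For a \emph{separable} reflexive space $X$ this is a complete and correct proof.

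The genuine gap is the opening reduction. The theorem is stated for an arbitrary real reflexive Banach space, and ``without loss of generality I work in a separable closed subspace'' is not a loss-free reduction (nor is ``containing $b$ in the dual pairing'' meaningful, since $b\in X^*$). If $Y\subsetneq X$ is a closed separable subspace, then the restricted operator $i_Y^*\circ A|_Y\colon Y\to Y^*$ is indeed still bounded, coercive and pseudomonotone, so your argument produces $u\in Y$ with $\langle Au-b,v\rangle=0$ for all $v\in Y$; but this is only the equation $i_Y^*(Au)=i_Y^* b$ in $Y^*$, and there is no reason why $Au-b$ should annihilate all of $X$. Hence the separable case does not imply the general statement by this route. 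To prove the theorem as stated one must either add separability as a hypothesis (which would suffice for this paper, since $W^{1,\mathcal{H}}_0(\Omega)$ is separable by Theorem \ref{prop_complete2}), or run the Galerkin scheme over the directed family of \emph{all} finite-dimensional subspaces of $X$ and pass to the limit using weak compactness of the ball together with the Eberlein--\v{S}mulian theorem to recover sequences on which pseudomonotonicity can be applied (Browder's argument); that is an additional idea, not a harmless normalization, and it is precisely what your proposal is missing.
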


%*********************************************************************
%*********************************************************************
\section{Properties of the variable exponent double phase operator}\label{section_3}
%*********************************************************************
%*********************************************************************

In this section we introduce the new double phase operator $A$ related to 
our problem \eqref{problem8} and its corresponding energy functional $I$ given in \eqref{integral_minimizer}. To this end, let $A\colon \Wpzero{\mathcal{H}}\to \Wpzero{\mathcal{H}}^*$ be given by
\begin{align*}
	\l\lan A(u),v \r\ran_{\mathcal{H}}=\into \l( |\nabla u|^{p(x)-2}\nabla 
u \cdot \nabla v +\mu(x) |\nabla u|^{q(x)-2}\nabla u \cdot \nabla v\r)\diff x,
\end{align*}
for all $u,v \in \Wpzero{\mathcal{H}}$, where $\lan\cdot\,,\cdot\ran_{\mathcal{H}}$ denotes the duality pairing between $\Wpzero{\mathcal{H}}$ and 
its dual space $\Wpzero{\mathcal{H}}^*$. As mentioned in the Introduction, the energy functional $I\colon \Wpzero{\mathcal{H}}\to\R$ related to $A$ is given  by
\begin{align*}%\label{energy_functional}
	I(u)=\into \l( \frac{|\nabla u|^{p(x)}}{p(x)} 
	+ \mu(x) \frac{|\nabla u|^{q(x)}}{q(x)}\r)\diff x
\end{align*}
for all $u \in \Wpzero{\mathcal{H}}$.

\begin{proposition}\label{proposition-C1Energy}
	Let hypothesis \eqref{condition_1} be satisfied. Then the functional $I$ 
is well-defined and of class $C^1$ with $I'(u) = A(u)$.	
\end{proposition}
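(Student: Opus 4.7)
The plan is to split the proof into three parts: well-definedness, computation of the Gâteaux derivative, and continuity of the derivative map (which upgrades Gâteaux to Fréchet and gives $C^1$).

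First, well-definedness of $I$ is immediate: for $u \in \Wpzero{\mathcal{H}}$ one has $|\nabla u| \in \Lp{\mathcal{H}}$, so by Proposition \ref{prop_delta_two_and_modular}\,(i) (applicable via the $\Delta_2$-estimate \eqref{Delta2-condition})
\[
I(u) \le \frac{1}{p_-}\int_\Omega |\nabla u|^{p(x)}\diff x + \frac{1}{q_-}\int_\Omega \mu(x)|\nabla u|^{q(x)}\diff x \le \frac{1}{p_-}\rho_{\mathcal H}(\nabla u) < \infty.
\]

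Second, I would establish Gâteaux differentiability with derivative $A(u)$. Fix $u,v \in \Wpzero{\mathcal{H}}$ and $0<|t|\le 1$. Writing the integrand of $I$ as $\Psi(x,\xi) = \tfrac{|\xi|^{p(x)}}{p(x)} + \mu(x)\tfrac{|\xi|^{q(x)}}{q(x)}$, the mean value theorem gives, for each $x$, some $\tau(x)\in[0,t]$ with
\[
\frac{\Psi(x,\nabla u + t\nabla v) - \Psi(x,\nabla u)}{t}
= \bigl(|w_\tau|^{p(x)-2} + \mu(x)|w_\tau|^{q(x)-2}\bigr)w_\tau \cdot \nabla v,
\]
where $w_\tau = \nabla u + \tau(x)\nabla v$. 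Since $|w_\tau|\le|\nabla u|+|\nabla v|$, an application of Young's inequality yields the pointwise bound
\[
\Bigl|\tfrac{\Psi(x,\nabla u + t\nabla v)-\Psi(x,\nabla u)}{t}\Bigr| \le C\Bigl(|\nabla u|^{p(x)} + |\nabla v|^{p(x)} + \mu(x)|\nabla u|^{q(x)} + \mu(x)|\nabla v|^{q(x)}\Bigr),
\]
whose right-hand side lies in $L^1(\Omega)$ because $\rho_{\mathcal H}(\nabla u),\rho_{\mathcal H}(\nabla v)<\infty$. Dominated convergence then gives $\lim_{t\to 0}\tfrac{I(u+tv)-I(u)}{t} = \langle A(u),v\rangle_{\mathcal{H}}$.

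Third, I would show that $A\colon \Wpzero{\mathcal{H}}\to \Wpzero{\mathcal{H}}^*$ is continuous, which then promotes Gâteaux differentiability to Fréchet differentiability and yields $I\in C^1$. Suppose $u_n\to u$ in $\Wpzero{\mathcal{H}}$. Then $\nabla u_n\to\nabla u$ in $\Lp{\mathcal{H}}$, and by Proposition \ref{proposition_embeddings}\,(i),(vi) also in $\Lp{p(\cdot)}$ and in the seminormed space $L^{q(\cdot)}_\mu(\Omega)$. Passing to a subsequence, $\nabla u_n \to \nabla u$ a.e. Setting $F_n(x):=|\nabla u_n|^{p(x)-2}\nabla u_n$ and $G_n(x):=\mu(x)^{1/q'(x)}|\nabla u_n|^{q(x)-2}\nabla u_n$, these converge a.e. to the corresponding limits $F$ and $G$, and by the modular convergences above the sequences $\{|F_n|^{p'(x)}\}$ and $\{|G_n|^{q'(x)}\}$ are uniformly integrable on $\Omega$ (a Vitali-type argument via the inequality $\rho_{p'(\cdot)}(F_n-F)\le C(\rho_{p(\cdot)}(\nabla u_n - \nabla u))$, plus the analogue in the $\mu$-weighted setting). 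Vitali's theorem (as used already in Proposition \ref{proposition_modular_properties}\,(viii)) then gives $F_n\to F$ in $\Lp{p'(\cdot)}$ and $G_n\to G$ in the corresponding weighted dual. Using Hölder's inequality for $L^{p(\cdot)}$ and for $L^{q(\cdot)}_\mu$ (the latter is classical, see e.g.\ the inequality used in Proposition \ref{proposition_embeddings}\,(vi)), one estimates
\[
|\langle A(u_n)-A(u),v\rangle_{\mathcal H}| \le 2\|F_n-F\|_{p'(\cdot)}\|\nabla v\|_{p(\cdot)} + 2\|G_n-G\|_{L^{q'(\cdot)}_\mu}\|\nabla v\|_{L^{q(\cdot)}_\mu},
\]
and the right-hand side tends to zero uniformly for $\|v\|_{1,\mathcal H,0}\le 1$ by the embeddings of Proposition \ref{proposition_embeddings}. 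A subsequence argument recovers the full sequence and yields $A(u_n)\to A(u)$ in $\Wpzero{\mathcal H}^*$.

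The main obstacle is the continuity step: one must make precise the Nemytskii-type convergence of $|\nabla u_n|^{p(x)-2}\nabla u_n$ and of the $\mu$-weighted analogue to their limits in the appropriate (dual) Musielak-Orlicz spaces, with variable exponents and a non-bounded weight. Once uniform integrability of the modulars is in hand (which is where the $\Delta_2$-condition \eqref{Delta2-condition} does the heavy lifting), Vitali's theorem finishes the argument cleanly.
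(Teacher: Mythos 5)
Your proof follows essentially the same route as the paper: the same modular bound for well-definedness, the mean value theorem plus dominated convergence for the G\^ateaux derivative (you dominate via Young's inequality where the paper checks integrability of the dominating function via H\"older's inequality --- an immaterial variation), and continuity of $A$ via convergence in measure plus uniform integrability and Vitali's theorem, which upgrades G\^ateaux to $C^1$. The one flaw is your parenthetical justification of the uniform integrability: the inequality $\varrho_{p'(\cdot)}(F_n-F)\le C\,\varrho_{p(\cdot)}(\nabla u_n-\nabla u)$ is false whenever $p(x)>2$ on a set of positive measure; pointwise one only has $\bigl||a|^{p-2}a-|b|^{p-2}b\bigr|\le c\,(|a|+|b|)^{p-2}|a-b|$, and taking $a=M+1$, $b=M$ with $M\to\infty$ shows that no bound of the form $\bigl||a|^{p-2}a-|b|^{p-2}b\bigr|^{p'}\le C|a-b|^{p}$ can hold. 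The correct (and simpler) justification, which is exactly the paper's, is the elementary bound
\begin{align*}
	\abs{\abs{\nabla u_n}^{p(x)-2}\nabla u_n-\abs{\nabla u}^{p(x)-2}\nabla u}^{\frac{p(x)}{p(x)-1}}
	\le 2^{\frac{p_+}{p_- -1}-1}\l(\abs{\nabla u_n}^{p(x)}+\abs{\nabla u}^{p(x)}\r),
\end{align*}
combined with the uniform integrability of $\{\abs{\nabla u_n}^{p(x)}\}_{n\in\N}$, which follows from $\varrho_{p(\cdot)}(\nabla u_n-\nabla u)\to 0$; the same correction applies to your $\mu$-weighted term $G_n$ after the splitting $\mu=\mu^{\frac{1}{q(x)}}\mu^{\frac{q(x)-1}{q(x)}}$. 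With that replacement your argument coincides with the paper's proof.
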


\begin{proof}
	The energy functional $I$ is well defined for any $u \in W_0^{\mathcal{H}}(\Omega)$ since
	\begin{equation*}
		0 \leq \frac{\rho_{\mathcal{H}}(\nabla u)}{q_+} \leq I(u) \leq \frac{\rho_{\mathcal{H}}(\nabla u)}{p_-} < \infty.
	\end{equation*}

	The Gateaux derivative is given by $A$ since for any $u , h \in W_0^{\mathcal{H}}(\Omega)$, some $t \in \R$ and some $\theta_{x,t} \in (0,1)$ given by the Mean Value Theorem, we have
	\begin{align*}
		\into \frac{\abs{\nabla u + t \nabla h}^{p(x)} - \abs{\nabla u}^{p(x)}}{t p(x)} \diff x 
		& = \into \abs{\nabla u + \theta_{x,t} t \nabla h}^{p(x)-2} \left( \nabla u + \theta_{x,t} t \nabla h \right) \cdot \nabla h \diff x \\
		& \xrightarrow{t \to 0} \into \abs{\nabla u }^{p(x)-2}  \nabla u  \cdot 
\nabla h \diff x.
	\end{align*}
	The limit follows from the Dominated Convergence Theorem, Proposition \ref{proposition_embeddings} \textnormal{(i)}, H\"older's inequality and Proposition \ref{proposition_1} \textnormal{(iii)} and \textnormal{(iv)}, as for $0 < \abs{t} < t_0$, it holds
	\begin{align*}
		& \abs{\nabla u + \theta_{x,t} t \nabla h}^{p(x)-2} \left( \nabla u + \theta_{x,t} t \nabla h \right) \cdot \nabla h
		\leq 2^{p_+ - 1} \left( \abs{\nabla u}^{p(x) -1} + t_0 \abs{\nabla h}^{p(x) -1} \right) \abs{ \nabla h }
	\end{align*}
	and
	\begin{align*}
		& \into \abs{\nabla u}^{p(x) -1} \abs{ \nabla h } \diff x 
		\leq 2 \norm{\abs{\nabla u}^{p(\cdot) -1}}_{\frac{p(\cdot)}{p(x) -1}} \norm{\nabla h}_{p(\cdot)} \\
		& \qquad \qquad \qquad  \qquad \quad \;\; \leq 2  \l(\varrho_{p(\cdot)} 
(\abs{\nabla u})\r) ^{ a }    \norm{\nabla h}_{p(\cdot)} 
		\leq 2  \norm{\nabla u}_{p(\cdot)}^{ b }    \norm{\nabla h}_{p(\cdot)},
	\end{align*}
	where $a,b>0$ are the exponents depending on the cases of Proposition \ref{proposition_1} \textnormal{(iii)} and \textnormal{(iv)}.
	The same arguments work on the terms with exponent $q(\cdot)$ by using Proposition \ref{proposition_embeddings} \textnormal{(vi)} and splitting $\mu(x) = \mu(x)^{\frac{1}{q(x)}} \mu(x)^{\frac{q(x)-1}{q(x)}}$ for using H\"older's inequality.
	
	The $C^1$-property follows since for any sequence $u_n \to u$ in $W_0^{\mathcal{H}}(\Omega)$ and $h \in W_0^{\mathcal{H}}(\Omega)$ with $\norm{h}_{1,\mathcal{H}} = 1$, we have by H\"older's inequality and Proposition 
\ref{proposition_embeddings} \textnormal{(i)}
	\begin{align*}
		& \into \left(  \abs{\nabla u_n }^{p(x)-2}  \nabla u_n - \abs{\nabla u }^{p(x)-2}  \nabla u \right)   \cdot \nabla h \diff x \\
		& \leq 2 \norm{\abs{\abs{\nabla u_n }^{p(\cdot)-2}  \nabla u_n - \abs{\nabla u }^{p(\cdot)-2}  \nabla u }}_{\frac{p(\cdot)}{p(\cdot)-1}}
		\norm{\nabla h}_{p(\cdot)} \xrightarrow{n \to \infty} 0.
	\end{align*}
	The convergence in $L^{\frac{p(\cdot)}{p(\cdot)-1}} (\Omega)$ follows by 
Proposition \ref{proposition_1} \textnormal{(v)} and Vitali's Theorem (see Bogachev \cite[Theorem 4.5.4]{Bogachev-2007}) since we have the convergence in measure because of $\nabla u_n \to \nabla u$ in $L^{p(\cdot)}(\Omega)$ and we have the uniform integrability by the uniform integrability of the sequence $\{\abs{\nabla u_n}^{p(x)}\}_{n\in\N}$ due to the same convergence in $L^{p(\cdot)}$ as before and 
	\begin{align*}
		\abs{\abs{\nabla u_n }^{p(x)-2}  \nabla u_n - \abs{\nabla u }^{p(x)-2}  
\nabla u }^{\frac{p(x)}{p(x)-1}}
		\leq 2^{\frac{p_+}{p_- -1} -1} \left( \abs{\nabla u_n }^{p(x)} + \abs{\nabla u }^{p(x)}  \right). 
	\end{align*}
	The same arguments work on the terms with exponents $q(x)$  by using Proposition \ref{proposition_1}\textnormal{(iii)}, \textnormal{(iv)}, Proposition \ref{proposition_embeddings}\textnormal{(vi)} and splitting $\mu(x) 
= \mu(x)^{\frac{1}{q(x)}} \mu(x)^{\frac{q(x)-1}{q(x)}}$ again for using 
H\"older's inequality.
\end{proof}

Before we give the main properties of the operator, we first state a general version of the reverse H\"older inequality. Since we did not find any 
reference, we will also give the proof for it.

\begin{lemma}\label{LemmaReverseHoelderVarExp}
	Let $(S,\Sigma,\lambda)$ be a measure space with $\lambda(S)>0$ and let $r\colon S \to [1,\infty)$ be measurable with $1<r_{-} := \essinf_S r \leq r_{+} := \esssup_S r < \infty$. Then for any measurable functions $f,g\colon S \to \mathbb{K}$  such that $g(s) \neq 0$ $\mu$-a.e. it holds
	\begin{align*}
		& \max \left\lbrace \norm{fg}_1^{\frac{1}{r_{-}}} , \norm{fg}_1^{\frac{1}{r_{+}}}  \right\rbrace \\
		& \geq \left[ \frac{1}{r_-} + \frac{1}{r'_-} \right]^{-1} 	\norm{\abs{f}^{\frac{1}{r(\cdot)}}}_1 \min \left\lbrace  \norm{\abs{g}^\frac{-1}{r(\cdot)-1}}_1^{ \frac{1- r_+}{r_-} } ,  \norm{\abs{g}^\frac{-1}{r(\cdot)-1}}_1^{ \frac{1- r_-}{r_+} } \right\rbrace \\
		& \geq \frac{1}{2} \norm{\abs{f}^{\frac{1}{r(\cdot)}}}_1 \min \left\lbrace  \norm{\abs{g}^\frac{-1}{r(\cdot)-1}}_1^{ \frac{1- r_+}{r_-} } , \norm{\abs{g}^\frac{-1}{r(\cdot)-1}}_1^{ \frac{1- r_-}{r_+} } \right\rbrace.
	\end{align*}
\end{lemma}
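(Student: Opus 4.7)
The inequality is a variable-exponent reverse H\"older-type estimate, and the natural approach is to factor the integrand and apply the standard variable-exponent H\"older inequality. The second inequality of the claim is immediate: since $r_->1$, one has $1/r_- + 1/r'_- = 1/r_- + (1 - 1/r_+) \le 2$, so its reciprocal is at least $1/2$.

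For the first inequality, the starting point is the pointwise identity
\begin{equation*}
|f(s)|^{1/r(s)} = |f(s)g(s)|^{1/r(s)}\cdot |g(s)|^{-1/r(s)},
\end{equation*}
valid $\lambda$-a.e.\,since $g \neq 0$ a.e. Integrating and applying the variable-exponent H\"older inequality with conjugate pair $(r(\cdot), r'(\cdot))$ (whose proof is purely pointwise and carries over to any measure space $(S,\Sigma,\lambda)$) yields
\begin{equation*}
\bigl\| |f|^{1/r(\cdot)} \bigr\|_1 \le \Bigl[\tfrac{1}{r_-} + \tfrac{1}{r'_-}\Bigr] \,\bigl\| |fg|^{1/r(\cdot)} \bigr\|_{r(\cdot)} \,\bigl\| |g|^{-1/r(\cdot)} \bigr\|_{r'(\cdot)}.
\end{equation*}
Direct computation gives $\varrho_{r(\cdot)}(|fg|^{1/r(\cdot)}) = \|fg\|_1$ and, using the identity $r'(s)/r(s) = 1/(r(s)-1)$, also $\varrho_{r'(\cdot)}(|g|^{-1/r(\cdot)}) = \bigl\| |g|^{-1/(r(\cdot)-1)} \bigr\|_1 =: B$.

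Next, I would apply the unit-ball inequalities of Proposition~\ref{proposition_1}(iii)-(iv) to convert these modular identities back to Luxemburg-norm bounds (the proofs of those inequalities rely only on the definition of the Luxemburg norm and the elementary pointwise bounds $t^{r_+}\le t^{r(s)} \le t^{r_-}$ for $t\le 1$, and the reverse for $t\ge 1$, hence are valid on any measure space). This gives
\begin{equation*}
\bigl\| |fg|^{1/r(\cdot)} \bigr\|_{r(\cdot)} \le \max\bigl\{\|fg\|_1^{1/r_-},\|fg\|_1^{1/r_+}\bigr\},
\end{equation*}
and an analogous upper bound for $\| |g|^{-1/r(\cdot)} \|_{r'(\cdot)}$ by $\max\{B^{1/r'_-}, B^{1/r'_+}\}$; the relations $1/r'_\pm = 1 - 1/r_\mp$ rewrite these as powers of $B$ with natural exponents $(r_\pm - 1)/r_\pm$. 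Inverting and combining produces a lower bound for $\max\{\|fg\|_1^{1/r_-}, \|fg\|_1^{1/r_+}\}$ that involves the natural exponents $(1-r_-)/r_-$ and $(1-r_+)/r_+$ in the minimum; a case comparison in $B<1$ versus $B>1$ then shows that the stated (slightly looser) exponents $(1-r_+)/r_-$ and $(1-r_-)/r_+$ are always dominated by the natural ones, so the stated inequality follows.

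The main obstacle is the bookkeeping in this case analysis: one must split according to whether each of the two Luxemburg norms (equivalently $\|fg\|_1$ and $B$) exceeds $1$, in each sub-case identify which of the exponents $r_\pm$, $r'_\pm$ is active, and finally verify that the natural exponents coming out of the norm--modular conversion dominate those appearing in the stated minimum in every regime, so that the claimed inequality is recovered after one last inversion.
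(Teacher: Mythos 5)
Your proposal is correct and follows essentially the same route as the paper's proof: factor $|f|^{1/r(\cdot)}=|fg|^{1/r(\cdot)}\,|g|^{-1/r(\cdot)}$, apply the variable-exponent H\"older inequality with constant $\frac{1}{r_-}+\frac{1}{r'_-}\le 2$, compute the modulars $\|fg\|_1$ and $\||g|^{-1/(r(\cdot)-1)}\|_1$, convert norms to modulars via the unit-ball inequalities, and invert. The only difference is cosmetic: you first get the natural exponents $(1-r_\pm)/r_\pm$ and then verify by the $B\lessgtr 1$ case check that the stated mixed exponents are dominated, a step the paper absorbs implicitly into its single displayed estimate.
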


\begin{proof}
	Applying H\"older's inequality one gets
	\begin{equation*}
		\norm{\abs{f}^{\frac{1}{r(\cdot)}}}_1  = \norm{\abs{fg}^{\frac{1}{r(\cdot)}}\abs{g}^{\frac{-1}{r(\cdot)}}}_1
		\leq \left[ \frac{1}{r_-} + \frac{1}{r'_-} \right]
		\norm{\abs{fg}^{\frac{1}{r(\cdot)}}}_{r(\cdot)} \norm{\abs{g}^{\frac{-1}{r(\cdot)}}}_{\frac{r(\cdot)}{r(\cdot)-1}}.
	\end{equation*}
	From the comparison between the norm and the modular, see Proposition \ref{proposition_1} \textnormal{(iii)} and \textnormal{(iv)}, we obtain
	\begin{align*}
		\norm{\abs{f}^{\frac{1}{r(\cdot)}}}_1 
		& \leq \left[ \frac{1}{r_-} + \frac{1}{r'_-} \right] 
		\max \left\lbrace \norm{fg}_1^{\frac{1}{r_{-}}} , \norm{fg}_1^{\frac{1}{r_{+}}}  \right\rbrace \max \left\lbrace  \norm{\abs{g}^\frac{-1}{r(\cdot)-1}}_1^{ \frac{r_+-1}{r_-} } , 
		\norm{\abs{g}^\frac{-1}{r(\cdot)-1}}_1^{ \frac{r_- -1}{r_+} } \right\rbrace.		
	\end{align*}
\end{proof}

Now we are in the position to present the main properties of the operator 
$A$ motivated by the work of Liu-Dai \cite{Liu-Dai-2018} for the constant 
exponent case.

\begin{theorem}\label{prop_properties_operator}$~$
	\begin{enumerate}
		\item[\textnormal{(i)}]
			Let hypothesis \eqref{condition_1} be satisfied. Then the operator $A\colon \Wpzero{\mathcal{H}}\to \Wpzero{\mathcal{H}}^*$ is continuous, bounded and strictly monotone.
		\item[\textnormal{(ii)}]
			Let hypothesis \textnormal{(H)} be satisfied. Then the operator $A\colon \Wpzero{\mathcal{H}}\to \Wpzero{\mathcal{H}}^*$ satisfies the $\Ss$-property, that is,
			\begin{align*}
				u_n \weak u \text{ in }\Wpzero{\mathcal{H}}\quad\text{and}\quad \limsup_{n\to\infty} \langle A(u_n),u_n-u\rangle \leq 0,
			\end{align*}
			imply $u_n\to u$ in $\Wpzero{\mathcal{H}}$;
		\item[\textnormal{(iii)}]
			Let hypothesis \textnormal{(H)} be satisfied. Then the operator $A\colon \Wpzero{\mathcal{H}}\to \Wpzero{\mathcal{H}}^*$ is coercive and a homeomorphism.
	\end{enumerate}
\end{theorem}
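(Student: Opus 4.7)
The plan is to prove the three parts in order, leveraging the modular machinery, the compact embedding $\Wpzero{\mathcal{H}}\hookrightarrow \Lp{\mathcal{H}}$, and the Radon--Riesz (Kadec--Klee) property for $\Wpzero{\mathcal{H}}$ established in Section \ref{section_2}.

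For part \textnormal{(i)}, continuity of $A$ follows at once from Proposition \ref{proposition-C1Energy} since $A=I'$ with $I\in C^1$. For boundedness I would fix $u$ and a test function $v$ with $\|v\|_{1,\mathcal{H},0}\le 1$, split the duality pairing into its $p(x)$- and $q(x)$-parts, apply the generalized H\"older inequality in variable exponent Lebesgue spaces (for the $q$-part, using the splitting $\mu(x)=\mu(x)^{1/q(x)}\mu(x)^{(q(x)-1)/q(x)}$ exactly as in the proof of Proposition \ref{proposition-C1Energy}), and translate norms to modulars via Proposition \ref{proposition_1} to obtain a bound depending only on $\|u\|_{1,\mathcal{H},0}$. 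Strict monotonicity is a routine consequence of the pointwise strict inequality $(|\xi|^{r-2}\xi-|\eta|^{r-2}\eta)\cdot(\xi-\eta)>0$ for $\xi\neq\eta$ and $r>1$, applied with $r=p(x)$ and with $r=q(x)$ (the latter weighted by $\mu\ge 0$) and integrated.

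For part \textnormal{(ii)}, the real work lies here. Since $u_n\weak u$, the pairing $\langle A(u),u_n-u\rangle\to 0$, and the assumption yields $\limsup_{n\to\infty}\langle A(u_n)-A(u),u_n-u\rangle\le 0$. By pointwise monotonicity the integrand defining this pairing is nonnegative, hence it converges to $0$ in $L^1(\Omega)$, and along a subsequence pointwise almost everywhere. Standard Simon-type inequalities applied with exponent $p(x)$ (and with $q(x)$ on $\{\mu>0\}$) then force $\nabla u_n\to\nabla u$ a.e.~in $\Omega$. Fatou's lemma gives $\liminf\rho_{\mathcal{H}}(\nabla u_n)\ge \rho_{\mathcal{H}}(\nabla u)$. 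For the reverse inequality I would use the identity
\begin{equation*}
\rho_{\mathcal{H}}(\nabla u_n)=\langle A(u_n),u_n-u\rangle+\into\bigl(|\nabla u_n|^{p(x)-2}\nabla u_n+\mu(x)|\nabla u_n|^{q(x)-2}\nabla u_n\bigr)\cdot\nabla u\diff x,
\end{equation*}
where the second integral converges to $\rho_{\mathcal{H}}(\nabla u)$ by Vitali's theorem (the boundedness of $\{\nabla u_n\}$ in $\Lp{\mathcal{H}}$ coming from weak convergence, together with a.e.~convergence, yields uniform integrability of the integrand; H\"older's inequality with the splitting of $\mu$ as in part (i) dominates the $q(x)$-contribution). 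Hence $\rho_{\mathcal{H}}(\nabla u_n)\to\rho_{\mathcal{H}}(\nabla u)$, and the Radon--Riesz property on $\Wpzero{\mathcal{H}}$ (proved in the unnumbered proposition after Proposition \ref{prop-generalpoincare}) forces $u_n\to u$ in norm. The subsequence principle upgrades subsequential to full convergence. This is the step I expect to be the main obstacle, because one has to juggle two variable exponents and a possibly degenerating weight simultaneously when verifying uniform integrability.

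For part \textnormal{(iii)}, coercivity is immediate from Proposition \ref{proposition_modular_properties}\textnormal{(iv)}: for $\|u\|_{1,\mathcal{H},0}>1$ one has $\langle A(u),u\rangle=\rho_{\mathcal{H}}(\nabla u)\ge \|u\|_{1,\mathcal{H},0}^{p_-}$, so $\langle A(u),u\rangle/\|u\|_{1,\mathcal{H},0}\to\infty$. Strict monotonicity from (i) gives injectivity. For surjectivity I would invoke Theorem \ref{theorem_pseudomonotone}, noting that boundedness from (i) and the $\Ss$-property from (ii) together imply pseudomonotonicity in the sense of Remark \ref{RemarkPseudomonotone} (a standard implication: on a bounded sequence with $\limsup\langle A(u_n),u_n-u\rangle\le 0$, the $\Ss$-property produces strong convergence, whence both $Au_n\weak Au$ and $\langle Au_n,u_n\rangle\to\langle Au,u\rangle$ follow from continuity). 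Hence $A$ is bijective and continuous. Finally, to show continuity of $A^{-1}$, I would take $v_n\to v$ in $\Wpzero{\mathcal{H}}^*$, set $u_n=A^{-1}(v_n)$, use coercivity to get $\{u_n\}$ bounded, pass to a weakly convergent subsequence $u_n\weak u'$, observe that $\langle A(u_n),u_n-u'\rangle=\langle v_n,u_n-u'\rangle\to 0$, apply the $\Ss$-property to conclude $u_n\to u'$, deduce $A(u')=v$ by continuity, and finish with the subsequence principle.
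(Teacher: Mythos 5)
Your proposal is correct, but in the crucial part (ii) it takes a genuinely different route from the paper. The paper proves, as an intermediate Claim, the strong convergence $\nabla u_n \to \nabla u$ in $L^{p(\cdot)}(\Omega)$: it splits $\Omega$ into $\{p\ge 2\}$ and $\{p<2\}$, uses Simon's inequalities quantitatively, and on $\{p<2\}$ needs the generalized reverse H\"older inequality (Lemma \ref{LemmaReverseHoelderVarExp}) to pass from the monotonicity quantity to $\int|\nabla u_n-\nabla u|^{p(x)}\diff x$; it then upgrades convergence in measure of the gradients to $\rho_{\mathcal H}(\nabla u_n-\nabla u)\to 0$ by Fatou, Young and two applications of Vitali, never invoking the Radon--Riesz property. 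You instead extract only a.e.\ convergence of the gradients from the pointwise monotonicity term (valid: at a.e.\ fixed $x$ its vanishing forces $\nabla u_n(x)\to\nabla u(x)$, boundedness of $|\nabla u_n(x)|$ coming from the growth of that term), prove the modular convergence $\rho_{\mathcal H}(\nabla u_n)\to\rho_{\mathcal H}(\nabla u)$ via the duality identity plus Vitali (your uniform integrability argument works: H\"older on measurable subsets with the splitting $\mu=\mu^{1/q}\mu^{(q-1)/q}$, boundedness of the conjugate factors and absolute continuity of the integrals of the fixed function $\nabla u$; note you implicitly need $\langle A(u_n),u_n-u\rangle\to 0$, which indeed follows since monotonicity gives $\liminf_{n\to\infty}\langle A(u_n),u_n-u\rangle\ge 0$), and then conclude with the Radon--Riesz property with respect to the modular on $\Wpzero{\mathcal{H}}$ from Section \ref{section_2} and the subsequence principle. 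This is legitimate and shorter, since it offloads the hard work to the Fan--Guan based proposition, whereas the paper's argument is self-contained modulo Lemma \ref{LemmaReverseHoelderVarExp} and yields as a by-product the strong $L^{p(\cdot)}$-convergence of the gradients. In (iii) you also deviate mildly: the paper gets bijectivity from the Minty--Browder theorem and then uses the \Ss-property for continuity of $A^{-1}$, while you obtain surjectivity from Theorem \ref{theorem_pseudomonotone} (boundedness, continuity and the \Ss-property do imply pseudomonotonicity) and prove continuity of the inverse by the same compactness argument; both are standard and correct. Part (i) and the coercivity estimate essentially match the paper, up to replacing Young's inequality by H\"older's inequality in the boundedness proof.
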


\begin{proof}
	\textnormal{(i)} By Proposition \ref{proposition-C1Energy}, $A=I'$ with $I$ of class $C^1$, so $A$ is continuous.
	
	From the well-known inequality
	\begin{equation*}%\label{IneqPositivity}
		\l(|\xi|^{r-2}\xi -|\eta|^{r-2}\eta\r) \cdot (\xi-\eta)>0 \quad\text{if 
} r >1, \text{ for all }\xi,\eta \in \R^N \text{ with }\xi\neq\eta,
	\end{equation*}
	we see that
	\begin{align*}
		\l\langle A(u) - A (v) , u - v \r\rangle
		& = \int_\Omega \left( |\nabla u|^{p(x)-2}\nabla u - |\nabla v|^{p(x)-2}\nabla v \r)  \cdot \l( \nabla u - \nabla v \r)\diff x\\
		& \quad + \int_\Omega \mu(x) \l( |\nabla u|^{q(x)-2}\nabla u - |\nabla v|^{q(x)-2}\nabla v \r)  \cdot	\l( \nabla u - \nabla v \r)\diff x>0
	\end{align*}
	whenever $u\neq v$ which proves the strict monotonicity of $A$. Let us now prove that $A$ is bounded. Taking $u,v \in \Wpzero{\mathcal{H}}\setminus \{0\}$, by applying Young's inequality, we obtain
	\begin{align*}
		&\min\l\{\frac{1}{\|u\|_{1,\mathcal{H}}^{q_+-1}},\frac{1}{\|u\|_{1,\mathcal{H}}^{p_--1}}\r\} \l \lan A(u),\frac{v}{\|v\|_{1,\mathcal{H}}} \r\ran\\
		& \leq \min\l\{\frac{1}{\|u\|_{1,\mathcal{H}}^{q_+-1}},\frac{1}{\|u\|_{1,\mathcal{H}}^{p_--1}}\r\}\into \l[|\nabla u|^{p(x)-1} \frac{|\nabla v|}{\|v\|_{1,\mathcal{H}}} \mathop{+} \mu(x) |\nabla u|^{q(x)-1} \frac{|\nabla v|}{\|v\|_{1,\mathcal{H}}}\r]\diff x\\
		& \leq \into \l[\l|\frac{\nabla u}{\|u\|_{1,\mathcal{H}}}\r|^{p(x)-1} \frac{|\nabla v|}{\|v\|_{1,\mathcal{H}}}+ \mu(x)^{\frac{q(x)-1}{q(x)}} \l|\frac{\nabla u}{\|u\|_{1,\mathcal{H}}}\r|^{q(x)-1} \mu(x)^{\frac{1}{q(x)}}\frac{|\nabla v|}{\|v\|_{1,\mathcal{H}}}\r]\diff x\\
		&\leq \frac{p_+-1}{p_-} \into \l|\frac{\nabla u}{\|u\|_{1,\mathcal{H}}}\r|^{p(x)}\diff x
		+ \frac{1}{p_-} \into\l|\frac{\nabla v}{\|v\|_{1,\mathcal{H}}}\r|^{p(x)}\diff x\\
		& \quad +\frac{q_+-1}{q_-} \into \mu(x) \l|\frac{\nabla u}{\|u\|_{1,\mathcal{H}}}\r|^{q(x)}\diff x
		+ \frac{1}{q_-} \into \mu(x)\l|\frac{\nabla v}{\|v\|_{1,\mathcal{H}}}\r|^{q(x)}\diff x\\
		& \leq \frac{q_+-1}{p_-} \rho_{\mathcal{H}} \l(\frac{\nabla  u}{\| u\|_{1,\mathcal{H}}}\r)+\frac{1}{p_-} \rho_{\mathcal{H}} \l(\frac{\nabla v}{\|v\|_{1,\mathcal{H}}}\r)\\
		&\leq   \frac{q_+-1}{p_-} \rho_{\mathcal{H}} \l(\frac{\nabla  u}{\| u\|_{1,\mathcal{H},0}}\r)+\frac{1}{p_-} \rho_{\mathcal{H}} \l(\frac{\nabla v}{\|v\|_{1,\mathcal{H},0}}\r)
		=\frac{q_+-1}{p_-}+\frac{1}{p_-} = \frac{q_+}{p_-}.
	\end{align*}
	This fact gives
	\begin{align*}
		\|A(u)\|_*
		&= \sup_{\substack{{v \in W^{1,\mathcal{H}}_0(\Omega)} \\ {v \neq 0}}} \dfrac{\left\langle A(u) , v \right\rangle }{\|v\|_{1,\mathcal{H}}}
		\leq \frac{q_+}{p_-}
		\max \l\{\|u\|_{1,\mathcal{H}}^{q_+-1},\|u\|_{1,\mathcal{H}}^{p_--1}\r\}.
	\end{align*}
	Hence, $A$ is bounded.
	
	\textnormal{(ii)} Let $\{u_n\}_{n \in \N} \subseteq W^{1,\mathcal{H}}_0(\Omega)$ be a sequence such that
	\begin{align}\label{assumption_splus}
		u_n \weak u \text{ in }W^{1,\mathcal{H}}_0(\Omega)\quad \text{and}\quad 
\limsup_{n \to \infty}{\left\langle A(u_n), u_n - u \right\rangle } \leq 0.
	\end{align}
	The weak convergence of $u_n$ to $u$ in $\Wpzero{\mathcal{H}}$ yields
	\begin{align*}
		\lim_{n \to \infty} \left\langle A(u) , u_n - u \right\rangle = 0.
	\end{align*}
	This fact along with \eqref{assumption_splus} gives
	\begin{align*}
		\limsup_{n \to \infty}{\left\langle A(u_n)-A(u), u_n - u \right\rangle } \leq 0.
	\end{align*}
	Then, the strict monotonicity of $A$ implies that
	\begin{align*}
		0
		\leq \liminf_{n \to \infty} \left\langle A(u_n) - A(u) , u_n - u \right\rangle
		\leq \limsup_{n \to \infty}\left\langle A(u_n) - A(u) , u_n - u \right\rangle
		\leq 0.
	\end{align*}
	Hence, we get
	\begin{equation}\label{proof_2}
		\lim_{n \to \infty} \left\langle A(u_n) - A(u) , u_n - u \right\rangle  
= 0=\lim_{n \to \infty} \left\langle A(u) , u_n - u \right\rangle.
	\end{equation}

	{\bf Claim: } $\nabla u_n \to \nabla u$ in $L^{p(\cdot)}(\Omega)$

	Splitting the integral in $\{p \geq 2\}:=\{x\in\Omega\,:\, p(x)\geq 2\}$ and $\{p < 2\}:=\{x\in\Omega\,:\, p(x)<2\}$ and noting that all four 
terms in \eqref{proof_2} are non-negative yields
	\begin{align}\label{EqCasePgeq2}
		\lim_{n \to \infty} \int_{ \{p \geq 2 \}} 
		\left( \abs{\nabla u_n}^{p(x)-2} \nabla u_n - \abs{\nabla u}^{p(x)-2} \nabla u \right) \cdot \left( \nabla u_n - \nabla u \right) \diff x = 0, 
\\
		\label{EqCaseP<2}
		\lim_{n \to \infty} \int_{ \{p < 2 \}} 
		\left( \abs{\nabla u_n}^{p(x)-2} \nabla u_n - \abs{\nabla u}^{p(x)-2} \nabla u \right) \cdot \left( \nabla u_n - \nabla u \right) \diff x = 0.
	\end{align}

	From Simon \cite[formula (2.2)]{Simon-1978} we have the well-known  inequalities 
	\begin{align}%\label{vector_inequalities}
		c_p |\xi-\eta|^p & \leq \l(|\xi|^{p-2}\xi -|\eta|^{p-2}\eta\r) \cdot (\xi-\eta) \quad\text{if } p \geq 2,\label{IneqPgeq2}\\
		C_p |\xi-\eta|^2 & \leq \l(|\xi|^{p-2}\xi -|\eta|^{p-2}\eta\r) \cdot (\xi-\eta) \l(|\xi|^p+|\eta|^p\r)^{\frac{2-p}{p}}\quad\text{if } 1 \leq p \leq 2,\label{IneqPBetween1and2}
	\end{align}
	for all $\xi,\eta$ where
	\begin{align}\label{not_optimal_constants}
		c_p= 5^{\frac{2-p}{2}} \quad \text{and}\quad C_p= (p-1) 2^{\frac{(p-1)(p-2)}{p}},
	\end{align}
	see also Lindqvist \cite[chapter 12]{Lindqvist-2019}. Note that the constants in \eqref{not_optimal_constants} are not optimal, but sufficient for our treatment.

	From \eqref{EqCasePgeq2} and by the inequality \eqref{IneqPgeq2} it follows
	\begin{equation*}
		\lim_{n \to \infty} \int_{ \{p \geq 2 \}} \abs{\nabla u_n - \nabla u}^{p(x)} \diff x = 0,
	\end{equation*}
	and in the following lines it will be proved that the same holds in $\{p 
< 2\}$, hence 
	\begin{equation*}
		\lim_{n \to \infty} \varrho_{p(\cdot)} (\nabla u_n - \nabla u)
		= \lim_{n \to \infty} \int_{\Omega}
		\abs{\nabla u_n - \nabla u}^{p(x)} \diff x = 0,
	\end{equation*}
	so the Claim follows by Proposition \ref{proposition_1}.
	
	Let $E_n = \{ \nabla u_n \neq 0\} \cup \{ \nabla u \neq 0\}$. By the absolute continuity of the Lebesgue integral, as the integrand is zero outside $E_n$, and by \eqref{IneqPBetween1and2} with $p_{+,k} = 2 - 1/k$ (note also $p-2 < 0$) it follows
	\begin{align*}
		& \int_{\{p<2\}} \left( \abs{\nabla u_n}^{p(x)-2} \nabla u_n - \abs{\nabla u}^{p(x)-2} \nabla u \right) \cdot \left( \nabla u_n - \nabla u \right) \diff x \\
		& = \lim_{k \to \infty} \int_{ E_n \cap \{ p \leq p_{+,k}\} } \left( \abs{\nabla u_n}^{p(x)-2} \nabla u_n - \abs{\nabla u}^{p(x)-2} \nabla u \right) \cdot \left( \nabla u_n - \nabla u \right) \diff x \\
		& \geq \limsup_{k \to \infty} \ (p_- -1) 2^{\frac{(p_{+,k} -1)(p_- -2)}{p_-}}\int_{E_n \cap \{p \leq p_{+,k} \}} \abs{\nabla u_n - \nabla u}^2 \left( \abs{\nabla u_n}^{p(x)} + \abs{\nabla u}^{p(x)} \right) ^{\frac{p(x)-2}{p(x)}} \diff x \\
		& \geq (p_- -1) 2^{\frac{(p_- -2)}{p_-}} \limsup_{k \to \infty}  \int_{E_n \cap \{p \leq 2 - 1/k \}} \abs{\nabla u_n - \nabla u}^2 \left( \abs{\nabla u_n}^{p(x)} + \abs{\nabla u}^{p(x)} \right) ^{\frac{p(x)-2}{p(x)}} \diff x. 
	\end{align*}
	By \eqref{EqCaseP<2}, for $n\geq n_0$ for some $n_0 \in \mathbb{N}$, the 
limit superior is strictly smaller than one, thus the same holds for the integrals $k$ large enough. Hence, when we apply Lemma \ref{LemmaReverseHoelderVarExp} to our integral with $r(\cdot)=\frac{2}{p(\cdot)}$ the maximum of the left-hand side of the lemma is attained at $\norm{fg}_1^{\frac{1}{r_+}}$. Note that (consider $p_{-,k} = p_-$)
	\begin{equation*}
		r_{\pm,k} = \frac{2}{p_{\mp,k}} \qquad \text{ and } \qquad  \frac{1 - 
r_{\pm,k} }{r_{\mp,k}} = \frac{p_{\pm,k} (p_{\mp,k} - 2)}{2 p_{\mp,k}}.
	\end{equation*}
	Applying this result, again because the integrands are zero outside $E_n$, as $\{u_n\}_{n \in \N}$ is bounded in the modular by some constant $M>1$ (due to its weak convergence and Proposition \ref{proposition_1} \textnormal{(iii)} and \textnormal{(iv)}) and as $(p_{\pm,k} -2) < 0$, for $n \geq n_0$, we have
	\begin{align*}
		& \int_{\{p<2\}} \left( \abs{\nabla u_n}^{p(x)-2} \nabla u_n - \abs{\nabla u}^{p(x)-2} \nabla u \right) \cdot \left( \nabla u_n - \nabla u \right) \diff x \\
		& \geq (p_- -1) 2^{\frac{(p_- -2)}{p_-}} \limsup_{k \to \infty} \frac{1}{2^{\frac{2}{p_-}}} 
		\quad \cdot \left( \int_{\{p < 2 - 1/k \}} \abs{\nabla u_n - \nabla u}^{p(x)} \diff x \right) ^{\frac{2}{p_-}} \\
		& \quad \times  \min \left\lbrace  \left(  \int_{\{p < 2 - 1/k\}} \left( \abs{\nabla u_n}^{p(x)} + \abs{\nabla u}^{p(x)} \right) \diff x \right) 
^{\frac{p_{+,k}(p_- -2)}{p_-^2}},\right.\\
		&\left.\qquad \qquad\quad
		\left(  \int_{\{ p < 2 - 1/k \}} \left( \abs{\nabla u_n}^{p(x)} + \abs{\nabla u}^{p(x)} \right) \diff x \right) ^{\frac{(p_{+,k} -2)}{p_{+,k}}} \right\rbrace \\
		& \geq (p_- -1) 2^{\frac{(p_- -4)}{p_-}} %\frac{(p_- -1)}{2^{\frac{2}{p_-}}} %(p_- -1) 2^{\frac{(p_- -2)}{p_-}} \left( 1 + \frac{ 2 - p_- }{p_-} \right) ^{\frac{-2}{p_-}} 
		\limsup_{k \to \infty} 
		\left( \int_{\{p < 2 - 1/k \}} \abs{\nabla u_n - \nabla u}^{p(x)} \diff 
x \right) ^{\frac{2}{p_-}} \\
		& \qquad \times \min \left\lbrace \left( M + \int_{\Omega} \abs{\nabla u}^{p(x)} \diff x  \right)^{\frac{p_{+,k}(p_- -2)}{p_-^2}} ,
		\left( M + \int_{\Omega} \abs{\nabla u}^{p(x)} \diff x \right)^{\frac{(p_{+,k} -2)}{p_{+,k}}} \right\rbrace \\
		& \geq K \limsup_{k \to \infty} 
		\left( \int_{\{p < 2 - 1/k \}} \abs{\nabla u_n - \nabla u}^{p(x)} \diff 
x \right) ^{\frac{2}{p_-}} \\
		& = K \left( \int_{\{p < 2 \}} \abs{\nabla u_n - \nabla u}^{p(x)} \diff x \right) ^{\frac{2}{p_-}},
	\end{align*}
	where
	\begin{align*}
		K = & (p_- -1) 2^{\frac{(p_- -4)}{p_-}} \min \left\lbrace \left( M + \int_{\Omega} \abs{\nabla u}^{p(x)} \diff x \right)^{\frac{2(p_- -2)}{p_-^2}} ,
		\left( M + \int_{\Omega} \abs{\nabla u}^{p(x)} \diff x \right)^{\frac{(p_- -2)}{p_-}}  \right\rbrace.
	\end{align*}
	By \eqref{EqCaseP<2} it follows that
	\begin{equation*}
		\lim_{n \to \infty} \int_{\{p < 2\}} \abs{\nabla u_n - \nabla u}^{p(x)} 
\diff x = 0.
	\end{equation*} 
	This proves the Claim.

	From the Claim  we know that $\{\nabla u_n\}_{n \in \N}$ converges in measure to $\nabla u$ in $\Omega$. Applying Young's inequality gives
	\begin{align*}
		\begin{split}
			& \int_\Omega \left(  \abs{\nabla u_n}^{p(x)-2}\nabla u_n 
			+ \mu(x) \abs{\nabla u_n}^{q(x)-2}\nabla u_n \right)  \cdot \left( \nabla u_n - \nabla u \right)  \diff x\\
			& = \int_\Omega |\nabla u_n|^{p(x)}\diff x- \int_\Omega |\nabla u_n|^{p(x)-2}\nabla u_n \cdot \nabla u\diff x\\
			& \quad + \int_\Omega \mu(x)|\nabla u_n|^{q(x)}\diff x- \int_\Omega \mu(x)|\nabla u_n|^{q(x)-2}\nabla u_n \cdot \nabla u\diff x\\
			& \geq \int_\Omega |\nabla u_n|^{p(x)}\diff x- \int_\Omega |\nabla u_n|^{p(x)-1} |\nabla u|\diff x\\
			& \quad + \int_\Omega \mu(x)|\nabla u_n|^{q(x)}\diff x- \int_\Omega \mu(x)|\nabla u_n|^{q(x)-1}|\nabla u|\diff x\\
			& \geq \int_\Omega |\nabla u_n|^{p(x)}\diff x- \int_\Omega \l( \frac{p(x)-1}{p(x)}|\nabla u_n|^{p(x)}+\frac{1}{p(x)} |\nabla u|^{p(x)}\r)\diff x\\
			& \quad + \int_\Omega \mu(x)|\nabla u_n|^{q(x)}\diff x- \int_\Omega \mu(x)\l(\frac{q(x)-1}{q(x)}|\nabla u_n|^{q(x)}+\frac{1}{q(x)}|\nabla u|^{q(x)}\r)\diff x\\
			& = \int_\Omega \frac{1}{p(x)}|\nabla u_n|^{p(x)}\diff x- \int_\Omega \frac{1}{p(x)} |\nabla u|^{p(x)}\diff x\\
			& \quad + \int_\Omega \frac{\mu(x)}{q(x)}|\nabla u_n|^{q(x)}\diff x- \int_\Omega \frac{\mu(x)}{q(x)}|\nabla u|^{q(x)}\diff x .
		\end{split}
	\end{align*}
	Hence by \eqref{proof_2}
	\begin{align}\label{proof_3}
		\limsup_{n \to \infty} \int_\Omega \l(\dfrac{\abs{\nabla u_n}^{p(x)}}{p(x)} + \mu(x) \dfrac{ \abs{\nabla u_n}^{q(x)}}{q(x)}\r) \diff x %\\
		\leq \int_\Omega \l(\dfrac{\abs{\nabla u}^{p(x)}}{p(x)} + \mu(x) \dfrac{ \abs{\nabla u}^{q(x)}}{q(x)}\r) \diff x.
	\end{align}
	From Fatou's Lemma, we obtain
	\begin{align}\label{proof_1}
		\liminf_{n \to \infty} \int_\Omega \l(\frac{|\nabla u_n|^{p(x)}}{p(x)} + \mu(x) \frac{|\nabla u_n|^{q(x)}}{q(x)}\r) \diff x
		\geq \int_\Omega \l(\frac{|\nabla u|^{p(x)}}{p(x)} + \mu(x) \frac{|\nabla u|^{q(x)}}{q(x)}\r) \diff x.
	\end{align}
	Combining \eqref{proof_1} and \eqref{proof_3} we conclude that
	\begin{align}\label{proof_4}
		\lim_{n \to \infty} \int_\Omega \l(\frac{|\nabla u_n|^{p(x)}}{p(x)} + \mu(x) \frac{|\nabla u_n|^{q(x)}}{q(x)}\r) \diff x
		= \int_\Omega \l(\frac{|\nabla u|^{p(x)}}{p(x)} + \mu(x) \frac{|\nabla u|^{q(x)}}{q(x)}\r) \diff x.
	\end{align}
	
	By the Claim we have that $\{\nabla u_n\}_{n \in \N}$ converges in measure to $\nabla u$, so by straightforward computations the functions on the 
left-hand side of \eqref{proof_4} converge in measure to those on the right-hand side. The converse of Vitali's theorem (see Bauer \cite[Lemma 21.6]{Bauer-2001}) yields the uniform integrability of the sequence of functions
	\begin{align*}
		\l\{\frac{|\nabla u_n|^{p(x)}}{p(x)}+ \mu(x) \frac{|\nabla u_n|^{q(x)}}{q(x)}\r\}_{n\in\N}.
	\end{align*}
	
	On the other side, we know that
	\begin{align*}
		\begin{split}
			&|\nabla u_n-\nabla u|^{p(x)}+\mu(x)|\nabla u_n-\nabla u|^{q(x)}\\
			& \leq 2^{q_+ -1} q_+ \l(\frac{|\nabla u_n|^{p(x)}}{p(x)} + \mu(x) \frac{|\nabla u_n|^{q(x)}}{q(x)} + \frac{|\nabla u|^{p(x)}}{p(x)} + \mu(x) \frac{|\nabla u|^{q(x)}}{q(x)} \r),
		\end{split}
	\end{align*}
	which implies that the sequence of functions
	\begin{align*}
		\l\{|\nabla u_n-\nabla u|^{p(x)}+\mu(x)|\nabla u_n-\nabla u|^{q(x)}\r\}_{n\in\N}
	\end{align*}
	is uniformly integrable. By straightforward computations and using the convergence in measure of $\left\lbrace \nabla u_n \right\rbrace _{n \in \mathbb{N}}$ to $\nabla u$, this sequence converges in measure to 0. Applying Vitali's theorem (see Bogachev \cite[Theorem 4.5.4]{Bogachev-2007}) it follows that
	\begin{align*}
		& \lim_{n \to \infty} \rho_{\mathcal{H}} ( \nabla u_n - \nabla u )= 
\lim_{n \to \infty} \into  \l(|\nabla u_n - \nabla u|^{p(x)} + \mu(x) |\nabla u_n - \nabla u|^{q(x)}\r) \diff x
	= 0,
	\end{align*}
	which is equivalent to $\norm{u_n - u}_{1,\mathcal{H},0} \to 0$, see Proposition \ref{proposition_modular_properties} \textnormal{(v)}. Hence, $u_n\to u$ in $\Wpzero{\mathcal{H}}$.
	
	\textnormal{(iii)} The operator $A$ is coercive, since by Proposition \ref{proposition_modular_properties} \textnormal{(i)}, one has
	\begin{align*}
		\frac{\left\langle A(u) , u \right\rangle}{\norm{u}_{1,\mathcal{H},0}} 
		& = \int_\Omega \l(\norm{u}^{p(x)-1}_{1,\mathcal{H},0} \left( \dfrac{\abs{\nabla u}}{\norm{u}_{1,\mathcal{H},0}} \right)^{p(x)} 
		+ \norm{u}^{q(x)-1}_{1,\mathcal{H},0} \mu(x) \left( \dfrac{\abs{\nabla u}}{\norm{u}_{1,\mathcal{H},0}} \right)^{q(x)}\r) \diff x \\
		& \geq \min\left\lbrace \norm{u}^{p_- - 1}_{1,\mathcal{H},0} , \norm{u}^{q_+ -1}_{1,\mathcal{H},0} \right\rbrace 
		\rho_\mathcal{H} \left( \dfrac{\nabla u}{\norm{\nabla u}_\mathcal{H}} \right) \to + \infty \text{ as } \norm{u}_{1,\mathcal{H},0} \to + \infty.
	\end{align*}
	This fact along with assertion \textnormal{(i)} of this theorem implies by the Minty-Browder theorem, see, for example, Zeidler \cite[Theorem 26.A]{Zeidler-1990}, that $A$ is invertible and that $A^{-1}$ is strictly monotone, demicontinuous and bounded. We only need to show that $A^{-1}$ is 
continuous.

	To this end, let $\{y_n\}_{n \in \N} \subseteq \Wpzero{\mathcal{H}}^*$ be a sequence such that $y_n \to y$ in $\Wpzero{\mathcal{H}}^*$ and let $u_n=A^{-1}(y_n)$ as well as $u=A^{-1}(y)$. By the strong convergence of $\{y_n\}_{n \in \N}$ and the boundedness of $A^{-1}$ we know that $u_n$ 
is bounded in $\Wpzero{\mathcal{H}}$. Hence, there exists a subsequence $\{u_{n_k}\}_{k\in\N}$ of $\{u_n\}_{n\in\N}$ such that
	\begin{align*}
		u_{n_k} \rightharpoonup u_0 \quad\text{in }\Wpzero{\mathcal{H}}.
	\end{align*}
	Using these facts we have
	\begin{align*}
		&\lim_{k\to \infty} \l\lan A(u_{n_k})-A(u_0),u_{n_k}-u_0\r\ran\\
		&= \lim_{k \to \infty} \l\lan y_{n_k}-y,u_{n_k}-u_0\r\ran+\lim_{k \to 
\infty} \l\lan y-A(u_0),u_{n_k}-u_0\r\ran=0.
	\end{align*}
	From assertion \textnormal{(ii)} of the theorem we know that $A$ fulfills the $\Ss$-property which implies that $u_{n_k}\to u_0$ in $\Wpzero{\mathcal{H}}$. By the continuity of the operator $A$ we easily see that
	\begin{align*}
		A(u_0)=\lim_{k \to \infty} A(u_{n_k})=\lim_{k \to \infty} y_{n_k}=y=A(u).
	\end{align*}
	Since $A$ is injective, it follows that $u=u_0$. By the subsequence principle it is easy to show that the whole sequence converges.
\end{proof}

We have similar results when the operator $A$ acts on $\Wp{\mathcal{H}} $.

\begin{proposition}$~$
	\begin{enumerate}
		\item[\textnormal{(i)}]
		Let hypothesis \eqref{condition_1} be satisfied. Then the functional $I\colon \Wp{\mathcal{H}} \to \R$ is well-defined and of class $C^1$ with $I'(u) = A(u)$.
		\item[\textnormal{(ii)}]
		Let hypothesis \eqref{condition_1} be satisfied. Then the operator $A\colon \Wp{\mathcal{H}}\to \Wp{\mathcal{H}}^*$ is continuous, bounded and strictly monotone.
		\item[\textnormal{(iii)}]
		Let hypothesis \textnormal{(H)} be satisfied. Then the operator $A\colon \Wp{\mathcal{H}}\to \Wp{\mathcal{H}}^*$ satisfies the $\Ss$-property, that is,
		\begin{align*}
			u_n \weak u \text{ in }\Wp{\mathcal{H}}\quad\text{and}\quad \limsup_{n\to\infty} \langle A(u_n),u_n-u\rangle \leq 0,
		\end{align*}
		imply $u_n\to u$ in $\Wp{\mathcal{H}}$.
	\end{enumerate}
\end{proposition}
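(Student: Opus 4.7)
The plan is to recycle the proofs of Proposition \ref{proposition-C1Energy} and Theorem \ref{prop_properties_operator}: since $A$ and $I$ involve only the gradient, most of the work carries over from $\Wpzero{\mathcal{H}}$ to $\Wp{\mathcal{H}}$ verbatim, and the only essentially new ingredient needed is the compact embedding $\Wp{\mathcal{H}} \hookrightarrow \Lp{\mathcal{H}}$ from Proposition \ref{prop-generalpoincare}(i), which is required to handle the $u$-part in the $\Ss$-property. For (i), I would repeat the Gateaux-derivative computation from the proof of Proposition \ref{proposition-C1Energy}: the dominating function used there only requires $|\nabla u|, |\nabla h| \in \Lp{p(\cdot)} \cap L^{q(\cdot)}_\mu(\Omega)$, which holds for $u, h \in \Wp{\mathcal{H}}$ by Proposition \ref{proposition_embeddings}(i),(vi), and the continuity of $I'$ uses the same H\"older plus Vitali argument, all applied to the gradients only.

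For (ii), continuity of $A$ is immediate from $A = I'$ together with (i). Strict monotonicity uses only the pointwise vector inequality for $\xi \mapsto |\xi|^{r-2}\xi$, which is independent of the ambient norm. The boundedness estimate of Theorem \ref{prop_properties_operator}(i) applies almost word for word with $\|\cdot\|_{1,\mathcal{H},0}$ replaced by $\|\cdot\|_{1,\mathcal{H}}$: the key inequality
\begin{equation*}
\rho_{\mathcal{H}}\left(\frac{\nabla v}{\|v\|_{1,\mathcal{H}}}\right) \leq \rho_{\mathcal{H}}\left(\frac{\nabla v}{\|\nabla v\|_{\mathcal{H}}}\right) = 1
\end{equation*}
holds because $\|\nabla v\|_{\mathcal{H}} \leq \|v\|_{1,\mathcal{H}}$ together with the monotonicity of $\rho_{\mathcal{H}}$. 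The final estimate is $\|A(u)\|_* \leq (q_+/p_-) \max\{\|u\|_{1,\mathcal{H}}^{q_+-1}, \|u\|_{1,\mathcal{H}}^{p_--1}\}$, so $A$ is bounded.

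The substantive part is (iii). Starting from $u_n \weak u$ in $\Wp{\mathcal{H}}$ with $\limsup_n \langle A(u_n), u_n - u\rangle \leq 0$, I would rerun the argument of Theorem \ref{prop_properties_operator}(ii) on the gradient: strict monotonicity together with weak convergence gives $\lim_n \langle A(u_n) - A(u), u_n - u\rangle = 0$; splitting the integral into $\{p \geq 2\}$ and $\{p < 2\}$ and combining the Simon inequalities \eqref{IneqPgeq2}--\eqref{IneqPBetween1and2} with the variable-exponent reverse H\"older Lemma \ref{LemmaReverseHoelderVarExp} yields $\nabla u_n \to \nabla u$ in $\Lp{p(\cdot)}$, hence in measure. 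The Young-inequality plus Fatou step then forces convergence of the energy integrals $\into \left( |\nabla u_n|^{p(x)}/p(x) + \mu(x) |\nabla u_n|^{q(x)}/q(x) \right) \diff x$ to the corresponding integral at $\nabla u$, and a double application of Vitali's theorem promotes this to $\rho_{\mathcal{H}}(\nabla u_n - \nabla u) \to 0$, i.e.\ $\nabla u_n \to \nabla u$ in $\Lp{\mathcal{H}}$.

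To lift this to convergence in $\Wp{\mathcal{H}}$ (rather than merely $\Wpzero{\mathcal{H}}$, where the gradient norm is already equivalent to the full norm), I invoke Proposition \ref{prop-generalpoincare}(i): under hypothesis \textnormal{(H)} the embedding $\Wp{\mathcal{H}} \hookrightarrow \Lp{\mathcal{H}}$ is compact, so the weak convergence $u_n \weak u$ in $\Wp{\mathcal{H}}$ already gives $u_n \to u$ in $\Lp{\mathcal{H}}$; combining with the strong gradient convergence yields $\|u_n - u\|_{1,\mathcal{H}} \to 0$. The main technical obstacle is the one faced already in the $\Wpzero{\mathcal{H}}$ case, namely the $\{p < 2\}$ region where the Simon inequality is singular and the variable-exponent reverse H\"older Lemma \ref{LemmaReverseHoelderVarExp} must be deployed; passing to $\Wp{\mathcal{H}}$ introduces no additional analytic difficulty since the compact embedding absorbs the lower-order term.
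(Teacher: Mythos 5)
Your proposal is correct and matches the paper's own (very brief) proof: assertions (i) and (ii) are obtained by repeating the arguments of Proposition \ref{proposition-C1Energy} and Theorem \ref{prop_properties_operator}, which only involve the gradient terms, and (iii) is obtained by running the $\Ss$-argument on the gradients and then absorbing the lower-order term via the compact embedding $\Wp{\mathcal{H}}\hookrightarrow \Lp{\mathcal{H}}$ of Proposition \ref{prop-generalpoincare}\,\textnormal{(i)}. Your additional remarks (replacing $\|\cdot\|_{1,\mathcal{H},0}$ by $\|\nabla v\|_{\mathcal{H}}$ in the boundedness estimate, and using that a compact embedding turns weak convergence into strong $\Lp{\mathcal{H}}$-convergence) are exactly the details the paper leaves implicit.
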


\begin{proof}
	The assertions \textnormal{(i)} and \textnormal{(ii)} follow in the same 
way as in the proof of Theorem \ref{prop_properties_operator}. For \textnormal{(iii)} we make use of the compact embedding $\Wp{\mathcal{H}}\hookrightarrow \Lp{\mathcal{H}}$, see Proposition \ref{prop-generalpoincare} \textnormal{(i)}.
\end{proof}

In the following let $X=\Wpzero{\mathcal{H}}$ or $X=\Wp{\mathcal{H}}$ 
and let $B\colon X\to X^*$ be given by
\begin{align*}
	\l\lan B(u),v \r\ran_{X}
	&=\into \l( |\nabla u|^{p(x)-2}\nabla u \cdot \nabla v +\mu(x) |\nabla 
u|^{q(x)-2}\nabla u \cdot \nabla v\r)\diff x\\
	&\quad +\into \l( |u|^{p(x)-2} u  v +\mu(x) | u|^{q(x)-2} u   v\r)\diff x,
\end{align*}
for all $u,v \in X$, where $\lan\cdot\,,\cdot\ran_{X}$ denotes the duality pairing between $X$ and its dual space $X^*$. Moreover, let $J\colon  X\to\R$ be given  by
\begin{align*}%\label{energy_functional}
	J(u)=\into \l( \frac{|\nabla u|^{p(x)}}{p(x)} 
	+ \mu(x) \frac{|\nabla u|^{q(x)}}{q(x)}\r)\diff x
	+\into \l( \frac{| u|^{p(x)}}{p(x)} 
	+ \mu(x) \frac{| u|^{q(x)}}{q(x)}\r)\diff x
\end{align*}
for all $u \in X$.

Under the weaker assumptions in \eqref{condition_1} we have the following.

\begin{proposition}
	Let hypothesis \eqref{condition_1} be satisfied.
	\begin{enumerate}
		\item[\textnormal{(i)}]
		The functional $J\colon X \to \R$ is well-defined and of class $C^1$ with $J'(u) = B(u)$.
		\item[\textnormal{(ii)}]
		The operator $B\colon X\to X^*$ is continuous, bounded and strictly monotone.
		\item[\textnormal{(iii)}]
		The operator $B\colon X\to X^*$ satisfies the $\Ss$-property, that is,
		\begin{align*}
			u_n \weak u \text{ in }X\quad\text{and}\quad \limsup_{n\to\infty} \langle B(u_n),u_n-u\rangle \leq 0,
		\end{align*}
		imply $u_n\to u$ in $X$.
		\item[\textnormal{(iv)}]
		The operator $B\colon X\to X^*$ is coercive and a homeomorphism.
	\end{enumerate}
\end{proposition}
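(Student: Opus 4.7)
The plan is to mirror, essentially verbatim, the arguments already established for $A$ in Proposition \ref{proposition-C1Energy} and Theorem \ref{prop_properties_operator}, with the systematic insertion of analogous non-gradient terms in $u$ alongside the gradient terms in $\nabla u$. The relevant modular is now $\hat{\rho}_{\mathcal{H}}$ from \eqref{modular2}, and the norm-modular comparisons in Proposition \ref{proposition_modular_properties2} play the same role that Proposition \ref{proposition_modular_properties} did before.

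For \textnormal{(i)}, well-definedness is immediate from $0 \leq J(u) \leq \hat{\rho}_{\mathcal{H}}(u)/p_-$. The Gateaux derivative is obtained by applying the mean value theorem and dominated convergence to each of the four integrals, with dominating functions $\l(|\nabla u|^{p(x)-1} + t_0|\nabla h|^{p(x)-1}\r)|\nabla h|$ (plus its $q(x)$-counterpart weighted by $\mu$) and their analogues with $|u|, |h|$ in place of $|\nabla u|, |\nabla h|$, each integrable by H\"older's inequality and Proposition \ref{proposition_1}(iii)--(iv) exactly as in the proof of Proposition \ref{proposition-C1Energy}; continuity of $J'$ then follows by the same Vitali argument. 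For \textnormal{(ii)}, continuity follows from \textnormal{(i)}; strict monotonicity holds because $\langle B(u) - B(v), u-v \rangle$ is a sum of four non-negative integrals of the form $(|\xi|^{r-2}\xi - |\eta|^{r-2}\eta) \cdot (\xi - \eta)$, positive as soon as $u \neq v$; and boundedness follows verbatim from the Young-inequality computation in Theorem \ref{prop_properties_operator}(i), yielding $\|B(u)\|_* \leq C \max\l\{\|u\|^{q_+-1}_{\hat{\rho}_{\mathcal{H}}}, \|u\|^{p_--1}_{\hat{\rho}_{\mathcal{H}}}\r\}$ for a suitable constant $C>0$.

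Part \textnormal{(iii)} is the substantive step. Given $u_n \weak u$ in $X$ and $\limsup \langle B(u_n), u_n-u\rangle \leq 0$, the strict monotonicity of \textnormal{(ii)} upgrades this to $\lim_{n\to\infty} \langle B(u_n) - B(u), u_n-u \rangle = 0$. The integrand splits into four non-negative pieces (gradient $p$- and $q$-terms and their non-gradient counterparts), so each of the four integrals vanishes in the limit. The Simon-inequality + reverse-H\"older argument of Theorem \ref{prop_properties_operator}(ii), based on Lemma \ref{LemmaReverseHoelderVarExp}, can then be run in parallel on the gradient and on the non-gradient pieces to give $\nabla u_n \to \nabla u$ and $u_n \to u$ in $L^{p(\cdot)}(\Omega)$, hence convergence in measure of both sequences. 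Young's inequality bounding $\langle B(u_n), u_n - u\rangle$ from below combined with Fatou's lemma then yields $\hat{\rho}_{\mathcal{H}}(u_n) \to \hat{\rho}_{\mathcal{H}}(u)$; the converse of Vitali's theorem provides the uniform integrability needed, and since $|a-b|^{r(x)} \leq 2^{q_+-1}(|a|^{r(x)} + |b|^{r(x)})$, a further application of Vitali's theorem produces $\hat{\rho}_{\mathcal{H}}(u_n - u) \to 0$, i.e., $u_n \to u$ in $X$ by Proposition \ref{proposition_modular_properties2}(v). For \textnormal{(iv)}, coercivity is direct from Proposition \ref{proposition_modular_properties2}(iii)--(iv), and invertibility plus continuity of $B^{-1}$ follow from Minty-Browder combined with the $\Ss$-property of \textnormal{(iii)}, exactly as in the last paragraph of the proof of Theorem \ref{prop_properties_operator}(iii).

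The main obstacle is running the Simon + reverse-H\"older machinery in parallel on the non-gradient integrals under hypothesis \eqref{condition_1} alone. Crucially, no compact embedding is invoked at any point of this proof: weak convergence in $X$ alone bounds both $\{u_n\}$ and $\{\nabla u_n\}$ in the modular $\rho_{\mathcal{H}}$, which is the sole prerequisite for Lemma \ref{LemmaReverseHoelderVarExp} to apply, and the final Vitali step uses only convergence in measure plus uniform integrability. This is precisely why the $\Ss$-property can be obtained for $B$ under the weaker assumption \eqref{condition_1}, in contrast to the analogous result for $A$ which required \textnormal{(H)}.
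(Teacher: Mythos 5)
Your proposal is correct and follows exactly the route the paper intends: this proposition is stated without proof precisely because it is the term-by-term adaptation of Proposition \ref{proposition-C1Energy} and Theorem \ref{prop_properties_operator}, with the lower-order terms in $u$ treated in parallel with the gradient terms and Proposition \ref{proposition_modular_properties2} replacing Proposition \ref{proposition_modular_properties}. You also correctly identify the key structural point: since $B$ itself controls the $|u|^{p(x)-2}u+\mu(x)|u|^{q(x)-2}u$ terms, neither the Poincar\'e inequality nor the compact embedding $\Wp{\mathcal{H}}\hookrightarrow\Lp{\mathcal{H}}$ is needed, which is exactly why hypothesis \eqref{condition_1} suffices here for the $\Ss$-property and the homeomorphism property, in contrast to the results for $A$.
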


%*********************************************************************
%*********************************************************************
\section{Existence and uniqueness results}\label{section_4}
%*********************************************************************
%*********************************************************************

In this section we prove our main existence and uniqueness results. Recall that the problem under consideration is the following one
\begin{equation}\label{problem}
	\begin{aligned}
		-\divergenz\left(|\nabla u|^{p(x)-2}\nabla u+\mu(x) |\nabla u|^{q(x)-2}\nabla u\right) & =f(x,u,\nabla u)\quad && \text{in } \Omega,\\
		u & = 0 &&\text{on } \partial \Omega.
	\end{aligned}
\end{equation}

We suppose the following assumptions on the nonlinearity $f$.

\begin{enumerate}[leftmargin=1cm]
	\item[\textnormal{H(f)}]
		Let $f\colon \Omega \times \R \times \R^N \to \R$ be a Carath\'{e}odory 
function such that $f(\cdot,0,0)\neq 0$ and the following hold:
		\begin{enumerate}[leftmargin=0.5cm]
			\item[\textnormal{(i)}]
				There exists $\alpha \in \Lp{\frac{r(\cdot)}{r(\cdot)-1}}$ and $a_1, a_2\geq 0$ such that
				\begin{align*}%\label{growth_f}
					|f(x,s,\xi)| & \leq a_1 |\xi|^{p(x) \frac{r(x)-1}{r(x)}}+a_2|s|^{r(x)-1}+\alpha(x)
				\end{align*}
				for a.\,a.\,$x\in\Omega$, for all $s\in \R$ and for all $\xi\in\R^N$, 
where $r \in C_+(\close)$ is such that $r(x)<p^*(x)$ for all $x\in \close$ with the critical exponent $p^*$ given in \eqref{critical_exponent}.
			\item[\textnormal{(ii)}]
				There exists $\omega \in \Lp{1}$ and $b_1,b_2\geq 0$ such that
				\begin{align}\label{estimate_f}
					f(x,s,\xi)s & \leq b_1|\xi|^{p(x)}+b_2|s|^{p_-}+\omega(x)
				\end{align}
				for a.\,a.\,$x\in\Omega$, for all $s\in \R$ and for all $\xi\in\R^N$. 
Moreover,
				\begin{align}\label{condition_coeffizients}
					1-b_1-b_2\lambda_{1,p_-}^{-1}>0
				\end{align}
				where $\lambda_{1,p_-}$ is the first eigenvalue of the Dirichlet eigenvalue problem for the $p_-$-Laplacian, see \eqref{eigenvalue_problem}.
		\end{enumerate}
\end{enumerate}

\begin{example}
	The following function satisfies hypotheses \textnormal{H(f)}:
	\begin{align*}
		f(x,s,\xi)=-d_1|s|^{r(x)-2}s+d_2|\xi|^{(p_- -1) \left( \frac{r(x) - 1}{r(x)} \right) } + d_3 \gamma(x), %\quad\text{for all } s\in\R \text{ and all }\xi \in\R^N,
	\end{align*}
	with $l, r \in C_+(\close)$, $r(x)<p^*(x)$ and $l(x) \leq \min \{ p_- , r(x) \}$ for all $x \in \close$, $0 \neq \gamma \in \Lp{\frac{l(\cdot)}{l(\cdot)-1}}$, $d_1 \geq 0$ and
	\begin{align*}
		& 0 < |d_3| < p_- \lambda_{1,p_-} 
		\quad\text{as well as}\quad
	 	0 \leq |d_2| < \frac{p_- - |d_3| \lambda_{1,p_-}^{-1} }{p_- -1+\lambda_{1,p_-}^{-1}}.
	\end{align*}
\end{example}

\begin{definition}
	We say that $u\in \Wpzero{\mathcal{H}}$ is a weak solution of problem \eqref{problem} if for all test functions $\ph \in \Wpzero{\mathcal{H}}$ it 
satisfies
	\begin{align}\label{weak_solution}
		\into \left(|\nabla u|^{p(x)-2}\nabla u+\mu(x)|\nabla u|^{q(x)-2}\nabla 
u \right)\cdot\nabla\ph \diff x=\into f(x,u,\nabla u)\ph \diff x.
	\end{align}
\end{definition}
Because of Proposition \ref{proposition_embeddings} and hypothesis \textnormal{H(f)(i)} a weak solution in \eqref{weak_solution} is well-defined.

The following proposition is an immediate consequence of Theorem \ref{prop_properties_operator}.

\begin{proposition}
	Let hypothesis \textnormal{(H)} be satisfied and let
	\begin{equation*}
		f(x,s,\xi) = \alpha (x) \text{ for all } (x,s,\xi) \in \Omega \times \R \times \RN,
	\end{equation*} 
	where $r$ and $\alpha$ are as in \textnormal{H(f)(i)}. Then \eqref{problem} has a unique weak solution.
\end{proposition}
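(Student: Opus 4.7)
The plan is to reduce the problem to solving the operator equation $A(u) = T_\alpha$ in $\Wpzero{\mathcal{H}}^*$, where $T_\alpha$ denotes the linear functional $\varphi \mapsto \int_\Omega \alpha\varphi\diff x$, and then invoke Theorem \ref{prop_properties_operator}(iii), which already asserts that $A$ is a homeomorphism between $\Wpzero{\mathcal{H}}$ and its dual. Once both the right-hand side is shown to belong to the dual space and the operator identification is in place, existence and uniqueness are immediate.

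\textbf{Step 1 (the right-hand side defines a dual element).} First I would verify that $T_\alpha \in \Wpzero{\mathcal{H}}^*$. Since hypothesis \textnormal{(H)} and \textnormal{H(f)(i)} provide $r \in C_+(\close)$ with $r(x) < p^*(x)$ for all $x \in \close$, Proposition \ref{proposition_embeddings}(iii) yields a continuous embedding $\Wpzero{\mathcal{H}} \hookrightarrow \Lp{r(\cdot)}$ with some constant $C_e > 0$. Combining H\"older's inequality in variable exponent Lebesgue spaces with $\alpha \in \Lp{r'(\cdot)}$ gives, for any $\varphi \in \Wpzero{\mathcal{H}}$,
\begin{align*}
    |\langle T_\alpha,\varphi\rangle| = \left| \into \alpha(x) \varphi \diff x \right|
    \leq 2\, \|\alpha\|_{r'(\cdot)} \|\varphi\|_{r(\cdot)}
    \leq 2 C_e\, \|\alpha\|_{r'(\cdot)} \|\varphi\|_{1,\mathcal{H},0},
\end{align*}
so $T_\alpha$ is a bounded linear functional on $\Wpzero{\mathcal{H}}$.

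\textbf{Step 2 (existence and uniqueness via the homeomorphism property).} Observe that $u \in \Wpzero{\mathcal{H}}$ is a weak solution in the sense of \eqref{weak_solution} if and only if $A(u) = T_\alpha$ in $\Wpzero{\mathcal{H}}^*$. By Theorem \ref{prop_properties_operator}(iii), the operator $A$ is a homeomorphism from $\Wpzero{\mathcal{H}}$ onto $\Wpzero{\mathcal{H}}^*$ (in particular a bijection), hence there exists a unique $u := A^{-1}(T_\alpha) \in \Wpzero{\mathcal{H}}$ satisfying the equation, which proves the proposition.

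\textbf{Remark on the main obstacle.} There is essentially no hard work left: all deep ingredients — continuity, boundedness, strict monotonicity, the $\Ss$-property, coercivity, and ultimately the invertibility of $A$ — have already been established in Theorem \ref{prop_properties_operator}, and the embedding machinery for the right-hand side has been set up in Proposition \ref{proposition_embeddings}. The only minor point worth noting is that uniqueness can alternatively (and more elementarily) be obtained directly from Theorem \ref{prop_properties_operator}(i): if $u_1, u_2$ were two weak solutions, subtracting the weak formulations and testing with $u_1 - u_2$ gives $\langle A(u_1) - A(u_2), u_1 - u_2\rangle = 0$, and strict monotonicity forces $u_1 = u_2$.
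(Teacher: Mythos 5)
Your proposal is correct and takes essentially the same route as the paper: one identifies the right-hand side as an element of $\Wpzero{\mathcal{H}}^*$ through the embedding $\Wpzero{\mathcal{H}}\hookrightarrow \Lp{r(\cdot)}$ together with H\"older's inequality, and then applies the bijectivity (homeomorphism property) of $A$ from Theorem \ref{prop_properties_operator} to get existence and uniqueness of $u=A^{-1}(T_\alpha)$. Your closing remark that uniqueness also follows directly from strict monotonicity is a correct, though redundant, observation.
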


\begin{proof}
	By assumption and Proposition \ref{proposition_embeddings}, $\Wpzero{\mathcal{H}} \hookrightarrow \Lp{r(\cdot)}$, hence $\alpha \in \Lp{r'(\cdot)} = \Lp{r(\cdot)} ^* \hookrightarrow \Wpzero{\mathcal{H}}^*$ and by Proposition \ref{prop_properties_operator} $A$ is bijective.
\end{proof}

Our main existence result reads as follows.

\begin{theorem}\label{theorem_existence}
	Let hypotheses \textnormal{(H)} and \textnormal{H(f)} be satisfied. Then 
problem \eqref{problem} admits at least one nontrivial weak solution $u \in \Wpzero{\mathcal{H}}$.
\end{theorem}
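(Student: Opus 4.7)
The plan is to rewrite \eqref{problem} as an operator equation on $\Wpzero{\mathcal{H}}$ and apply the surjectivity result, Theorem \ref{theorem_pseudomonotone}. Concretely, I would introduce the Nemytskij-type operator $N_f\colon\Wpzero{\mathcal{H}}\to \Wpzero{\mathcal{H}}^*$ defined by
\begin{equation*}
	\langle N_f(u),\ph\rangle_{\mathcal{H}}=\into f(x,u,\nabla u)\ph\diff x,
\end{equation*}
and set $T:=A-N_f$. Weak solutions of \eqref{problem} in the sense of \eqref{weak_solution} are exactly the zeros of $T$, so it suffices to show that $T$ is bounded, pseudomonotone and coercive. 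Boundedness of $A$ is Theorem \ref{prop_properties_operator}(i); for $N_f$, the growth bound \textnormal{H(f)(i)} combined with $|\xi|^{p(x)\frac{r(x)-1}{r(x)}\cdot r'(x)}=|\xi|^{p(x)}$ and $|s|^{(r(x)-1)r'(x)}=|s|^{r(x)}$ shows that $f(\cdot,u,\nabla u)\in L^{r'(\cdot)}(\Omega)$ whenever $u\in \Wpzero{\mathcal{H}}$, with norm bounded in terms of $\rho_{p(\cdot)}(\nabla u)$, $\rho_{r(\cdot)}(u)$ and $\|\alpha\|_{r'(\cdot)}$; via the continuous embedding $\Wpzero{\mathcal{H}}\hookrightarrow L^{r(\cdot)}(\Omega)$ (Proposition \ref{proposition_embeddings}(ii), applicable under \textnormal{(H)} since $r(x)<p^*(x)$) this yields boundedness of $N_f$.

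Next I would verify pseudomonotonicity of $T$ in the equivalent form given by Remark \ref{RemarkPseudomonotone}. Take $u_n\weak u$ in $\Wpzero{\mathcal{H}}$ with $\limsup_{n\to\infty}\langle T(u_n),u_n-u\rangle_\mathcal{H}\le 0$. By Proposition \ref{proposition_embeddings}(iii) the embedding into $L^{r(\cdot)}(\Omega)$ is compact, so $u_n\to u$ in $L^{r(\cdot)}(\Omega)$, while $\{f(\cdot,u_n,\nabla u_n)\}$ is bounded in $L^{r'(\cdot)}(\Omega)$ by the preceding paragraph. Hölder's inequality then gives
\begin{equation*}
	\Big|\into f(x,u_n,\nabla u_n)(u_n-u)\diff x\Big|\le 2\|f(\cdot,u_n,\nabla u_n)\|_{r'(\cdot)}\|u_n-u\|_{r(\cdot)}\longrightarrow 0,
\end{equation*}
so $\limsup_{n\to\infty}\langle A(u_n),u_n-u\rangle_\mathcal{H}\le 0$. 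The $\Ss$-property of $A$ (Theorem \ref{prop_properties_operator}(ii)) then forces $u_n\to u$ strongly in $\Wpzero{\mathcal{H}}$; by continuity of $A$ and the Nemytskij operator (using dominated convergence together with the growth bound) $T(u_n)\to T(u)$ in $\Wpzero{\mathcal{H}}^*$ and $\langle T(u_n),u_n\rangle_\mathcal{H}\to \langle T(u),u\rangle_\mathcal{H}$.

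For coercivity I would use \textnormal{H(f)(ii)} together with the variational characterisation \eqref{lambda_one} of $\lambda_{1,p_-}$. Since $\Wpzero{\mathcal{H}}\hookrightarrow \Wpzero{p_-}$, Poincaré yields $\int_\Omega |u|^{p_-}\diff x\le \lambda_{1,p_-}^{-1}\int_\Omega |\nabla u|^{p_-}\diff x$, and splitting $\{|\nabla u|\le 1\}$, $\{|\nabla u|>1\}$ gives $\int_\Omega|\nabla u|^{p_-}\diff x\le |\Omega|+\int_\Omega|\nabla u|^{p(x)}\diff x$. Combining these with \eqref{estimate_f} and using $\min\{1-b_1-b_2\lambda_{1,p_-}^{-1},1\}=:c_0>0$ from \eqref{condition_coeffizients},
\begin{equation*}
	\langle T(u),u\rangle_\mathcal{H}\ge c_0\,\rho_\mathcal{H}(\nabla u)-C,
\end{equation*}
for a constant $C$ independent of $u$. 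For $\|u\|_{1,\mathcal{H},0}\ge 1$, Proposition \ref{proposition_modular_properties}(iv) gives $\rho_\mathcal{H}(\nabla u)\ge \|u\|_{1,\mathcal{H},0}^{p_-}$, which proves coercivity since $p_->1$. Theorem \ref{theorem_pseudomonotone} therefore yields some $u\in\Wpzero{\mathcal{H}}$ with $T(u)=0$, that is, a weak solution of \eqref{problem}; nontriviality is immediate from the assumption $f(\cdot,0,0)\neq 0$ in \textnormal{H(f)}. The most delicate step is the pseudomonotonicity argument: the subtlety lies in extracting $L^{r'(\cdot)}$-boundedness of the convection Nemytskij term from the modular bound $\rho_{p(\cdot)}(\nabla u_n)\lesssim 1$ in the variable-exponent framework, which is precisely where the assumption $r(x)<p^*(x)$ and the compact embedding enter decisively.
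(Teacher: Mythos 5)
Your proposal is correct and follows essentially the same route as the paper: Nemytskij operator built on the growth condition \textnormal{H(f)(i)} and the compact embedding $\Wpzero{\mathcal{H}}\hookrightarrow \Lp{r(\cdot)}$, pseudomonotonicity of $A-N_f$ via the $\Ss$-property of $A$, coercivity from \eqref{estimate_f}, \eqref{lambda_one} and the splitting $\int_\Omega|\nabla u|^{p_-}\diff x\le |\Omega|+\rho_{\mathcal{H}}(\nabla u)$, and then the surjectivity theorem plus $f(\cdot,0,0)\neq 0$ for nontriviality. The only nitpick is your citation of Proposition \ref{proposition_embeddings}\,\textnormal{(ii)} for the continuous embedding, which formally needs log-H\"older continuity of $p$ not assumed in \textnormal{(H)}; since $r(x)<p^*(x)$ strictly, part \textnormal{(iii)} (which you also use) already gives the compact, hence continuous, embedding, exactly as in the paper.
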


\begin{proof}
	First, we introduce the Nemytskij operator related to $f$, that is, $N_f 
:=i^*\circ \hat N_f$, where $\hat N_f\colon \Wpzero{\mathcal{H}} \to \Lp{r'(\cdot)}$ is given by
	\begin{equation*}
		\hat{N}_f (u) = f(x,u,\nabla u),
	\end{equation*}
	and $i^*\colon\Lp{r'(\cdot)}\to \Wpzero{\mathcal{H}}^*$ is the adjoint operator of the embedding $i\colon\Wpzero{\mathcal{H}}\to \Lp{r(\cdot)}$. It is clear that $\hat{N}_f$ is well-defined, bounded and continuous by \textnormal{H(f)(i)} and Proposition \ref{proposition_embeddings} (for the 
continuity use Vitali's Theorem as in Proposition \ref{proposition-C1Energy}).

	 For $u \in \Wpzero{\mathcal{H}}$ we define $\mathcal{A}: \Wpzero{\mathcal{H}} \to \Wpzero{\mathcal{H}}^* $ by
	\begin{align*}
		\mathcal{A}(u)=A(u)-N_f(u),
	\end{align*}
	which consequently is continuous and bounded by Theorem \ref{prop_properties_operator}.
	 
	In order to apply Theorem \ref{theorem_pseudomonotone} we first show that $\mathcal{A}$ is pseudomonotone in the sense of Remark \ref{RemarkPseudomonotone}. 
   	To this end, let $\{u_n\}_{n\geq 1}\subseteq \Wpzero{\mathcal{H}}$ be 
a sequence such that
	\begin{align}\label{assumption_pseudomonotone}
		u_n \weak u\quad \text{in }\Wpzero{\mathcal{H}}\quad\text{and} \quad \limsup_{n\to \infty} \langle \mathcal{A}(u_n),u_n-u\rangle_{\mathcal{H}} \leq 0.
	\end{align}
	The compact embedding from Proposition \ref{embedding_critical} implies that
	\begin{align}\label{a1}
		u_n \to u\quad \text{in }\Lp{r(\cdot)}
	\end{align}
	since $r(x)<p^*(x)$ for all $x \in \close$. From H\"older's inequality, the weak convergence of $\{u_n\}_{n\in\N}$ in $\Wpzero{\mathcal{H}}$ (hence it is bounded in its norm) and the boundedness of $\hat{N}_f$ it follows that
	\begin{align*}
		\l|\into f\l(x,u_n,\nabla u_n\r)(u_n-u)\diff x \r|
		& \leq 2 \norm{\hat{N}_f(u_n)}_{\frac{r(\cdot) - 1}{r(\cdot)}} \norm{u-u_n}_{r(\cdot)} \\
		& \leq 2 \sup_{n \in \N} \norm{\hat{N}_f(u_n)}_{\frac{r(\cdot) - 1}{r(\cdot)}} \norm{u-u_n}_{r(\cdot)}.
	\end{align*}
	From this along with the strong convergence in \eqref{a1} we see that
	\begin{align*}
		\lim_{n\to \infty} \into f(x,u_n,\nabla u_n) (u_n-u)\diff x&=0.
	\end{align*}
	Hence, if we pass to the limit in the weak formulation in \eqref{weak_solution} replacing $u$ by $u_n$ and $\ph$ by $u_n-u$, we obtain
	\begin{align}\label{a4}
		\limsup_{n\to\infty} \langle A(u_n),u_n-u\rangle_{\mathcal{H}}=\limsup_{n\to \infty} \langle \mathcal{A}(u_n),u_n-u\rangle_{\mathcal{H}}\leq 0.
	\end{align}
	Since $A$ fulfills the $\Ss$-property, see Theorem \ref{prop_properties_operator}, by using \eqref{assumption_pseudomonotone} and \eqref{a4} it follows that $u_n\to u$ in $\Wpzero{\mathcal{H}}$. Therefore, by the continuity of the operator $\mathcal{A}$, we conclude that $\mathcal{A}(u_n)\to \mathcal{A}(u)$ in $\Wpzero{\mathcal{H}}^*$. Hence $\mathcal{A}$ is pseudomonotone.

	Let us now prove that $\mathcal{A}$ is coercive, see Definition \ref{SplusPM}. Recall the representation of the first eigenvalue of the $p_-$-Laplacian, see \eqref{lambda_one}, replacing $r$ by $p_-$, we have the inequality
	\begin{align}\label{estimate_lambda}
		\|u\|^{p_-}_{p_-} \leq \lambda_{1,p_-}^{-1} \|\nabla u\|_{p_-}^{p_-} \quad\text{for all }u\in \Wpzero{p_-}.
	\end{align}
	Note that $\Wpzero{\mathcal{H}}\subseteq \Wpzero{p_-}$. Then, by applying  \eqref{estimate_lambda} and \eqref{estimate_f} along with Proposition \ref{proposition_modular_properties} one has
	\begin{align*}
		\langle \mathcal{A}(u),u\rangle
		& = \into \left(|\nabla u|^{p(x)-2}\nabla u+\mu(x)|\nabla u|^{q(x)-2}\nabla u \right)\cdot\nabla u \diff x-\into f(x,u,\nabla u)u \diff x\\
		&\geq \rho_{\mathcal{H}}(\nabla u) -b_1\into |\nabla u|^{p(x)}\diff x-b_2\into |u|^{p_-}\diff x-\|\omega\|_1\\
		&\geq \rho_{\mathcal{H}}(\nabla u) -b_1\into |\nabla u|^{p(x)}\diff x-b_2 \lambda_{1,p_-}^{-1}\into |\nabla u|^{p_-}\diff x-\|\omega\|_1\\
		& \geq \left(1-b_1-b_2\lambda_{1,p_-}^{-1}\right) \rho_{\mathcal{H}}(\nabla u)-b_2 \lambda_{1,p_-}^{-1}|\Omega|-\|\omega\|_1\\
		& \geq \left(1-b_1-b_2\lambda_{1,p_-}^{-1}\right) \min \l\{\|\nabla u\|_{\mathcal{H}}^{q_+},\|\nabla u\|_{\mathcal{H}}^{p_-}\r\} -b_2 \lambda_{1,p_-}^{-1}|\Omega|-\|\omega\|_1.
	\end{align*}
	Hence, since $1<p_-<q_+$ and $1-b_1-b_2\lambda_{1,p_-}^{-1}>0$ by assumption \eqref{condition_coeffizients},  we conclude that the operator $\mathcal{A}\colon\Wpzero{\mathcal{H}}\to \Wpzero{\mathcal{H}}^*$ is coercive.

	Therefore, we have proved that the operator $\mathcal{A}\colon\Wpzero{\mathcal{H}}\to\Wpzero{\mathcal{H}}^*$ is bounded, pseudomonotone and coercive. Applying Theorem \ref{theorem_pseudomonotone} we get a function $u\in \Wpzero{\mathcal{H}}$ such that $\mathcal{A}(u)=0$. By the definition 
of the operator $\mathcal{A}$ and the first condition in \textnormal{H(f)}, $u$ is a nontrivial weak solution of our original problem \eqref{problem}. This completes the proof.
\end{proof}

In the second part of this section we want to discuss the question under what conditions the solution obtained in Theorem \ref{theorem_existence} is unique. In order to give a positive answer we need to strengthen our conditions on the nonlinearity $f\colon\Omega\times\R\times\R^N\to\R$ in the following sense.
\begin{enumerate}[leftmargin=1cm]
	\item[\textnormal{(U1)}]
		There exists $c_1\geq 0$ such that
		\begin{align*}
			&(f(x,s,\xi)-f(x,t,\xi))(s-t) \leq c_1 |s-t|^2
		\end{align*}
		for a.\,a.\,$x\in\Omega$, for all $s,t \in\R$ and for all $\xi\in \R^N$.
	\item[\textnormal{(U2)}]
		There exists $\rho\in \Lp{r'(\cdot)}$, where $r \in C_+(\close)$ is such that $r(x)<p^*(x)$ for all $x\in \close$, and $c_2\geq 0$ such that the 
map $\xi\mapsto f(x,s,\xi)-\rho(x)$ is linear for a.\,a.\,$x\in\Omega$, for all $s\in \R$ and
		\begin{align*}
			|f(x,s,\xi)-\rho(x)|\leq c_2|\xi|
		\end{align*}
		for a.\,a.\,$x\in\Omega$, for all $s \in\R$ and for all $\xi\in \R^N$. Moreover,
		\begin{align}\label{condition_coeffizients2}
			c_1\lambda_{1,2}^{-1}+c_2\lambda_{1,2}^{-\frac{1}{2}}<1,
		\end{align}
		where $\lambda_{1,2}$ is the first eigenvalue of the Dirichlet eigenvalue problem for the Laplace differential operator, see \eqref{eigenvalue_problem}.
\end{enumerate}

\begin{example}
	The following function satisfies hypotheses \textnormal{H(f)}, \textnormal{(U1)} and \textnormal{(U2)}, where for simplicity we drop the $s$-dependence:
	\begin{align*}
		f(x,\xi)=\sum_{i=1}^N \beta_i \xi_i+\rho(x)\quad\text{for a.\,a.\,} 
x\in\Omega \text{ and for all }\xi \in\R^N,
	\end{align*}
	with $ p_- = 2 $ , $0 \neq \rho \in \Lp{2}$ and
	\begin{align*}
		\beta =(\beta_1, \ldots, \beta_N)\in \RN \text{ with } | \beta | ^2 
		< \min\left\{ 1 - \frac{1}{2}\lambda_{1,2}^{-1} \ , \ \lambda_{1,2}\right\}.
	\end{align*}
	Any $r \in C_+(\close)$ such that $p_- = 2 \leq r(x) < p^*(x)$ for all 
$x \in \close$ is admissible.
\end{example}

Our uniqueness result reads as follows.

\begin{theorem}\label{theorem_uniqueness}
	Let \textnormal{(H)}, \textnormal{H(f)}, \textnormal{(U1)}, and \textnormal{(U2)} be satisfied and let $ p(x) \equiv 2 $ for all $x \in \close$.
	Then, problem \eqref{problem} admits a unique weak solution.
\end{theorem}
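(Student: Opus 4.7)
The plan is to follow the classical monotonicity/Poincaré argument and exploit that with $p \equiv 2$ the $p$-Laplace part of the operator contributes a clean squared-gradient term. Suppose $u_1, u_2 \in \Wpzero{\mathcal{H}}$ are two weak solutions of \eqref{problem} and test the difference of their weak formulations against $\varphi = u_1 - u_2 \in \Wpzero{\mathcal{H}}$. Since $p(x) \equiv 2$, the left-hand side becomes
\begin{align*}
	\into |\nabla(u_1-u_2)|^2 \diff x + \into \mu(x)\l(|\nabla u_1|^{q(x)-2}\nabla u_1 - |\nabla u_2|^{q(x)-2}\nabla u_2\r)\cdot \nabla(u_1-u_2)\diff x.
\end{align*}
By the standard monotonicity inequality for $|\xi|^{q-2}\xi$ (which was already used in Theorem \ref{prop_properties_operator}\,\textnormal{(i)} to deduce strict monotonicity of $A$), the second integral is nonnegative, so the left-hand side is bounded below by $\|\nabla(u_1-u_2)\|_2^2$.

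Next I would estimate the right-hand side by decomposing
\begin{align*}
	f(x,u_1,\nabla u_1)-f(x,u_2,\nabla u_2)
	= \l[f(x,u_1,\nabla u_1)-f(x,u_2,\nabla u_1)\r]+\l[f(x,u_2,\nabla u_1)-f(x,u_2,\nabla u_2)\r].
\end{align*}
Multiplying the first bracket by $u_1-u_2$ and using \textnormal{(U1)} gives the bound $c_1|u_1-u_2|^2$. For the second bracket, \textnormal{(U2)} says that $\xi\mapsto f(x,u_2,\xi)-\rho(x)$ is linear with operator norm at most $c_2$, so by linearity
\begin{align*}
	\l|f(x,u_2,\nabla u_1)-f(x,u_2,\nabla u_2)\r| \leq c_2|\nabla(u_1-u_2)|
\end{align*}
for a.\,a.\,$x\in\Omega$. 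Applying the Cauchy–Schwarz inequality to the cross term and then the Poincar\'e-type inequality $\|v\|_2^2\le \lambda_{1,2}^{-1}\|\nabla v\|_2^2$ from \eqref{lambda_one} to both $\|u_1-u_2\|_2^2$ and to $\|u_1-u_2\|_2$ (using $W^{1,\mathcal{H}}_0(\Omega)\hookrightarrow W^{1,2}_0(\Omega)$ from Proposition \ref{proposition_embeddings}\,\textnormal{(i)}), the right-hand side is bounded by
\begin{align*}
	\l(c_1\lambda_{1,2}^{-1}+c_2\lambda_{1,2}^{-\frac{1}{2}}\r)\|\nabla(u_1-u_2)\|_2^2.
\end{align*}

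Combining the lower bound on the left with the upper bound on the right yields
\begin{align*}
	\|\nabla(u_1-u_2)\|_2^2 \leq \l(c_1\lambda_{1,2}^{-1}+c_2\lambda_{1,2}^{-\frac{1}{2}}\r)\|\nabla(u_1-u_2)\|_2^2,
\end{align*}
and the smallness assumption \eqref{condition_coeffizients2} forces $\|\nabla(u_1-u_2)\|_2=0$, hence $u_1=u_2$ by the Poincar\'e inequality. The existence of at least one weak solution is already guaranteed by Theorem \ref{theorem_existence} since \textnormal{H(f)} is in force. I do not expect any substantive obstacle here: the only slightly delicate point is to ensure that $\varphi=u_1-u_2$ is a legitimate test function (which it is, as $\Wpzero{\mathcal{H}}$ is a vector space) and to invoke linearity in \textnormal{(U2)} to rewrite the difference of the convection parts as a linear operator applied to $\nabla(u_1-u_2)$, which converts the pointwise bound $|f(x,s,\xi)-\rho(x)|\leq c_2|\xi|$ into the Lipschitz-type bound needed above.
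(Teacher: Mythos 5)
Your proposal is correct and follows essentially the same argument as the paper: test the difference of the two weak formulations with $\varphi=u_1-u_2$, drop the nonnegative $q(\cdot)$-part by monotonicity, split the right-hand side exactly as in \textnormal{(U1)}/\textnormal{(U2)}, and use the eigenvalue inequality for $\lambda_{1,2}$ together with \eqref{condition_coeffizients2} to conclude $u_1=u_2$, with existence supplied by Theorem \ref{theorem_existence}. Your direct use of the linearity in \textnormal{(U2)} to get the pointwise bound $c_2|\nabla(u_1-u_2)|$ is exactly what the paper does, only written there via the identity involving $\nabla\bigl(\tfrac{1}{2}(u-v)^2\bigr)$.
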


\begin{proof}
	Let $u,v\in \Wpzero{\mathcal{H}}$ be two weak solutions of \eqref{problem}. Testing the corresponding weak formulations with $\ph=u-v$ and subtracting these equations gives
	\begin{align}\label{uniqueness_1}
		\begin{split}
			& \into |\nabla (u-v)|^2\diff x + \into \mu(x) \left(|\nabla u|^{q(x)-2}\nabla u-|\nabla v|^{q(x)-2}\nabla u\right)\cdot \nabla (u-v)\diff x\\
			& = \into (f(x,u,\nabla u)-f(x,v,\nabla u))(u-v)\diff x+ \into (f(x,v,\nabla u)-f(x,v,\nabla v))(u-v)\diff x.
		\end{split}
	\end{align}
	The second term on the left-hand side of \eqref{uniqueness_1} is nonnegative, so we have the estimate
	\begin{align}\label{uniqueness_2}
		\begin{split}
			& \into |\nabla (u-v)|^2\diff x + \into \mu(x) \left(|\nabla u|^{q(x)-2}\nabla u-|\nabla v|^{q(x)-2}\nabla u\right)\cdot \nabla (u-v)\diff x\\
			& \geq \into |\nabla (u-v)|^2 \diff x.
		\end{split}
	\end{align}
	For the right-hand side of \eqref{uniqueness_1} we can use the assumptions \textnormal{(U1)}, \textnormal{(U2)} and H\"older's inequality which leads to
	\begin{align}\label{uniqueness_3}
		\begin{split}
			& \into (f(x,u,\nabla u)-f(x,v,\nabla u))(u-v)\diff x+ \into (f(x,v,\nabla u)-f(x,v,\nabla v))(u-v)\diff x\\
			& \leq  c_1\|u-v\|_2^2 +\into\left(f\left(x,v,\nabla \left(\frac{1}{2}(u-v)^2 \right)\right)-\rho(x) \right)\diff x\\
			&\leq c_1\|u-v\|_2^2+c_2\into |u-v| |\nabla (u-v)|\diff x\\
			& \leq\left(c_1\lambda_{1,2}^{-1}+c_2\lambda_{1,2}^{-\frac{1}{2}}\right)\|\nabla (u-v)\|_{2}^2.
		\end{split}
	\end{align}
	From \eqref{uniqueness_1}, \eqref{uniqueness_2} and \eqref{uniqueness_3} 
we see that
	\begin{align}\label{uniqueness_4}
		\begin{split}
			& \|\nabla (u-v)\|_{2}^2=\into |\nabla (u-v)|^2 \diff x \leq \left(c_1\lambda_{1,2}^{-1}+c_2\lambda_{1,2}^{-\frac{1}{2}}\right)\|\nabla (u-v)\|_{2}^2.
		\end{split}
	\end{align}
	Then, by \eqref{condition_coeffizients2}, from \eqref{uniqueness_4} it follows $u=v$.
\end{proof}

\section*{Acknowledgments}
The first author was funded by the Deutsche Forschungsgemeinschaft (DFG, German Research Foundation) under Germany's Excellence Strategy – 
The Berlin Mathematics Research Center MATH+ and the Berlin Mathematical School (BMS) (EXC-2046/1, project ID:\\ 390685689).

\end{document}